\newcommand{\leqnos}{\tagsleft@true\let\veqno\@@leqno}
\newcommand{\reqnos}{\tagsleft@false\let\veqno\@@eqno}
\newtheorem{thm}{}[section]
\newtheorem{theorem}[thm]{Theorem}
\newtheorem{corollary}[thm]{Corollary}
\newtheorem{lemma}[thm]{Lemma}
\newtheorem{proposition}[thm]{Proposition}
\theoremstyle{definition}
\newtheorem{definition}[thm]{Definition}
\theoremstyle{remark}
\newtheorem{remark}[thm]{Remark}
\newtheorem{example}[thm]{Example}
\newtheorem{clm}{}
\theoremstyle{remark}
\newtheorem{claim}[clm]{Claim}
\numberwithin{equation}{section}
\newcommand{\DD}{\ensuremath{\mathcal{D}}}
\newcommand{\KK}{\ensuremath{\mathcal{K}}}
\newcommand{\CC}{\ensuremath{\mathbb{C}}}
\newcommand{\FF}{\ensuremath{\mathbb{F}}}
\newcommand{\RR}{\ensuremath{\mathbb{R}}}
\newcommand{\NN}{\ensuremath{\mathbb{N}}}
\newcommand{\xx}{\ensuremath{\bm{x}}}
\newcommand{\yy}{\ensuremath{\bm{y}}}
\newcommand{\ee}{\ensuremath{\bm{e}}}
\newcommand{\zz}{\ensuremath{\bm{z}}}
\newcommand{\XX}{\ensuremath{\mathbb{X}}}
\newcommand{\XB}{\ensuremath{\mathcal{X}}}
\newcommand{\YY}{\ensuremath{\mathbb{Y}}}
\newcommand{\YB}{\ensuremath{\mathcal{Y}}}
\newcommand{\ZB}{\ensuremath{\mathcal{Z}}}
\newcommand{\EE}{\ensuremath{\mathcal{E}}}
\newcommand{\Ind}{\ensuremath{\mathbbm{1}}}
\newcommand{\plus}{\ensuremath{\bm{o}}}
\newcommand{\minus}{\ensuremath{\bm{e}}}
\newcommand{\gc}{\ensuremath{\overline{\bm{g}}}}
\newcommand{\g}{\ensuremath{\bm{g}}}
\newcommand{\h}{\ensuremath{\bm{h}}}
\newcommand{\UU}{\ensuremath{\mathcal{U}}}
\newcommand{\BB}{\ensuremath{\mathcal{B}}}
\newcommand{\co}{\ensuremath{\mathtt{c}}}
\newcommand{\uu}{\ensuremath{\bm{u}}}
\newcommand{\ww}{\ensuremath{\bm{w}}}
\newcommand{\n}{\ensuremath{\mathbf{n}}}
\newcommand{\Co}{\ensuremath{\mathbf{C}}}
\newcommand{\unc}{\ensuremath{\bm{k}}}
\newcommand{\BMO}{\ensuremath{\mathrm{BMO}}}
\newcommand{\udf}{\ensuremath{\bm{\varphi_u}}}
\newcommand{\ldf}{\ensuremath{\bm{\varphi_l}}}
\newcommand{\pb}{\ensuremath{\varphi_u}}
\DeclareMathOperator{\sgn}{sign}
\DeclareMathOperator{\spn}{span}
\DeclareMathOperator{\supp}{supp}
\subjclass[2010]{41A65, 41A46, 41A17, 46B15, 46B45.}
\keywords{Non-linear approximation, greedy bases, weak greedy algorithm, quasi-greedy basis.}
\begin{document}

\title[Bidemocratic bases]{Bidemocratic bases and their connections with other greedy-type bases}

\author[Albiac]{Fernando Albiac}
\address{Department of Mathematics, Statistics, and Computer Sciencies--InaMat2\\
Universidad P\'ublica de Navarra\\
Campus de Arrosad\'{i}a\\
Pamplona\\
31006 Spain}
\email{fernando.albiac@unavarra.es}

\author[Ansorena]{Jos\'e L. Ansorena}
\address{Department of Mathematics and Computer Sciences\\
Universidad de La Rioja\\
Logro\~no\\
26004 Spain}
\email{joseluis.ansorena@unirioja.es}

\author[Berasategui]{Miguel Berasategui}
\address{Miguel Berasategui\\
IMAS - UBA - CONICET - Pab I, Facultad de Ciencias Exactas y Naturales\\
Universidad de Buenos Aires\\
(1428), Buenos Aires, Argentina}
\email{mberasategui@dm.uba.ar}

\author[Bern\'a]{Pablo M. Bern\'a}
\address{Pablo M. Bern\'a\\
Departamento de Matem\'atica Aplicada y Estad\'istica, Facultad de Ciencias Econ\'omicas y Empresariales, Universidad San Pablo-CEU, CEU Universities\\
Madrid, 28003 Spain.}
\email{pablo.bernalarrosa@ceu.es}

\author[Lassalle]{Silvia Lassalle}
\address{Silvia Lassalle\\
Departamento de Matem\'atica\\
Universidad de San Andr\'es, Vito Duma 284\\
(1644) Victoria, Buenos Aires, Argentina and\\
IMAS - CONICET}
\email{slassalle@udesa.edu.ar}

\begin{abstract}
In nonlinear greedy approximation theory, bidemocratic bases have traditionally played the role of dualizing democratic, greedy, quasi-greedy, or almost greedy bases. In this article we shift the viewpoint and study them for their own sake, just as we would with any other kind of greedy-type bases. In particular we show that bidemocratic bases need not be quasi-greedy, despite the fact that they retain a strong unconditionality flavor which brings them very close to being quasi-greedy. Our constructive approach gives that for each $1<p<\infty$ the space $\ell_p$ has a bidemocratic basis which is not quasi-greedy. We also present a novel method for constructing conditional quasi-greedy bases which are bidemocratic, and provide a characterization of bidemocratic bases in terms of the new concepts of truncation
quasi-greediness and partially democratic bases.
\end{abstract}

\subjclass[2010]{41A65, 41A46, 46B15, 46B45}

\keywords{Nonlinear approximation, Thresholding greedy algorithm, quasi-greedy basis, democracy}

\thanks{F. Albiac acknowledges the support of the Spanish Ministry for Science and Innovation under Grant PID2019-107701GB-I00 for \emph{Operators, lattices, and structure of Banach spaces}. F. Albiac and J.~L. Ansorena acknowledge the support of the Spanish Ministry for Science, Innovation, and Universities under Grant PGC2018-095366-B-I00 for \emph{An\'alisis Vectorial, Multilineal y Aproximaci\'on.} M. Berasategui and S. Lassalle were supported by ANPCyT PICT-2018-04104. P.~M. Bern\'a by Grants PID2019-105599GB-I00 (Agencia Estatal de Investigaci\'on, Spain) and 20906/PI/18 from Fundaci\'on S\'eneca (Regi\'on de Murcia, Spain). S. Lassalle was also supported in part by CONICET PIP 0483 and PAI UdeSA 2020-2021. F. Albiac, J.~L. Ansorena and P.~M. Bern\'a would like to thank the Erwin Schr\"odinger International Institute for Mathematics and Physics, Vienna, for support and hospitality during the programme \emph{Applied Functional Analysis and High-Dimensional Approximation}, held in the Spring of 2021, where work on this paper was undertaken.}

\maketitle

\section{Introduction and background}\noindent
Let $\XX$ be an infinite-dimensional separable Banach space (or, more generally, a quasi-Banach space) over the real or complex field $\FF$. Throughout this paper, unless otherwise stated, by a \emph{basis} of $\XX$ we mean a norm-bounded sequence $\XB=(\xx_n)_{n=1}^\infty$ that generates the entire space, in the sense that
\[
\overline{\spn}(\xx_n \colon n\in\NN)=\XX,
\]
and for which there is a (unique) norm-bounded sequence $\XB^{\ast}=(\xx_{n}^{\ast})_{n=1}^\infty$ in the dual space $\XX^{\ast}$ such that $(\xx_{n}, \xx_{n}^{\ast})_{n=1}^{\infty}$ is a biorthogonal system.
We will refer to the basic sequence $\XB^{\ast}$ as to the \emph{dual basis} of $\XB$.

We recall that the basis $\XB=(\xx_n)_{n=1}^\infty$ is called \emph{democratic} if there is a constant $\Delta$ such that
\[
\left\Vert \sum_{k\in A}\xx_k \right\Vert\le \Delta \left\Vert \sum_{k\in B}\xx_k \right\Vert
\]
whenever $A$ and $B$ are finite subsets of $\NN$ with $|A|\le |B|$. The \emph{fundamental function} $\varphi\colon\NN\to[0,\infty)$ of $\XB$ is then defined by
\[
\varphi(m)=\sup\limits_{|A|\le m}\left\Vert \sum_{k\in A}\xx_k \right\Vert,\quad m\in\NN,
\]
while the \emph{dual fundamental function} of $\XB$ is just the fundamental function of its dual basis, i.e.,
\[
\varphi^{\ast}(m)=\sup\limits_{|A|\le m}\left\Vert \sum_{k\in A}\xx_k^{\ast} \right\Vert, \quad m\in \NN.
\]
In general it is not true that if a basis $\XB=(\xx_n)_{n=1}^\infty$ is democratic, then its dual basis $\XB^{\ast}$ is democratic as well. For instance, the $L_1$-normalized Haar system is an unconditional democratic basis of the dyadic Hardy space $H_1$ \cites{Woj1982,Woj2000}, but the $L_\infty$-normalized Haar system is not democratic in the dyadic $\BMO$-space \cite{Oswald2001}. In order to understand better how certain greedy-like properties dualize, Dilworth et al.\ introduced in \cite{DKKT2003} a strengthen form of democracy. Notice that the elementary computation
\[
m=\left(\sum_{k\in A} \xx_k^{\ast}\right)\left(\sum_{k\in A} \xx_k\right)
\le \left\Vert\sum_{k\in A} \xx_k^{\ast}\right\Vert \left\Vert\sum_{k\in A} \xx_k\right\Vert
\;\text{if}\;
|A|=m,
\]
yields the estimate
\[
m\le \varphi(m)\,\varphi^{\ast}(m),\quad m\in\NN.
\]
A basis $\XB=(\xx_n)_{n=1}^\infty$ is then said to be \emph{bidemocratic} if the reverse inequality is fulfilled up to a constant, i.e., $\XB$ is bidemocratic with constant $\Delta_{b}$ ($\Delta_{b}$-bidemocratic for short)
if
\[
\varphi(m)\, \varphi^{\ast}(m)\le \Delta_{b}\, m, \quad m\in \NN.
\]
Amongst other relevant results relative to this kind of bases in Banach spaces, Dilworth et al.\ showed in \cite{DKKT2003} that being quasi-greedy passes to dual bases under the assumption of bidemocracy (see \cite{DKKT2003}*{Theorem 5.4}). Since the dual basis of a bidemocratic basis is democratic, it follows that the corresponding result also holds for almost greedy and greedy bases. That is, if a bidemocratic basis is almost greedy (respectively, greedy), then so is its dual basis.

Despite the instrumental role played by bidemocratic bases as a key that permits dualizing some greedy-type properties, it is our contention in this paper that these bases are of interest by themselves and that they deserve to be studied as any other kind of greedy-like bases. For instance, the unconditionality constants of bidemocratic bases have been estimated (see Theorem~\ref{thm:BidCond} below), which sheds some information onto the performance of the greedy algorithm when it is implemented specifically for these bases.

To undertake our task we must first place bidemocratic bases in the map by relating them with other types of bases that are relevant in the theory. In this respect the most important open question is whether bidemocratic bases are quasi-greedy. This problem is motivated by recent results that show that bidemocratic bases have uniform boundedness properties of certain (nonlinear) truncation operators that make them very close to quasi-greedy bases (see \cite{AABW2021}*{Proposition 5.7}). In our language, bidemocratic bases are truncation quasi-greedy. In Section~\ref{sect:BDNonQG} we will solve this question in the negative by proving that bidemocracy is not in general strong enough to ensure quasi-greediness and show that for $1<p<\infty$ the space $\ell_p$ has a bidemocratic basis which is not quasi-greedy.

Before that, we will look for sufficient conditions for a basis to be bidemocratic. Here one must take into account that if $\XB$ is bidemocratic then both $\XB$ and $\XB^{\ast}$ are democratic but the converse fails. The only positive result we find in the literature in the reverse direction is the aforementioned Theorem 5.4 from \cite{DKKT2003}, which tells us that if $\XB$ and $\XB^{\ast}$ are quasi-greedy and democratic then $\XB$ is bidemocratic. In Section~\ref{sect:truncation quasi-greedy} we extend this result by relaxing the conditions on the bases $\XB$ and $\XB^{\ast}$ while still attaining the bidemocracy of $\XB$.

Turning to quasi-greedy bases, it is natural and consistent with our discussion in this paper, to further the study of conditional quasi-greedy bases by looking for conditional bidemocratic quasi-greedy bases, i.e., conditional almost greedy bases whose dual bases are also almost greedy. The previous methods for building conditional almost greedy bases in Banach spaces yield either bases whose fundamental function coincides with the fundamental function of the canonical basis of $\ell_1$, or bases whose fundamental function increases steadily enough (formally, bases that have the upper regularity property and the lower regularity property). In the former case, the bases are not bidemocratic unless they are equivalent to the canonical $\ell_1$-basis; in the latter, the bases are always bidemocratic by \cite{DKKT2003}*{Proposition 4.4}. The existence of conditional bidemocratic quasi-greedy bases which do not have the upper regularity property seems to be an unexplored area. In Section~\ref{sect:NM} we contribute to this topic by developing a new method for building bidemocratic, conditional, quasi-greedy bases with arbitrary fundamental functions.

Throughout this paper we will use standard notation and terminology from Banach spaces and greedy approximation theory, as can be found, e.g., in \cite{AlbiacKalton2016}. We also refer the reader to the recent article \cite{AABW2021} for other more especialized notation. We next single out however the most heavily used terminology.

For broader applicability, whenever it is possible we will establish our results in the setting of quasi-Banach spaces. Let us recall that a \emph{quasi-Banach space} is a vector space $\XX$ over the real or complex field $\FF$ equipped with a \emph{quasi-norm}, i.e., a map $\|\cdot\|\colon \XX\to [0,\infty)$ that satisfies all the usual properties of a norm with the exception of the triangle law, which is replaced with the condition
\begin{equation}\label{defquasinorm}
\|f+g\|\leq \kappa( \| f\| + \|g\|),\quad f,g\in \XX,
\end{equation}
for some $\kappa\ge 1$ independent of $f$ and $g$, and moreover $(\XX,\|\cdot\|)$ is complete.
The \emph{modulus of concavity} of the quasi-norm is the smallest constant $\kappa\ge 1$ in \eqref{defquasinorm}. Given $0<p\le 1$, a \emph{$p$-Banach space} will be a quasi-Banach space whose quasi-norm is $p$-subadditive, i.e.,
\[
\Vert f+g\Vert^p \le \Vert f\Vert^p +\Vert g \Vert^p, \quad f,g\in\XX.
\]

Some authors have studied the Thresholding Greedy Algorithm, or TGA for short, for more demanding types of bases that we will bring into play on occasion. A sequence $\XB=(\xx_n)_{n=1}^\infty$ of $\XX$ is said to be a \emph{Schauder basis} if for every $f\in\XX$ there is a unique sequence $(a_n)_{n=1}^\infty$ in $\FF$ such that $f= \sum_{n=1}^{\infty} a_n\, \xx_{n}$, where the convergence of the series is understood in the topology induced by the quasi-norm. If $\XB$ is a Schauder basis we define the biorthogonal functionals associated to $\XB$ by $\xx_k^*(f)=a_k$ for all $f=\sum_{n=1}^{\infty} a_n \, \xx_{n}\in\XX$ and $k\in\NN$. The \emph{partial-sum projections} $S_{m}\colon \XX\to \XX$ with respect to the Schauder basis $\XB$, given by
\[
f\mapsto S_{m}(f)= \sum_{n=1}^{m} \xx_n^*(f)\, \xx_{n}, \quad f\in\XX,\, m\in\NN,
\]
are uniformly bounded, whence we infer that $\sup_n \Vert \xx_n\Vert \, \Vert \xx_n^*\Vert<\infty$. Hence, if a Schauder basis $\XB$ is semi-normalized, i.e.,
\[
0<\inf_n \Vert \xx_n\Vert\le \sup_n \Vert \xx_n\Vert<\infty,
\]
then $(\xx_n^*)_{n=1}^\infty$ is norm-bounded and so $\XB$ is a basis in the sense of this paper.
If $\XB=(\xx_n)_{n=1}^\infty$ is a Schauder basis, then the \emph{coefficient transform}
\[
f\mapsto (\xx_n^{\ast}(f))_{n=1}^\infty, \quad f\in\XX,
\]
is one-to-one, that is, the basis $\XB$ is \emph{total}. In the case when $\Vert S_m\Vert \le 1$ for all $m\in\NN$ the Schauder basis $\XB$ is said to be \emph{monotone}.

Given $A\subseteq \NN$, we will use $\EE_A$ to denote the set consisting of all families $(\varepsilon_n)_{n\in A}$ in $\FF$ with $|\varepsilon_n|=1$ for all $n\in A$. Given a basis $\XB=(\xx_n)_{n=1}^\infty$ of $\XX$, a finite set $A\subseteq\NN$ and $\varepsilon=(\varepsilon_n)_{n\in A}\in\EE_A$ it is by now customary to use
\[
\textstyle
\Ind_{\varepsilon,A}[\XB,\XX]=\sum_{n\in A} \varepsilon_n\, \xx_n
\;(\text{resp.,}\;
\Ind^{\ast}_{\varepsilon,A}[\XB,\XX]=\sum_{n\in A} \varepsilon_n \, \xx_n^{\ast}
).
\]
If the basis and the space are clear from context we simply put $\Ind_{\varepsilon,A}$ (resp., $\Ind^{\ast}_{\varepsilon,A}$), and if $\varepsilon_n=1$ for all $n\in A$ we put $\Ind_A$ (resp., $\Ind^{\ast}_{A}$). Associated with the fundamental function $\varphi$ of the basis are the \emph{upper super-democracy function} of $\XB$,
\[
\udf(m)=\udf[\XB,\XX](m)=\sup\left\lbrace \left\Vert \Ind_{\varepsilon,A} \right\Vert \colon |A|\le m,\, \varepsilon\in\EE_A \right\rbrace, \quad m\in\NN.
\]
and the \emph{lower super-democracy function} of $\XB$,
\[
\ldf(m)=\ldf[\XB,\XX](m)=\inf\left\lbrace \left\Vert \Ind_{\varepsilon,A} \right\Vert \colon |A|\ge m,\, \varepsilon\in\EE_A \right\rbrace, \quad m\in\NN.
\]
The growth of $\udf$ is of the same order as $\varphi$ (see \cite{AABW2021}*{inequality (8.3)}), and so the
basis $\XB$ is bidemocratic if and only if
\begin{equation*}
\sup_{m\in\NN} \frac{1}{m} \udf[\XB,\XX](m) \udf[\XB^{\ast},\XX^{\ast}](m) <\infty
\end{equation*}
(see \cite{AABW2021}*{Lemma 5.5}).

The symbol $\alpha_j\lesssim \beta_j$ for $j\in J$ means that there is a positive constant $C$ such that the families of nonnegative real numbers $(\alpha_j)_{j\in J}$ and $(\beta_j)_{j\in J}$ are related by the inequality $\alpha_j\le C\beta_j$ for all $j\in J$. If $\alpha_j\lesssim \beta_j$ and $\beta_j\lesssim \alpha_j$ for $j\in J$ we say $(\alpha_j)_{j\in J}$ are $(\beta_j)_{j\in J}$ are equivalent, and write $\alpha_j\approx \beta_j$ for $j\in J$.

We finally recall that two bases $(\xx_n)_{n=1}^\infty$ and $(\yy_n)_{n=1}^\infty$ of quasi-Banach spaces $\XX$ and $\YY$ are said to be equivalent if there is an isomorphism $T$ from $\XX$ onto $\YY$ with $T(\xx_n)=\yy_n$ for all $n\in\NN$.

\section{From truncation quasi-greedy to bidemocratic bases }\label{sect:truncation quasi-greedy}\noindent
Let $\XB=(\xx_{n})_{n=1}^{\infty}$ be a semi-normalized basis for a quasi-Banach space $\XX$
with dual basis $(\xx_{n}^*)_{n=1}^{\infty}$.
For each $f\in \XX$ and each $B\subseteq\NN$ finite, put
\[
\UU(f,B) = \min_{n\in B} |\xx_n^{\ast}(f)| \sum_{n\in B} \sgn (\xx_n^{\ast}(f)) \, \xx_n.
\]
Given $m\in\NN\cup\{0\}$, the $m$\emph{th-restricted truncation operator} $\UU_m\colon \XX \to \XX$ is defined as
\[
\UU_m(f)=\UU(f,A_m(f)), \quad f\in\XX,
\]
where $A=A_m(f)\subseteq\NN$ is a \emph{greedy set} of $f$ of cardinality $m$, i.e., $|\xx_{n}^{\ast}(f)|\ge| \xx_{k}^{\ast}(f)|$ whenever $n\in A$ and $k\not\in A$. The set $A$ depends on $f$ and $m$, and may not be unique; if this happens we take any such set. We put
\begin{equation*}
\Lambda_u=\Lambda_u[\XB,\XX]=\sup\{ \Vert \UU(f,B)\Vert \colon B
\;\text{greedy set of}\;
f, \, \Vert f\Vert \le 1\}.
\end{equation*}
If the quasi-norm is continuous, applying a perturbation technique yields
\[
\Lambda_u=\sup_m \Vert \UU_m\Vert.
\]
Thus, the basis $\XB$ is said to be \emph{truncation quasi-greedy} if $(\UU_m)_{m=1}^\infty$ is a uniformly bounded family of (nonlinear) operators, or equivalently, if and only if $\Lambda_u<\infty$. In this case we will refer to $\Lambda_u$ as the \emph{truncation quasi-greedy constant} of the basis.

Quasi-greedy bases are truncation quasi-greedy (see \cite{DKKT2003}*{Lemma 2.2} and \cite{AABW2021}*{Theorem 4.13}), but the converse does not hold in general. The first case in point appeared in the proof of \cite{BBG2017}*{Proposition 5.6}, where the authors constructed a basis that dominates the unit vector system of $\ell_{1,\infty}$, hence it is truncation quasi-greedy by \cite{AABW2021}*{Proposition 9.4}, but it is not quasi-greedy. In spite of that, truncation quasi-greedy bases still enjoy most of the nice unconditionality-like properties of quasi-greedy bases. For instance, they are quasi-greedy for large coefficients (QGLC for short), suppression unconditional for constant coefficients (SUCC for short), and lattice partially unconditional (LPU for short). See \cite{AABW2021}*{Sections 3 and 4} for the precise definitions and the proofs of these relations.

In turn, if $\XB$ is bidemocratic then both $\XB$ and its dual basis $\XB^{\ast}$ are truncation quasi-greedy (\cite{AABW2021}*{Proposition 5.7}). In this section we study the converse implication, i.e., we want to know which additional conditions make a truncation quasi-greedy basis bidemocratic. A good starting point is the following result, which uses the upper regularity property (URP for short) and which is valid only for Banach spaces. Following \cite{DKKT2003} we shall say that a basis has the URP if there is an integer $b\ge 3$ so that its fundamental function $\varphi$ satisfies
\begin{equation}\label{URPdef}
2\varphi(b m)\le {b} \varphi(m),\quad m\in\NN.
\end{equation}

\begin{theorem}[see \cite{AABW2021}*{Lemma 9.8 and Proposition 10.17(iii)}]
Let $\XB$ be a basis of a Banach space $\XX$. Suppose that $\XB$ is democratic, truncation quasi-greedy, and has the URP. Then $\XB$ is bidemocratic (and so $\XB^{\ast}$ is truncation quasi-greedy too).
\end{theorem}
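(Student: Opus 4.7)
The plan is to prove the equivalent formulation $\udf[\XB,\XX](m)\,\udf[\XB^{\ast},\XX^{\ast}](m)\lesssim m$ for all $m\in\NN$, using the characterization of bidemocracy in terms of super-democracy functions recalled in the introduction. Since $\udf[\XB,\XX]\approx \varphi$ in general, the task reduces to showing
\[
\udf[\XB^{\ast},\XX^{\ast}](m)\lesssim \frac{m}{\varphi(m)}, \quad m\in\NN.
\]

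The first step is to extract, from the truncation quasi-greedy hypothesis, the pointwise coefficient bound $a_m^{\ast}(f)\,\varphi(m)\lesssim \Vert f\Vert$ for every $f\in\XX$ and every $m$, where $a_m^{\ast}(f)$ denotes the $m$th largest entry of $(|\xx_n^{\ast}(f)|)_{n}$. The operator $\UU_m(f)$ equals $a_m^{\ast}(f)$ times a signed indicator sum over a greedy set of cardinality $m$, and since the SUCC property (implied by truncation quasi-greediness) combined with democracy forces $\ldf[\XB,\XX](m)\approx \varphi(m)$, the norm of that signed sum is bounded below by a constant multiple of $\varphi(m)$; the estimate $\Vert\UU_m(f)\Vert\le\Lambda_u\Vert f\Vert$ then yields the coefficient bound.

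The second step is to convert the URP into a summation inequality. Iterating $2\varphi(bm)\le b\varphi(m)$ gives $\varphi(m)/\varphi(n)\lesssim (m/n)^{\alpha}$ for some $\alpha<1$ and all $n\le m$, and summing the resulting near-geometric series yields
\[
\sum_{n=1}^m \frac{1}{\varphi(n)} \lesssim \frac{m}{\varphi(m)}, \quad m\in\NN.
\]

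Combining these via Hahn--Banach duality concludes the argument. For each $A\subseteq\NN$ with $|A|=m$ and each $\varepsilon\in\EE_A$, duality produces some $f\in\XX$ with $\Vert f\Vert\le 1$ for which $\Vert\Ind^{\ast}_{\varepsilon,A}\Vert$ is approximated by $|\Ind^{\ast}_{\varepsilon,A}(f)|$; then
\[
\Vert\Ind^{\ast}_{\varepsilon,A}\Vert \lesssim \sum_{n\in A}|\xx_n^{\ast}(f)| \le \sum_{n=1}^m a_n^{\ast}(f) \lesssim \sum_{n=1}^m \frac{1}{\varphi(n)} \lesssim \frac{m}{\varphi(m)},
\]
and taking the supremum over $A$ and $\varepsilon$ finishes the proof. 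The main obstacle I anticipate is the first step, since the lower bound $\ldf[\XB,\XX]\gtrsim\varphi$ is not automatic for merely democratic bases and genuinely relies on the sign-insensitivity built into the truncation quasi-greedy hypothesis; the URP enters only in the purely arithmetic second step, and duality takes care of the rest.
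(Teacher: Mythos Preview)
Your proof is correct. The paper does not actually supply its own proof of this theorem; it is stated with a citation to \cite{AABW2021}*{Lemma 9.8 and Proposition 10.17(iii)}, and your argument reconstructs precisely the route those two cited results encode: Proposition 10.17(iii) is the embedding $a_m^{\ast}(f)\,\varphi(m)\lesssim\Vert f\Vert$ obtained from truncation quasi-greediness plus democracy (your Step 1), and Lemma 9.8 is the regularity statement that URP for $\varphi$ is equivalent to regularity of the dual weight, giving $\sum_{n\le m}1/\varphi(n)\lesssim m/\varphi(m)$ (your Step 2). The duality step is then immediate, as you wrote.

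Two minor comments. First, in Step 1 you cite SUCC to get $\ldf\gtrsim\varphi$; the cleaner route, already available in this paper, is to apply Lemma~\ref{lem:truncation quasi-greedyQU} directly with $f=\Ind_{\varepsilon,A}$ to obtain $\Vert\Ind_A\Vert\lesssim\Vert\Ind_{\varepsilon,A}\Vert$, and then invoke democracy. Second, your ``Hahn--Banach duality'' in Step 3 is really just the definition of the dual norm: since $\Ind^{\ast}_{\varepsilon,A}\in\XX^{\ast}$, its norm is attained (up to $\epsilon$) on some unit vector $f\in\XX$, no extension theorem needed. Neither point affects the validity of the argument.
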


Can we do any better? Dilworth et al.\ characterized bidemocratic bases as those quasi-greedy bases which fulfill an additional condition, weaker than democracy, which they named conservative (\cite{DKKT2003}*{Theorem 5.4}). Recall that a basis is said to be \emph{conservative} if there is a constant $C$ such that $\Vert \Ind_{A} \Vert \le C \Vert \Ind_{B} \Vert$ whenever $|A|\le |B|$ and $\max (A) \le \min (B)$. Our objection to this concept is that it is not preserved under rearrangements of the basis. Thus, since the greedy algorithm is ``reordering invariant'' (i.e., if $\pi$ is a permutation of $\NN$, the greedy algorighm with respect to the bases $(\xx_n)_{n=1}^\infty$ and $(\xx_{\pi(n)})_{n=1}^\infty$ is the same) when working with conservative bases we are bringing an outer element into the theory. This is the reason why we establish our characterization of bidemocratic bases below in terms of a reordering invariant new class of bases which is more general than the class of conservative bases and which we next define.

\begin{definition}
We say that a basis is \emph{partially democratic} if there is a constant $C$ such that for each $D\subseteq\NN$ finite there is $D\subseteq E\subseteq\NN$ finite such that $\Vert \Ind_A\Vert \le C \Vert \Ind_B\Vert$ whenever $A\subseteq D$ and $B\subseteq \NN\setminus E$ satisfy $|A|\le |B|$.
\end{definition}

The following lemma is well-known.
\begin{lemma}[See \cite{AABW2021}*{Proposition 4.16} or \cite{AAW2021b}*{Lemma 5.2}]\label{lem:truncation quasi-greedyQU}
Suppose that $\XB=(\xx_n)_{n=1}^\infty$ is a truncation quasi-greedy basis of a quasi-Banach space $\XX$. Then there is a constant $C$ depending on the modulus of concavity of $\XX$ and the truncation quasi-greedy constant of $\XB$ such that
\[
\left\Vert \sum_{n\in A} a_n\, \xx_n\right\Vert \le C \Vert f\Vert
\]
for all $f\in \XX$, and $A\subseteq\NN$ such that $\max_{n\in A}|a_n|\le \min_{n\in A} |\xx_n^{\ast}(f)|$.
\end{lemma}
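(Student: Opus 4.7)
The plan is to bound $\bigl\|\sum_{n\in A}a_n\,\xx_n\bigr\|$ by $\|f\|$ through three successive reductions: enlarge $A$ to a greedy set of $f$ and invoke truncation quasi-greediness; restrict back to $A$ using the SUCC property; and finally interpolate from unimodular to general bounded coefficients.

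First I would dispose of the trivial case $\alpha:=\min_{n\in A}|\xx_n^{\ast}(f)|=0$, in which the hypothesis forces $a_n\equiv 0$ on $A$. Assuming $\alpha>0$, the coefficients $\xx_n^{\ast}(f)$ tend to zero (a standard consequence of truncation quasi-greediness for a semi-normalized basis), so the set
\[
A' := A\cup\{n\notin A : |\xx_n^{\ast}(f)|>\alpha\}
\]
is finite and is a greedy set of $f$ whose minimum coefficient modulus equals $\alpha$. By the very definition of $\Lambda_u$,
\[
\Bigl\|\alpha\sum_{n\in A'}\sgn(\xx_n^{\ast}(f))\,\xx_n\Bigr\| = \|\UU(f,A')\| \le \Lambda_u\,\|f\|.
\]
Truncation quasi-greediness is known to imply the SUCC property (with a constant depending only on $\kappa$ and $\Lambda_u$), and SUCC in turn gives sign-invariance of constant-coefficient norms by splitting indices into agreement/disagreement sets and suppressing. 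Applying SUCC to drop the indices in $A'\setminus A$ and then sign-invariance to replace $\sgn(\xx_n^{\ast}(f))$ by an arbitrary unimodular sequence yields
\[
\Bigl\|\alpha\sum_{n\in A}\varepsilon_n\,\xx_n\Bigr\| \le K_1\,\|f\|
\]
for every $(\varepsilon_n)_{n\in A}\in\EE_A$ and some constant $K_1=K_1(\kappa,\Lambda_u)$.

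It remains to pass from unimodular coefficients to the prescribed $a_n$ with $|a_n|\le\alpha$. In a Banach space this is a textbook convex-hull argument: $(a_n/\alpha)$ lies in the convex hull of unimodular vectors, so the triangle inequality gives $\bigl\|\sum_{n\in A}a_n\,\xx_n\bigr\|\le\alpha\,\sup_{\varepsilon\in\EE_A}\bigl\|\sum_{n\in A}\varepsilon_n\,\xx_n\bigr\|\le K_1\|f\|$. In the quasi-Banach setting this argument fails, because a convex combination can inflate the quasi-norm by a factor that grows with $|A|$; the substitute I would use is to pass to an equivalent $p$-subadditive quasi-norm via Aoki--Rolewicz, split $A=\bigsqcup_{j\ge 0}A_j$ dyadically with $A_j=\{n\in A:\alpha\,2^{-j-1}<|a_n|\le\alpha\,2^{-j}\}$, estimate each block through the constant-coefficient bound (available because $A_j\subseteq A$), and sum in the $p$-quasi-norm with a convergent geometric factor $\sum_j 2^{-jp}$. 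This quasi-Banach interpolation step is the main technical hurdle; the first two steps are immediate consequences of the hypothesis and of the known implication truncation quasi-greedy $\Rightarrow$ SUCC.
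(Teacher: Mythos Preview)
The paper does not prove this lemma; it is quoted from \cite{AABW2021}*{Proposition~4.16} and \cite{AAW2021b}*{Lemma~5.2}, so there is no in-paper argument to compare against. Your three-step strategy (enlarge $A$ to a greedy set, invoke SUCC to restrict back and change signs, then interpolate to arbitrary bounded coefficients) is exactly the natural route and is essentially correct.

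Two small points are worth tightening. First, the fact that $\xx_n^{\ast}(f)\to 0$ does not require truncation quasi-greediness: it holds for \emph{any} basis in the paper's sense, since $\spn(\xx_n)$ is dense and the $\xx_n^{\ast}$ are uniformly bounded (approximate $f$ by a finitely supported $g$ and note $|\xx_n^{\ast}(f)|=|\xx_n^{\ast}(f-g)|\le\sup_k\|\xx_k^{\ast}\|\,\|f-g\|$ for $n$ beyond the support of $g$). Second, your dyadic level-set decomposition $A_j=\{n\in A:\alpha 2^{-j-1}<|a_n|\le\alpha 2^{-j}\}$ leaves the coefficients on each $A_j$ varying by a factor of two, so the ``constant-coefficient bound'' does not apply to $\sum_{n\in A_j}a_n\,\xx_n$ directly. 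The clean fix is to use the binary expansion $|a_n|/\alpha=\sum_{j\ge 1}b_{n,j}2^{-j}$ with $b_{n,j}\in\{0,1\}$, which yields
\[
\sum_{n\in A}a_n\,\xx_n=\sum_{j\ge 1}2^{-j}\,\alpha\,\Ind_{\varepsilon,B_j},\qquad B_j=\{n\in A:b_{n,j}=1\}\subseteq A,
\]
and now each piece is genuinely constant-coefficient. Since SUCC gives $\|\alpha\,\Ind_{\varepsilon,B_j}\|\le K_1\|f\|$ uniformly in $j$, summing in an equivalent $p$-norm produces the geometric factor $(\sum_j 2^{-jp})^{1/p}=(2^p-1)^{-1/p}$ and finishes the proof. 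This is precisely the mechanism behind Lemma~\ref{constantupperdemfunct} in the paper.
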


\begin{theorem}\label{thm:PDtruncation quasi-greedy}
Let $\XB$ be a basis of a Banach space $\XX$. Suppose that both $\XB$ and $\XB^{\ast}$ are truncation quasi-greedy and partially democratic. Then $\XB$ is bidemocratic.
\end{theorem}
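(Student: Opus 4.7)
The plan is to combine the partial democracy hypothesis with truncation quasi-greediness on both sides to reduce the bidemocracy inequality $\varphi(m)\varphi^*(m)\lesssim m$ to an estimate on a single carefully chosen ``tail'' set, and then to close that estimate by a duality/stratification argument based on Lemma~\ref{lem:truncation quasi-greedyQU}.

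First I would fix $m\in\NN$ and pick $A,B\subseteq\NN$ with $|A|,|B|\le m$ and $\|\Ind_A\|\ge \varphi(m)/2$, $\|\Ind^*_B\|\ge \varphi^*(m)/2$. Applying partial democracy of $\XB$ to $D=A\cup B$ produces a finite set $E_1\supseteq A\cup B$; applying partial democracy of $\XB^*$ to $D=E_1$ then produces a finite set $E_2\supseteq E_1$. For any $F\subseteq\NN\setminus E_2$ with $|F|=m$, partial democracy yields
\[
\|\Ind_A\|\le C_1\,\|\Ind_F\|\quad\text{and}\quad \|\Ind^*_B\|\le C_2\,\|\Ind^*_F\|,
\]
so the theorem will follow as soon as I can exhibit such an $F$ satisfying $\|\Ind_F\|\cdot \|\Ind^*_F\|\lesssim m$, with constants depending only on the truncation quasi-greediness and partial democracy constants.

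For this remaining core estimate I would proceed by duality. Writing $\|\Ind_F\|=\sup_{\|g^*\|\le 1}\sum_{n\in F}g^*(\xx_n)$, I would fix a near-extremizing $g^*$ and stratify $F$ along the level sets $F_k=\{n\in F\colon 2^{-k-1}<|g^*(\xx_n)|\le 2^{-k}\}$. Lemma~\ref{lem:truncation quasi-greedyQU} applied to $\XB^*$ with $f^*=g^*$ (flattening the coefficients of $g^*$ on $F_k$) bounds $\|\Ind^*_{F_k}\|$ from above by a constant multiple of $2^k$; the same lemma applied to $f^*=\Ind^*_F$ (for which every subset of $F$ is a greedy set) yields $\|\Ind^*_{F_k}\|\lesssim \|\Ind^*_F\|$. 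Combining these two upper bounds on $\|\Ind^*_{F_k}\|$ with the elementary inequality $|F_k|\le \|\Ind_{F_k}\|\cdot \|\Ind^*_{F_k}\|$ and summing in layer-cake fashion should produce the required control on $\sum_{n\in F}|g^*(\xx_n)|$, and thus on $\|\Ind_F\|\cdot \|\Ind^*_F\|$.

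The main obstacle I anticipate is controlling a logarithmic factor that a naive stratification produces when one simply takes the minimum $\min(2^k,\|\Ind^*_F\|)$ across all scales: such a direct balancing tends to give an extra $\log(m/\varphi(m))$ rather than the sharp linear growth in $m$. Eliminating this extra logarithm requires exploiting partial democracy of $\XB$ more sharply to bound $\|\Ind_{F_k}\|$ itself (rather than by the crude $\varphi(m)$), so that the contributions from the level sets form a convergent geometric series. Orchestrating this fine balance between truncation quasi-greediness and partial democracy on both sides is, to my mind, the technical heart of the argument.
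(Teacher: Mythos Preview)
Your reduction via partial democracy to a single ``tail'' set $F$ of size $m$ with $\|\Ind_F\|\gtrsim\varphi(m)$ and $\|\Ind_F^*\|\gtrsim\varphi^*(m)$ is correct, but notice that it has not actually simplified the problem: since this $F$ realises both suprema up to constants, the inequality $\|\Ind_F\|\cdot\|\Ind_F^*\|\lesssim m$ is equivalent to bidemocracy itself. Your proposed proof of that core estimate then uses only truncation quasi-greediness of $\XB$ and $\XB^*$, with no further input from partial democracy. But truncation quasi-greediness of both $\XB$ and $\XB^*$ cannot by itself yield $\|\Ind_F\|\cdot\|\Ind_F^*\|\lesssim|F|$: for instance, the canonical basis of $\ell_p\oplus\ell_q$ with $p<q$ is unconditional (hence truncation quasi-greedy on both sides), yet a set $F$ split evenly between the two summands has $\|\Ind_F\|\cdot\|\Ind_F^*\|\approx |F|^{1+1/p-1/q}\gg|F|$. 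So the stratification scheme is aiming at a false target, and the logarithmic loss you identify is a genuine obstruction, not a technicality. Your suggested fix --- ``exploit partial democracy of $\XB$ more sharply to bound $\|\Ind_{F_k}\|$'' --- cannot work as stated, because partial democracy is a \emph{one-directional} comparison from subsets of a fixed $D$ to sets lying outside some $E\supseteq D$; it gives no control whatsoever on $\|\Ind_{F_k}\|$ for level sets $F_k\subseteq F$ in terms of $|F_k|$.

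The paper's proof (adapting \cite{DKKT2003}*{Theorem 5.4}) circumvents the logarithm by an entirely different device. Rather than taking $|F|=m$, one uses partial democracy to pass to a set $D$ of size $2m$ on which the lower estimates $\|\Ind_A\|\gtrsim\udf(m)$ and $\|\Ind_A^*\|\gtrsim\udf^*(m)$ hold for \emph{every} $A\subseteq D$ with $|A|\ge m$. One then does not pick a norm-maximising functional at all: instead, among all $f^*\in\spn(\xx_n^*:n\in D)$ with $\|f^*\|\le1$ and $f^*(\Ind_D)\gtrsim\udf(m)$, one selects the one that \emph{minimises} $\sum_{n\in D}|f^*(\xx_n)|^2$. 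The variational inequality for this $\ell_2$-minimiser, tested against a competitor $g^*$ supported on $D\setminus E$ where $E$ is an $m$-element greedy set of $f^*$, yields $\sum_{n\in D}|f^*(\xx_n)|^2\le \min_{n\in E}|f^*(\xx_n)|\cdot\udf(m)$. Truncation quasi-greediness of $\XB^*$ bounds $\min_{n\in E}|f^*(\xx_n)|\cdot\|\Ind_E^*\|$, and the partial-democracy lower bound on $\|\Ind_E^*\|$ then gives $\sum|f^*(\xx_n)|^2\lesssim\udf(m)/\udf^*(m)$. A single application of Cauchy--Schwarz closes the argument with no logarithm. The $\ell_2$-minimisation step and the doubling $|D|=2m$ (which creates room for the greedy set $E$) are precisely the ideas your outline is missing.
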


\begin{proof}
We will customize the proof of \cite{DKKT2003}*{Theorem 5.4} to suit our more general statement. By Lemma~\ref{lem:truncation quasi-greedyQU} there is a constant $\Lambda$ such that
\begin{equation}
\Vert \Ind_{\varepsilon,A} \Vert\le \Lambda \Vert \Ind_{\varepsilon,A} + f\Vert \label{anotherone}
\end{equation}
for every $A\subseteq \NN$ finite, every $\varepsilon\in\EE_A$, and every $f\in\XX$ with $\supp(f)\cap A=\emptyset$.
Applying the Hahn--Banach theorem to the equivalence class of $\Ind_{\varepsilon,A}$ in the quotient space $\XX/\overline{\spn}(\xx_n\colon n\notin A)$ yields $f^*\in\spn(\xx_n^* \colon n\in A)$ with $\|f^{\ast}\|=1$ such that
\[
{\|\Ind_{\varepsilon, A}\|}\le {\Lambda} |f^{\ast}(\Ind_{\varepsilon,A})|.
\]

Set $\udf=\udf[\XB,\XX]$ and $\udf^{\ast}=\udf[\XB^{\ast},\XX^{\ast}]$. Let $\Delta_d$ and $\Delta_d^{\ast}$ be the partial democracy constants of $\XB$ and $\XB^{\ast}$ respectively, and let $\Lambda_u^{\ast}$ be the truncation quasi-greedy constant of $\XB^{\ast}$. Given $m\in \NN$, fix $0<\epsilon<1$ and choose sets $B_1, B_2$ and signs $\varepsilon\in \EE_{B_1}$, $\varepsilon'\in \EE_{B_2}$ so that $|B_1|\le m$, $|B_2|\le m$,
\begin{align}
\|\Ind_{\varepsilon, B_1}\|\ge (1-\epsilon) \udf(m)
\;\text{and}\;
\|\Ind_{\varepsilon', B_2}^{\ast}\|\ge (1-\epsilon)\udf^{\ast}(m). \label{two}
\end{align}
Use partial democracy to pick $D\subseteq\NN$ disjoint with $B_1\cup B_2$ such that $|D|=2m$, $\Vert \Ind_B\Vert \le \Co \Vert \Ind_A\Vert$, and $\Vert \Ind_B^{\ast}\Vert \le \Co^{\ast} \Vert \Ind_A^{\ast}\Vert$ whenever $B\subseteq B_1\cup B_2$ and $A\subseteq D$ satisfy $|B|\le |A|$.

It follows from \eqref{two} and partial democracy that, for every $A\subseteq D$ with $|A|\ge m$,
\begin{align}
(1-\epsilon)\udf(m)\le& \Co\|\Ind_{A}\|,
\;\text{and}\;
(1-\epsilon)\udf^{\ast}(m)\le \Co^{\ast}\|\Ind_{A}^{\ast}\|,\label{one3}
\end{align}
where $\Co= 2\lambda \Delta_d$ and $\Co^{\ast}=2 \lambda \Delta_d^{\ast}$ with $\lambda=1$ if $\FF=\RR$ or $2$ if $\FF=\CC$.
For such subsets $A$ of $\NN$ the set
\[
\KK_A=\left\{f^{\ast}\in
\spn(\xx_n^{\ast} \colon n\in A) \colon \|f^{\ast}\|\le 1, \; f^{\ast}(\Ind_{A})\ge \frac{ (1-\epsilon)\pb(m)}{\Co\Lambda}\right\}
\]
is convex and nonempty. Note that $\KK_A$ increases with $A$, and that
\begin{equation}\label{eq:TrivialEst}
\sum_{n\in A} |f^{\ast}(\xx_n)|
= f^{\ast}\left( \Ind_{\overline{\varepsilon(f^{\ast})},A}\right)
\le \Vert f^{\ast}\Vert \, \left\Vert \Ind_{\overline{\varepsilon(f^{\ast})},A}\right\Vert
\le \pb(|A|), \; f^{\ast}\in \KK_A.
\end{equation}

Pick $f^{\ast}\in\KK_D$ that minimizes $\sum_{n\in D}|f^{\ast}(\xx_n)|^2$. The geometric properties of minimizing vectors on convex subsets of Hilbert spaces yield
\begin{equation}\label{eq:GeoH}
\sum_{n\in D}|f^{\ast}(\xx_n)|^2\le \Re\left( \sum_{n\in D} f^{\ast}(\xx_n) g^{\ast}(\xx_n)\right), \quad g^{\ast}\in \KK_D.
\end{equation}
Let $E$ be a greedy set of $f^{\ast}$ with $|E|=m$, and put $A=D\setminus E$. Using that $\XB^{\ast}$ is truncation quasi-greedy we obtain
\begin{equation}\label{eq:truncation quasi-greedyD}
\min_{n\in E}|f^{\ast}(\xx_n)|\, \|\Ind_{E}^{\ast}\|\le \Lambda_u^{\ast}\|f^{\ast}\|\le \Lambda_u^{\ast}.
\end{equation}
Pick $g^{\ast}\in \KK_A$. By \eqref{eq:GeoH}, \eqref{eq:TrivialEst}, \eqref{eq:truncation quasi-greedyD} and \eqref{one3},
\begin{align*}
\sum_{n\in D}|f^{\ast}(\xx_n)|^2&\le \sum_{n\in A}|f^{\ast}(\xx_n)||g^{\ast}(\xx_n)|\\
&\le \min_{n\in E}|f^{\ast}(\xx_n)| \sum_{n\in A} |g^{\ast}(\xx_n)|\\
&\le\frac{\Lambda_u^{\ast}}{\Vert \Ind_E^{\ast}\Vert} \udf(m) \\
&\le\frac{\Lambda_u^{\ast}\Co^{\ast}}{(1-\epsilon)\udf^{\ast}(m)}\pb(m).
\end{align*}
Hence, by the Cauchy--Bunyakovsky--Schwarz inequality,
\begin{align*}
(1-\epsilon)^{2}(\pb(m))^2&\le \Co^2\Lambda^2 | f^{\ast}(\Ind_D)|^2\\
&\le \Co^2\Lambda^2\left(\sum_{n\in D}|f^{\ast}(\xx_n)|\right)^2\\
&\le 2 \Co^2\Lambda^2 m \sum_{n\in D}|f^{\ast}(\xx_n)|^2\\
& \le 2m \frac{\Co^2\Co^{\ast} \Lambda^2\Lambda_u^{\ast}}{(1-\epsilon)} \frac{\udf(m)}{\udf^{\ast}(m)}.
\end{align*}
Since $\epsilon$ is arbitrary, we obtain
\[
\udf(m)\udf^{\ast}(m)\le 2 \Co^2\Co^{\ast}\Lambda^2\Lambda_u^{\ast}m,
\]
and so the basis is bidemocratic.
\end{proof}

\begin{corollary}
Let $\XB$ be a basis of a Banach space $\XX$. Suppose that both $\XB$ and $\XB^{\ast}$ are truncation quasi-greedy and conservative. Then $\XB$ is bidemocratic.
\end{corollary}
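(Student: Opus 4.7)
The plan is to derive the corollary as a direct consequence of Theorem~\ref{thm:PDtruncation quasi-greedy} by observing that every conservative basis is partially democratic, and that conservativeness, like partial democracy, is inherited by the hypothesis on $\XB^{\ast}$. Since the theorem already packages all the delicate functional-analytic work (the Hahn--Banach step, the Hilbert-space minimization, the Cauchy--Bunyakovsky--Schwarz estimate), what remains is an essentially bookkeeping reduction.

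The first and only real step is to verify the implication \emph{conservative} $\Rightarrow$ \emph{partially democratic} with the same constant. Suppose $\XB$ is conservative with constant $C$, and let $D\subseteq\NN$ be a finite set. Put $E=\{1,2,\dots,\max(D)\}$, so that $D\subseteq E$ and $E$ is finite. If $A\subseteq D$ and $B\subseteq \NN\setminus E$ satisfy $|A|\le |B|$, then $\max(A)\le \max(D)<\min(B)$, and conservativeness immediately gives $\|\Ind_A\|\le C\|\Ind_B\|$. Hence $\XB$ is partially democratic with the same constant $C$. The argument applies verbatim to the dual basis, so if $\XB^{\ast}$ is conservative, then $\XB^{\ast}$ is partially democratic.

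Having made this observation, the conclusion follows at once: under the hypotheses of the corollary, both $\XB$ and $\XB^{\ast}$ are truncation quasi-greedy (by assumption) and partially democratic (by the reduction just performed), so Theorem~\ref{thm:PDtruncation quasi-greedy} applies and yields the bidemocracy of $\XB$.

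There is no genuine obstacle here; the only point requiring a modicum of care is making sure that the choice of $E$ depends only on $D$ (not on $A$ or $B$), which is forced by the definition of partial democracy. The choice $E=\{1,\dots,\max(D)\}$ does the job cleanly, and the matching constants coming from conservativeness of $\XB$ and $\XB^{\ast}$ slot directly into the constants $\Co$ and $\Co^{\ast}$ appearing in the proof of Theorem~\ref{thm:PDtruncation quasi-greedy}.
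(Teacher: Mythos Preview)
Your proof is correct and follows exactly the paper's approach: the paper's own proof simply states that the result ``follows readily from Theorem~\ref{thm:PDtruncation quasi-greedy} since conservative bases are partially democratic,'' and you have supplied the (easy) verification of that implication with the natural choice $E=\{1,\dots,\max(D)\}$.
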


\begin{proof}
If follows readily from Theorem~\ref{thm:PDtruncation quasi-greedy} since conservative bases are partially democratic.
\end{proof}

\begin{remark}
Note that Theorem~\ref{thm:PDtruncation quasi-greedy} makes sense only for Banach spaces, i.e., it cannot be extended to nonlocally convex quasi-Banach spaces. Indeed, for $0<p<1$ the unit vector system of $\ell_p$ is a democratic unconditional basis whose dual basis is the unit vector system of $c_0$, which also is democratic; but the unit vector system of $\ell_p$ is not bidemocratic!
\end{remark}

\section{Existence of bidemocratic non-quasi-greedy bases}\label{sect:BDNonQG}\noindent
This section is geared towards proving the existence of bidemocratic bases which are not quasi-greedy. To that end, let us first set the minimum requirements on terminology we need for this section.

Suppose $\XB=(\xx_{n})_{n=1}^{\infty}$ is a democratic basis in a quasi-Banach space $\XX$. We shall say that $\XB$ has the \emph{lower regularity property} (LRP for short) if there is an integer $b\ge 2$ such
\begin{equation}\label{LRPdef}
2 \varphi(m) \le \varphi(bm), \quad m\in\NN.
\end{equation}
In a sense, the LRP is the dual property of the URP. Abusing the language we will say that a sequence has the URP (respectively, LRP), if its terms verify the condition \eqref{URPdef} (respectively, \eqref{LRPdef}). Note that $(\varphi(m))_{m=1}^\infty$ has the LRP if and only if $(m/\varphi(m))_{m=1}^\infty$ has the URP. If $(\varphi(m))_{m=1}^\infty$ has the LRP then there is $a>0$ and $C\ge 1$ such that
\begin{equation}\label{eq:LRP}
\frac{m^a}{n^a}\le C \frac{\varphi(m)}{\varphi(n)}, \quad n\le m.
\end{equation}
In the case when $\varphi$ is non-decreasing and the sequence $(\varphi(m)/m)_{m=1}^\infty$ is non-increasing, $\varphi$ has the LRP if and only if the weight $\ww=(w_n)_{n=1}^\infty$ defined by $w_n=\varphi(n)/n$ is a \emph{regular} weight, i.e., it satisfies the Dini condition
\[
\sup_{n} \frac{1}{n w_n} \sum_{k=1}^n w_k <\infty
\]
(see \cite{AABW2021}*{Lemma 9.8}), in which case
\begin{equation}\label{eq:LRPbis}
\sum_{n=1}^m \frac{\varphi(n)}{n} \approx \varphi(m), \quad m\in\NN.
\end{equation}
For instance, the power sequence $(m^{1/p})_{m=1}^{\infty}$ has the URP for $1<p<\infty$. In other words, the weight $\ww= (n^{-a})_{n=1}^{\infty}$ is regular for $0<a<1$.

We will need the following elementary lemma about the \emph{harmonic numbers}
\[
H_m=\sum_{n=1}^m \frac{1}{n}, \quad m\in\NN\cup\{0\}.
\]
\begin{lemma}\label{lem:JarDif}
For each $0<a<1$ there exists a constant $C(a)$ such that
\begin{equation*}
S(a,r,t):=\sum_{k=r+1}^t k^{-a}(k-r)^{a-1}\le C(a) (H_t-H_r), \quad t\ge 2r.
\end{equation*}
\end{lemma}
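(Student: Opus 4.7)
The strategy is to split the summation range at $k=2r$, which is permitted by the hypothesis $t\ge 2r$, and estimate the two pieces separately. The ``bulk'' piece consists of those indices $k\in[2r+1,t]$ where $k-r$ is comparable to $k$; the ``diagonal'' piece $k\in[r+1,2r]$ is more delicate because the factor $(k-r)^{a-1}$ has pole-like behaviour as $k\to r^{+}$.

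For the bulk piece, since $k\ge 2r+1$ gives $k-r\ge k/2$ and $a-1<0$, one obtains $(k-r)^{a-1}\le 2^{1-a}k^{a-1}$. Thus each summand is at most $2^{1-a}/k$, and the corresponding partial sum is bounded by $2^{1-a}(H_t-H_{2r})\le 2^{1-a}(H_t-H_r)$.

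For the diagonal piece I would substitute $j=k-r$ and note that $(j+r)^{-a}\le r^{-a}$ when $r\ge 1$. Since the map $x\mapsto x^{a-1}$ is positive and decreasing on $(0,\infty)$, the usual integral comparison gives
\[
\sum_{j=1}^{r} j^{a-1}\le \int_{0}^{r} x^{a-1}\,dx=\frac{r^{a}}{a},
\]
so the diagonal piece is at most $r^{-a}\cdot r^{a}/a=1/a$, uniformly in $r$ and $t$. To convert this uniform $1/a$ bound into a multiple of $H_t-H_r$, observe that for $r\ge 1$ and $t\ge 2r$ one has $H_t-H_r\ge H_{2r}-H_r\ge r\cdot(2r)^{-1}=1/2$. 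The degenerate case $r=0$ is trivial, since then $S(a,0,t)=\sum_{k=1}^{t}k^{-1}=H_t=H_t-H_0$. Combining the two pieces yields an admissible constant such as $C(a)=2/a+2^{1-a}$.

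The only genuine subtlety — what passes for the ``hard part'' here — is the diagonal piece: its summands look singular as $k\to r^{+}$, but the count of such terms (namely $r$) is exactly balanced by the factor $r^{-a}$ coming from $k^{-a}\asymp r^{-a}$ on that range, with the integrability of $x^{a-1}$ near zero (guaranteed precisely by $a>0$) providing the cancellation. Everything else is a routine integral/series comparison.
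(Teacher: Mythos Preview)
Your proof is correct and follows essentially the same strategy as the paper's: split at $k=2r$, control the ``diagonal'' piece by a constant via the integrability of $x^{a-1}$ near~$0$, control the bulk via $(k-r)\asymp k$, and absorb the constant using $H_t-H_r\ge 1/2$ when $t\ge 2r$. The paper packages these steps through a single integral comparison with the scaled function $f(u)=u^{-a}(u-1)^{a-1}$ and first obtains the bound $C_1\log(t/r)$ before converting to harmonic numbers, whereas you work directly with the discrete sum and get the explicit constant $C(a)=2/a+2^{1-a}$; the underlying ideas are the same.
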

\begin{proof}
The inequality is trivial for $r=0$. So we assume that $r\ge 1$. If we define $f\colon [1,\infty) \to [0,\infty)$ by
$
f(u) = u^{-a} (u-1)^{a-1},
$
we have
\[
k^{-a}(k-r)^{a-1} \le x^{-a} (x-r)^{a-1}= \frac{1}{r} f\left( \frac{x}{r}\right), \quad k\in \NN, \; x\in [k-1,k].
\]
Hence,
\[
S(a,r,t)\le \int_{r}^t f\left( \frac{x}{r}\right) \frac{dx}{r}=\int_1^{t/r} f(u) \, du.
\]
Since $f$ is integrable on $[1,2]$ and $f(u) \lesssim 1/u$ for $u\in[2,\infty)$, there is a constant $C_1$ such that $S(a,r,t) \le C_1 \log(t/r)$. Taking into account that $H_t-H_r\ge (t-r)/t\ge 1/2$, and that there is a constant $C_2$ such that $ \log m\le H_m\le \log m+C_2$ for all $m\in\NN$ we are done.
\end{proof}

For further reference, we record an easy lemma that we will use several times. Note that it applies in particular to the harmonic series.

\begin{lemma}\label{lem:Jar}
Let $\sum_{n=1}^\infty c_n$ be a divergent series of nonnegative terms. Suppose that $\lim_n c_n=0$. Then, for every $m\in\NN\cup\{0\}$ and $0\le a<b$, there are $m\le r<s$ such that $a\le \sum_{n=r+1}^s c_n <b$.
\end{lemma}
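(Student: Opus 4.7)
The plan is to use both hypotheses in tandem: the condition $\lim_n c_n=0$ ensures that individual terms eventually become strictly smaller than the window width $b-a$, so that a running partial sum cannot leap over the target interval $[a,b)$ in a single step; meanwhile, the divergence of $\sum c_n$ guarantees that from any fixed index onwards the tail still accumulates enough mass to reach the threshold $a$.

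Concretely, I would first pick $N\in\NN$ such that $c_n<b-a$ for every $n>N$, which is possible because $c_n\to 0$ and $b-a>0$. Setting $r=\max\{m,N\}$ yields $r\ge m$ as required. Because the terms are nonnegative and the series diverges, the partial sums $\sum_{n=r+1}^{k}c_n$ tend to $+\infty$ as $k\to\infty$, so the set of integers $k\ge r+1$ with $\sum_{n=r+1}^{k}c_n\ge a$ is nonempty, and I would define $s$ to be its minimum. Then $\sum_{n=r+1}^{s}c_n\ge a$ by construction, giving at once the lower bound in the conclusion.

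For the strict upper bound $\sum_{n=r+1}^{s}c_n<b$, I would split into two cases. If $s=r+1$ then the sum equals $c_{r+1}$, and $r+1>N$ forces $c_{r+1}<b-a\le b$. If instead $s\ge r+2$, the minimality of $s$ gives $\sum_{n=r+1}^{s-1}c_n<a$, whence
\[
\sum_{n=r+1}^{s}c_n=\sum_{n=r+1}^{s-1}c_n+c_s<a+(b-a)=b,
\]
again using $c_s<b-a$ because $s>N$. Combined with the lower bound already obtained, this finishes the argument.

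There is no serious obstacle in this proof; the reasoning is essentially a one-paragraph real-analysis exercise. The only point demanding a little attention is to treat the degenerate case $s=r+1$ separately, since there the minimality of $s$ produces only the vacuous statement that the empty sum is $0$ (which is not strictly less than $a$ when $a=0$), so the strict upper bound must be obtained directly from the small-term bound $c_{r+1}<b-a$.
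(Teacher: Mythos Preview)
Your proof is correct and complete; the paper itself records this lemma without proof (calling it ``an easy lemma''), so there is nothing to compare against. The argument you give is the standard one, and your care with the boundary case $s=r+1$ (needed precisely when $a=0$) is appropriate.
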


We will also use the following well-known lemma. Note that it could be used to prove the divergence of the harmonic series.

\begin{lemma}[See \cite{Rudin1976}*{Exercise 11, p.\ 84}]\label{lem:AlsoDiverges}
Let $\sum_{n=1}^\infty c_n$ be a divergent series of nonnegative terms. Then the (smaller) series
\[
\sum_{n=1}^\infty \frac{c_n}{\sum_{k=1}^n c_k}
\]
also diverges.
\end{lemma}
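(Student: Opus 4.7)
My plan is to show directly that the partial sums of $\sum_{n=1}^\infty c_n / S_n$ (where $S_n := \sum_{k=1}^n c_k$) fail the Cauchy criterion. Write $S_n = \sum_{k=1}^n c_k$; since the $c_k$ are nonnegative and $\sum c_k$ diverges, $(S_n)_{n=1}^\infty$ is non-decreasing and tends to $\infty$. (We may assume $c_1>0$ after discarding an initial zero block; terms with $c_k=0$ contribute $0$ to the target series under the obvious convention $0/0=0$, so they are harmless.)

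The key inequality is the monotonicity of $S_k$: for any $n<m$, since $S_k\le S_m$ on $k\in\{n+1,\dots,m\}$,
\[
\sum_{k=n+1}^{m}\frac{c_k}{S_k}\ge \frac{1}{S_m}\sum_{k=n+1}^{m}c_k=\frac{S_m-S_n}{S_m}=1-\frac{S_n}{S_m}.
\]

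Now, given an arbitrary $n\in\NN$, use the fact that $S_m\to\infty$ to pick $m>n$ with $S_m\ge 2S_n$; this is possible precisely because $\sum c_k=\infty$. Then the displayed bound gives
\[
\sum_{k=n+1}^{m}\frac{c_k}{S_k}\ge \tfrac{1}{2}.
\]
Hence for every $n$ we can find $m>n$ such that the partial-sum increment exceeds $1/2$, so the series $\sum_{n=1}^\infty c_n/S_n$ does not satisfy the Cauchy criterion and therefore diverges.

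There is no real obstacle here beyond bookkeeping for the edge case $S_n=0$; the heart of the argument is the single monotonicity inequality above, and the proof is elementary.
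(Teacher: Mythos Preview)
Your argument is correct and is exactly the classical proof (the one Rudin has in mind): bound the block $\sum_{k=n+1}^{m} c_k/S_k$ below by $1-S_n/S_m$ via monotonicity of $S_k$, then choose $m$ with $S_m\ge 2S_n$ to force each block to exceed $1/2$, contradicting the Cauchy criterion. The paper does not supply its own proof of this lemma---it simply cites \cite{Rudin1976}*{Exercise~11, p.~84}---so there is nothing further to compare; your write-up fills in precisely the intended elementary argument.
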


Lorentz sequence spaces $d_{1,q}(\ww)$ play a relevant role in the qualitative study of greedy-like bases. Let $\ww=(w_n)_{n=1}^\infty$ be a weight (i.e., a sequence of nonnegative numbers with $w_1>0$) whose primitive weight $(s_m)_{m=1}^\infty$, defined by $s_m=\sum_{n=1}^m w_n$, is unbounded and \emph{doubling}, i.e.,
\[
\sup_m \frac{s_{2m}}{s_m} <\infty.
\]
Given $0<q\le \infty$, we will denote by $d_{1,q}(\ww)$ the quasi-Banach space of all $f\in c_0$ whose non-increasing rearrangement $(a_n)_{n=1}^\infty$ satisfies
\[
\Vert f\Vert_{d_{1,q}(\ww)}=\left( \sum_{n=1}^\infty a_n^q s_n^{q-1}w_n\right)^{1/q}<\infty,
\]
with the usual modification if $q=\infty$. For power weights this definition yields the classical Lorentz sequence spaces $\ell_{p,q}$. To be precise, if $\ww=(n^{1/p-1})_{n=1}^\infty$ for some $0<p<\infty$, then, up to an equivalent quasi-norm, $d_{1,q}(\ww) =\ell_{p,q}$ and if $(a_n)_{n=1}^\infty$ is the non-increasing rearrangement of $f\in c_0$,
\[
\Vert f \Vert_{\ell_{p,q}}=\left( \sum_{n=1}^\infty a_n^q n^{q/p-1}\right)^{1/q}.
\]
For a quick introduction to Lorentz sequence spaces, we refer the reader to \cite{AABW2021}*{Section 9.2}. Here we gather the properties of these spaces that are most pertinent for our purposes. Although it is customary to designate them after the weight $\ww$, it must be conceded that as a matter of fact they depend on its primitive weight $(s_m)_{m=1}^\infty$ rather than on $\ww$. That is, given weights $\ww=(w_n)_{n=1}^\infty$ and $\ww'=(w_n')_{n=1}^\infty$ with primitive weights $(s_m)_{m=1}^\infty$ and $(s_m')_{m=1}^\infty$, we have $d_{1,q}(\ww)=d_{1,q}(\ww')$ (up to an equivalent quasi-norm) if and only if $s_m\approx s_m'$ for $m\in\NN$. The fundamental function of the unit vector system of $d_{1,q}(\ww)$ is equivalent to $(s_m)_{m=1}^\infty$ thus, essentially, it does not depend on $q$. We have
\[
d_{1,p}(\ww) \subseteq d_{1,q}(\ww), \quad 0<p<q\le \infty.
\]
To show that this inclusion is actually strict we can, for instance, use the sequence
\[
H_m[\ww]=\sum_{n=1}^m \frac{w_n}{s_n}, \quad m\in\NN,
\]
and notice that $\lim_m H_m[\ww]=\infty$ by Lemma~\ref{lem:AlsoDiverges}, and
\begin{equation}\label{eq:NormLorentz}
\left\Vert \sum_{n=1}^m \frac{1}{s_n}\, \ee_n\right\Vert_{d_{1,q}(\ww)}=(H_m[\ww])^{1/q}, \quad m\in\NN,\; 0<q<\infty.
\end{equation}

\begin{lemma}\label{lem:LorentzLRP}
Let $0<q\le \infty$, and let $(s_m)_{m=1}^\infty$ be the primitive weight of a weight $\ww$. Suppose that $(s_m)_{m=1}^\infty$ has the LRP and that the weight $\ww'=(w_n')_{n=1}^\infty$ given by $w_n'=s_n/n$ is non-increasing. Then:
\begin{enumerate}[label=(\roman*), leftmargin=*, widest=iii]
\item\label{LorentzLRP:1} $d_{1,q}(\ww)=d_{1,q}(\ww')$;
\item\label{LorentzLRP:2} for $0\le r \le t<\infty$, $H_t[\ww']-H_r[\ww']\approx H_t-H_r$ and
\item\label{LorentzLRP:3}
$
A(r,t):=\left\Vert \sum_{n=r+1}^t s_n^{-1}\, \ee_n \right\Vert _{d_{1,q}(\ww)} \lesssim \max\{1, (H_t-H_r)^{1/q}\}.
$
\end{enumerate}
\end{lemma}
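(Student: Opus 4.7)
My plan is to prove the three parts in order: (i) and (ii) follow quickly from the hypotheses, while (iii) requires a case analysis driven by the relative sizes of $r$ and $t$.

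For (i), recall that $d_{1,q}(\ww)$ depends on $\ww$ (up to equivalence of quasi-norms) only through its primitive weight. Thus it suffices to check that the primitive weight of $\ww'$, namely $s_m' := \sum_{n=1}^m s_n/n$, satisfies $s_m' \approx s_m$. Since $(s_m)$ is non-decreasing, $(s_m/m)$ is non-increasing by hypothesis, and $(s_m)$ has the LRP, \eqref{eq:LRPbis} applied with $\varphi = s$ yields the required equivalence. For (ii), combining (i) with $w_n' = s_n/n$ gives $w_n'/s_n' \approx 1/n$ termwise, and summing from $n = r+1$ to $t$ delivers the claim.

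For (iii), I would work with the equivalent quasi-norm on $d_{1,q}(\ww')$ furnished by (i). Since $(s_n^{-1})$ is non-increasing, the non-increasing rearrangement of $f := \sum_{n=r+1}^t s_n^{-1}\,\ee_n$ is $(s_{r+j}^{-1})_{j=1}^{t-r}$. For $q < \infty$, using $s_j' \approx s_j$ and $w_j' = s_j/j$, the substitution $k = r+j$ transforms the defining sum into
\[
A(r,t)^q \approx \sum_{k=r+1}^t \frac{s_{k-r}^q}{(k-r)\, s_k^q}.
\]
The estimate \eqref{eq:LRP} for $(s_m)$ furnishes $a > 0$ with $s_n/s_m \le C(n/m)^{a}$ whenever $n \le m$, and this inequality persists when $a$ is decreased, so I may assume $aq \in (0,1)$. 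The problem then reduces to bounding $\sum_{k=r+1}^t k^{-aq}(k-r)^{aq-1}$. When $t \ge 2r$, Lemma~\ref{lem:JarDif} with exponent $aq$ delivers the bound $H_t - H_r$ directly. When $t < 2r$, the doubling of $(s_m)$ yields $s_k \approx s_r$ for $r < k \le t$, so the sum is comparable to $s_r^{-q}\sum_{j=1}^{t-r} s_j^q/j$; applying the LRP once more as $s_j \lesssim s_r(j/r)^a$ and using $aq < 1$ to control the resulting $p$-series bounds this by $((t-r)/r)^{aq} \le 1$. The case $q = \infty$ is immediate from monotonicity, since $A(r,t) = \sup_j s_j/s_{r+j} \le 1$.

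The principal obstacle is the case split in (iii): the quantity $H_t - H_r$ can be arbitrarily small when $t - r \ll r$, so Lemma~\ref{lem:JarDif} alone cannot yield the uniform constant implicit in the $\max\{1,\cdot\}$ on the right-hand side; the short-scale regime forces the separate argument based on the doubling of $(s_m)$ and a second invocation of the LRP.
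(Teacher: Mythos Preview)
Your proof is correct and follows essentially the same route as the paper: parts (i) and (ii) are handled identically via \eqref{eq:LRPbis}, and part (iii) is split into the same two regimes at $t\approx 2r$, with Lemma~\ref{lem:JarDif} applied in the long-scale case after the same change of variables and LRP estimate. The only notable difference is in the short-scale case $t<2r$: the paper simply bounds $A(r,t)\le s_{r+1}^{-1}\bigl\Vert\sum_{n=r+1}^t\ee_n\bigr\Vert_{d_{1,q}(\ww)}\lesssim s_{t-r}/s_{r+1}\le 1$ using monotonicity of $(s_m)$ and the fundamental function, whereas you invoke doubling and a second application of the LRP power bound to control $\sum_{j=1}^{t-r}s_j^q/j$. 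Both work, but the paper's argument is shorter and avoids the extra appeal to \eqref{eq:LRP}; on the other hand, you explicitly treat $q=\infty$, which the paper leaves implicit.
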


\begin{proof}
The first part follows from \eqref{eq:LRPbis}. Let $(s_m')_{m=1}^\infty$ be the primitive weight of $\ww'$. The equivalence \eqref{eq:LRPbis} also yields
\[
\frac{w_n'}{s_n'}\approx \frac{1}{n}, \quad n\in\NN.
\]
Hence, \ref{LorentzLRP:2} holds.
Pick $0<a<1/q$ such that \eqref{eq:LRP} holds. On one hand, if $t\le 2r+1$,
\[
A(r,t) \le \frac{1}{s_{r+1}} \left\Vert \sum_{n=r+1}^t \ee_n\right\Vert_{d_{1,q}(\ww)}\lesssim \frac{s_{t-r}}{s_{r+1}}\le 1.
\]
On the other hand, if $t\ge 2r$ using again \ref{LorentzLRP:1} we obtain
\[
A(r,t)
\approx \left(\sum_{k=r+1}^t \frac{s_{k-r}^q}{s_k^q(k-r)} \right)^{1/q}
\lesssim \left(\sum_{k=r+1}^t \frac{(k-r)^{aq}}{k^{aq}(k-r)} \right)^{1/q}.
\]
Hence, applying Lemma~\ref{lem:JarDif} yields the desired inequality.
\end{proof}

To contextualize the assumptions in Theorem~\ref{theoremLp} below we must take into account that any basis $\XB$ of a $r$-Banach space $\XX$, $0<r\le 1$, is dominated by the unit vector basis of the Lorentz sequence space $d_{1,r}(\ww)$, where the primitive weight of $\ww$ is $\udf[\XB,\XX]$ (see \cite{AABW2021}*{Theorem 9.12}). Although it is not central in our study, in the proof of Theorem~\ref{theoremLp} we will keep track of the \emph{quasi-greedy parameters} of the basis,
\[
\gc_m[\XB,\XX]=\sup\{ \Vert S_A[\XB,\XX](f) \Vert \colon A
\;\text{greedy set of}\;
f\in B_\XX, \, |A|= m\},
\]
where for a finite subset $A\subseteq \NN$, we let $S_A=S_A[\XB,\XX]\colon \XX \to\XX$ denote the coordinate projection on $A$ , i.e.,
\[
S_A(f)=\sum_{n\in A} \xx_n^{\ast}(f)\, \xx_n,\quad f\in \XX.
\]
The quasi-greedy parameters are bounded above by the \emph{unconditionality parameters}
\[
\unc_m=\unc_m[\XB,\XX] :=\sup_{|A|= m} \Vert S_A\Vert, \quad m\in\NN,
\]
which are used to quantify how far the basis is from being unconditional. Thus, the following result exhibits that bidemocratic bases are close to being quasi-greedy.

\begin{theorem}\label{thm:BidCond}
Let $\XB$ be a bidemocratic basis of a $p$-Banach space $\XX$, $0<p\le 1$. Then,
\[
\unc_m[\XB,\XX]\lesssim (\log m)^{1/p}, \quad m\ge 2.
\]
\end{theorem}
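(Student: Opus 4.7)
The plan is to dyadically partition $A$ by coefficient magnitude and to use the bidemocratic inequality to produce a \emph{uniform} bound per level; the $(\log m)^{1/p}$ factor will then emerge from combining the $O(\log m)$ equal bounds via the $p$-subadditivity of the quasi-norm. Throughout I use that a bidemocratic basis is truncation quasi-greedy (\cite{AABW2021}*{Proposition 5.7}), so Lemma~\ref{lem:truncation quasi-greedyQU} applies to $\XB$. I write $\udf$ for $\udf[\XB,\XX]$ and $\udf^{\ast}$ for $\udf[\XB^{\ast},\XX^{\ast}]$, and recall that bidemocracy is equivalent to $\udf(\ell)\,\udf^{\ast}(\ell)\lesssim \ell$.

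Normalize $\Vert f\Vert=1$ and fix $A\subseteq\NN$ with $|A|=m$; discarding indices on which $\xx_n^{\ast}(f)=0$ does not alter $S_A f$, so I may assume $a_n:=\xx_n^{\ast}(f)\neq 0$ on $A$. Set $M=\max_{n\in A}|a_n|$; note $M\lesssim 1$ by the norm-boundedness of $\XB^{\ast}$. For $k\ge 0$ define $A_k=\{n\in A:M 2^{-k-1}<|a_n|\le M 2^{-k}\}$, and with $K=\lceil \log_{2}m\rceil$ split $A=A'\sqcup A''$ where $A'=\bigsqcup_{k\le K}A_k$ and $A''=\bigsqcup_{k>K}A_k$, so that $|a_n|\le M/(2m)$ on $A''$. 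For the tail, applying Lemma~\ref{lem:truncation quasi-greedyQU} with the auxiliary vector $g=(M/m)\Ind_{\varepsilon,A''}$, $\varepsilon_n=\sgn(a_n)$ (so that $|\xx_n^{\ast}(g)|=M/m\ge |a_n|$ on $A''$), yields $\Vert S_{A''}f\Vert\lesssim (M/m)\udf(m)$. Since $\udf^{\ast}(m)\ge \inf_n\Vert\xx_n^{\ast}\Vert>0$ by biorthogonality and norm-boundedness, bidemocracy forces $\udf(m)\lesssim m$, and hence $\Vert S_{A''}f\Vert\lesssim 1$.

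The core of the proof is the uniform estimate $\Vert S_{A_k}f\Vert\lesssim 1$ for every $0\le k\le K$. The same lemma applied with $g_k=M 2^{-k}\Ind_{\varepsilon,A_k}$ gives the preliminary bound $\Vert S_{A_k}f\Vert\lesssim M 2^{-k}\udf(|A_k|)$. To control $\udf(|A_k|)$, I bound $|A_k|$ from the dual side: evaluating the functional $\sum_{n\in A_k}\overline{\sgn(a_n)}\,\xx_n^{\ast}\in\XX^{\ast}$ (whose norm is at most $\udf^{\ast}(|A_k|)$) at $f$ gives $\sum_{n\in A_k}|a_n|\le \udf^{\ast}(|A_k|)$, which combined with the trivial lower bound $\sum_{n\in A_k}|a_n|>M 2^{-k-1}|A_k|$ produces $|A_k|\lesssim 2^k\,\udf^{\ast}(|A_k|)/M$. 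Invoking bidemocracy in the form $\udf(\ell)\lesssim \ell/\udf^{\ast}(\ell)$ at $\ell=|A_k|$ then yields $\udf(|A_k|)\lesssim 2^k/M$, so the factors $M 2^{-k}$ and $2^k/M$ cancel and $\Vert S_{A_k}f\Vert\lesssim 1$.

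Finally, the $p$-subadditivity of the quasi-norm gives $\Vert S_{A'}f\Vert^p\le\sum_{k=0}^{K}\Vert S_{A_k}f\Vert^p\lesssim K+1\lesssim \log m$, and combining with the tail bound yields $\Vert S_A f\Vert^p\lesssim \log m$, i.e.\ $\unc_m\lesssim (\log m)^{1/p}$. The main obstacle, and the place where bidemocracy really does the work, is the uniform per-level bound: a bare use of Lemma~\ref{lem:truncation quasi-greedyQU} produces $\Vert S_{A_k}f\Vert\lesssim M 2^{-k}\udf(m)$, whose sum is only $\udf(m)\approx m$. It is bidemocracy that converts the $\udf^{\ast}$-based information on the size of $A_k$ into a bound on $\udf(|A_k|)$ that exactly absorbs the dyadic scale $2^{-k}$.
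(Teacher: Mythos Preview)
Your proof is correct. The paper's own proof is a two-line citation: bidemocracy implies truncation quasi-greediness by \cite{AABW2021}*{Proposition~5.7}, and then \cite{AAW2021b}*{Theorem~5.1} gives $\unc_m\lesssim(\log m)^{1/p}$ for \emph{any} truncation quasi-greedy basis of a $p$-Banach space. Your argument is a direct, self-contained one (modulo Lemma~\ref{lem:truncation quasi-greedyQU}) and differs in a substantive way: you invoke bidemocracy \emph{twice}, once to get truncation quasi-greediness and hence Lemma~\ref{lem:truncation quasi-greedyQU}, and a second time in the per-level estimate, where the dual bound $|A_k|\lesssim 2^k\udf^{\ast}(|A_k|)/M$ combined with $\udf(\ell)\lesssim \ell/\udf^{\ast}(\ell)$ cancels the dyadic scale. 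The paper's route factors through the stronger, bidemocracy-free statement for truncation quasi-greedy bases; what it buys is generality (the same conditionality bound holds without bidemocracy). What your route buys is transparency: the cancellation $M2^{-k}\cdot 2^k/M=1$ makes very explicit \emph{where} the bidemocratic hypothesis does its work, and you avoid importing an external theorem. Both arguments share the same skeleton (dyadic levels, $O(\log m)$ uniformly bounded pieces, $p$-subadditivity), so the difference is in how the uniform per-level bound is obtained.
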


\begin{proof}
Just combine \cite{AABW2021}*{Proposition 5.7} with \cite{AAW2021b}*{Theorem 5.1}.
\end{proof}

Since $(\gc_m)_{m=1}^\infty$ need not be non-decreasing (see \cite{Oikhberg2018}*{Proposition 3.1}), we also set
\[
\g_m=\g_m[\XB,\XX]=\sup_{k\le m} \gc_k.
\]

Of course, $\XB$ is quasi-greedy if and only if $\sup_m \g_m=\sup_m \gc_m<\infty$, and $\XB$ is unconditional if and only if $\sup_m \unc_m<\infty$.

We will use the fact that quasi-greedy bases are in particular total bases (see \cite{AABW2021}*{Corollary 4.5}) to prove the advertised existence of bidemocratic non-quasi-greedy bases.

\begin{theorem}\label{theoremLp}
Let $1<q<\infty$, and let $\ww=(w_n)_{n=1}^\infty$ be a weight whose primitive weight $(s_m)_{m=1}^\infty$ is unbounded. Let $\XX$ be a quasi-Banach space with a basis $\XB$. Suppose that $\XB$ is bidemocratic with $\udf[\XB,\XX](m)\approx s_m$ for $m\in\NN$, and that $\XB$ has a subsequence dominated by the unit vector basis of $d_{1,q}(\ww)$. Then $\XX$ has a non-total bidemocratic basis $\YB$ with
\[
\udf[\YB,\XX](m)\approx s_m, \quad m\in \NN.
\]
Moreover, if $(s_m)_{m=1}^\infty$ has the LRP and $(s_m/m)_{m=1}^\infty$ is non-increasing,
\[
\gc_m[\YB,\XX] \gtrsim \left(\log m\right)^{{1/q'}}, \quad m\ge 2,
\]
where $1/q+1/q^{\prime}=1$.
\end{theorem}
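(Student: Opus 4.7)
My plan is to construct $\YB$ as a perturbation of $\XB$ localized on the distinguished subsequence $(\xx_{n_k})_{k=1}^{\infty}$, which by hypothesis is dominated by the unit vector basis of $d_{1,q}(\ww)$. The domination, combined with Lemma~\ref{lem:LorentzLRP}\ref{LorentzLRP:3}, gives sharp control over the norms of weighted tails along $(\xx_{n_k})$, and this is what will allow one to smuggle a nontrivial cancellation into the basis without disturbing either fundamental function. Once $\YB$ is shown to be a non-total basis with $\udf[\YB,\XX](m)\approx s_m$, the bidemocracy of $\YB$ will follow from that of $\XB$, while the failure of quasi-greediness is automatic because quasi-greedy bases are total.

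For the construction, I would build an auxiliary vector $v=\sum_{k}\alpha_{k}\,\xx_{n_{k}}\in\XX$, with coefficients $\alpha_{k}$ chosen via Lemma~\ref{lem:LorentzLRP} so that the series converges in $\XX$, and then define $\YB$ by a pairwise ``twist''
\[
\yy_{n_{2k-1}},\;\yy_{n_{2k}}\in\spn(\xx_{n_{2k-1}},\,\xx_{n_{2k}}),
\]
arranged so that the new biorthogonal functionals $\yy_{n}^{\ast}$ jointly annihilate $v$, while $\yy_{n}=\xx_{n}$ for $n\notin\{n_{k}\colon k\in\NN\}$. The estimates $\udf[\YB,\XX](m)\approx\udf[\XB,\XX](m)\approx s_m$ and $\udf[\YB^{\ast},\XX^{\ast}](m)\approx\udf[\XB^{\ast},\XX^{\ast}](m)$ are then perturbation computations whose error is absolutely summable in the $d_{1,q}(\ww)$-sense; together with the bidemocracy of $\XB$, they yield the bidemocracy of $\YB$. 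The principal obstacle is to carry out the twist while simultaneously guaranteeing (a) completeness and minimality of $(\yy_{n},\yy_{n}^{\ast})$ in $\XX\times\XX^{\ast}$, (b) uniform norm-boundedness of $(\yy_{n}^{\ast})$ in $\XX^{\ast}$, and (c) the exact cancellation $\yy_{n}^{\ast}(v)=0$ for every $n$; the $d_{1,q}(\ww)$-domination is the essential quantitative tool here.

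For the \emph{moreover} part, I would test the quasi-greedy parameters on a vector whose $\YB$-expansion has the form $f=\sum_{k=1}^{m} s_{k}^{-1}\,\yy_{n_{k}}$, possibly with a controlled correction involving $v$. Its $\XX$-norm is bounded above by $\bigl\Vert\sum_{k=1}^{m} s_{k}^{-1}\,\ee_{k}\bigr\Vert_{d_{1,q}(\ww)}\approx(\log m)^{1/q}$ via the domination hypothesis and Lemma~\ref{lem:LorentzLRP}. The pairwise twist in $\YB$ then forces any greedy set $A$ of $f$ of cardinality $m$ to unbalance the built-in cancellation, and a H\"older-type duality argument with the conjugate exponents $1/q+1/q'=1$ yields $\Vert S_{A}(f)\Vert\gtrsim (\log m)^{1/q'}\,\Vert f\Vert$, which is the stated bound. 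The exponent $q'$ arises precisely from this duality between the Lorentz scale $d_{1,q}(\ww)$ and its complementary one.
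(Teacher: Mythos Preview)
Your pairwise-twist construction has a structural obstruction that cannot be repaired. If $\yy_{n_{2k-1}},\yy_{n_{2k}}\in\spn(\xx_{n_{2k-1}},\xx_{n_{2k}})$ are linearly independent (as biorthogonality forces), then the dual functionals $\yy_{n_{2k-1}}^{\ast},\yy_{n_{2k}}^{\ast}$ must lie in $\spn(\xx_{n_{2k-1}}^{\ast},\xx_{n_{2k}}^{\ast})$, since they have to annihilate every $\yy_{m}$ with $m\notin\{n_{2k-1},n_{2k}\}$ and hence every $\xx_{m}$ with $m\notin\{n_{2k-1},n_{2k}\}$. Now your vector $v=\sum_{j}\alpha_{j}\,\xx_{n_{j}}$ satisfies $\xx_{n_{2k-1}}^{\ast}(v)=\alpha_{2k-1}$ and $\xx_{n_{2k}}^{\ast}(v)=\alpha_{2k}$, so the requirement $\yy_{n_{2k-1}}^{\ast}(v)=\yy_{n_{2k}}^{\ast}(v)=0$ forces both functionals into the one-dimensional subspace $\{a\,\xx_{n_{2k-1}}^{\ast}+b\,\xx_{n_{2k}}^{\ast}\colon a\alpha_{2k-1}+b\alpha_{2k}=0\}$ whenever $(\alpha_{2k-1},\alpha_{2k})\ne(0,0)$, contradicting their linear independence. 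In other words, a block-diagonal change of basis cannot produce a nonzero vector annihilated by \emph{all} the new dual functionals; your conditions (a) and (c) are mutually exclusive.

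The paper sidesteps this by sacrificing one basis vector rather than twisting within the subsequence. One arranges that some fixed index, say $1$, lies \emph{outside} the range of the subsequence $\eta$, defines $\yy_{\eta(k)}=\xx_{\eta(k)}+w_{k}\,\xx_{1}$ and $\yy_{n}=\xx_{n}$ for the remaining $n\ge 2$, and simply drops the index $1$. The dual system is then $(\xx_{n}^{\ast})_{n\ge 2}$ unchanged, so it annihilates $\xx_{1}$ for free, and the bidemocracy estimate for $\YB^{\ast}$ is trivial because $\YB^{\ast}$ is a subfamily of $\XB^{\ast}$. The only nontrivial point is that $\xx_{1}\in\overline{\spn}(\yy_{n}\colon n\ge 2)$, and this is exactly where the $d_{1,q}(\ww)$-domination enters: one checks that $H_{m}[\ww]^{-1}\sum_{k=1}^{m}s_{k}^{-1}\yy_{\eta(k)}\to\xx_{1}$ using \eqref{eq:NormLorentz} and Lemma~\ref{lem:AlsoDiverges}.

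For the \emph{moreover} part your test vector $\sum_{k\le m}s_{k}^{-1}\yy_{n_{k}}$ is on the right track but by itself gives nothing: in the paper's basis that sum (after dividing by $H_{m}[\ww]$) converges to $\xx_{1}$ and so has norm bounded \emph{below}. The actual test vector subtracts a second block $\sum_{r<k\le s}s_{k}^{-1}\yy_{\eta(k)}$, with $r,s\ge m$ chosen via Lemma~\ref{lem:Jar} so that the two $\xx_{1}$-contributions nearly cancel; Lemma~\ref{lem:LorentzLRP}\ref{LorentzLRP:3} then bounds the norm of the difference by a constant times $H_{m}^{-1/q'}$, while its $m$-term greedy projection is precisely the first block, of norm $\approx 1$. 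There is no H\"older-type duality argument; the exponent $1/q'$ comes from the gap $1-1/q$ between $H_{m}[\ww]$ and $\Vert g_{m}\Vert_{d_{1,q}(\ww)}=(H_{m}[\ww])^{1/q}$.
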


\begin{proof}
Choose a subsequence $\left(\xx_{\eta(k)}\right)_{k=1}^{\infty}$ of $\XB=\left(\xx_n\right)_{n=1}^{\infty}$ so that $\eta(1)\ge 2$ and the linear operator $T\colon d_{1,q}(\ww)\rightarrow \XX$ given by
\[
T\left(\ee_k\right)= \xx_{\eta(k)}, \quad k\in\NN,
\]
is bounded. For each $n\in\NN$, $n\ge 2$, define $\yy_n=\xx_n+\zz_n$, where
\[
\zz_n
=\begin{cases}
w_k \, \xx_1& \text{ if } n=\eta(k),\\
0 & \text{otherwise}.
\end{cases}
\]
It is clear that $(\yy_n,\xx_n^{\ast})_{n=2}^\infty$ is a biorthogonal system. Thus, in order to prove that $\YB:=\left(\yy_n\right)_{n=2}^{\infty}$ is a basis of $\XX$ with dual basis $\YB^*=(\xx_n^*)_{n=2}^\infty$ it suffices to prove that
$\xx_1$ belongs to the closed linear span of $\YB$.
For each $m\in\NN$ we have
\begin{align*}
f_m&:=\frac{1}{H_m[\ww]} \sum_{k=1}^m \frac{1}{s_k} \yy_{\eta(k)}\\
&=\xx_1+\frac{1}{H_m[\ww]}\sum_{k=1}^m\frac{1}{s_k} \xx_{\eta(k)}\\
&=\xx_1+\frac{1}{H_m[\ww]} T(g_m),
\end{align*}
where $g_m=\sum_{k=1}^m s_k^{-1} \ee_k$. By \eqref{eq:NormLorentz},
\[
\Vert f_m-\xx_1\Vert\le \Vert T \Vert(H_m[\ww])^{-1/q'}, \quad m\in\NN.
\]
Since by Lemma~\ref{lem:AlsoDiverges}, $\lim_m H_m[\ww]=\infty$ we obtain $\lim_m f_m=\xx_1$. Since $\yy_n^{\ast}\left(\xx_1\right)=0$ for all $n\ge 2$, $\YB$ is not a total basis.

Put $\ZB=(\zz_n)_{n=2}^\infty$. Then,
\[
\udf[\ZB,\XX](m)=\Vert \xx_1\Vert \sum_{k=1}^m w_k \approx s_m, \quad m\in\NN.
\]
Hence,
\[
\udf[\YB,\XX](m)\lesssim \udf[\XB,\XX](m)+ \udf[\ZB,\XX](m)\lesssim s_m, \quad m\in\NN,
\]
so that, since $\udf[\YB^*,\XX](m)\lesssim m/s_m$ for $m\in\NN$, $\YB$ is bidemocratic.

In the case when $(s_m)_{m=1}^\infty$ has the LRP and $(s_m/m)_{m=1}^\infty$ is non-increasing, by parts \ref{LorentzLRP:1} and ~\ref{LorentzLRP:2} of Lemma~\ref{lem:LorentzLRP} we can assume without loss of generality that
\begin{equation}\label{eq:HarmonicEquivalence} H_t[\ww]-H_r[\ww]\approx H_t-H_r, \quad 0\le r \le t.
\end{equation}

To estimate the quasi-greedy parameters we appeal to Lemma~\ref{lem:Jar} to pick for each $m\ge 2$, natural numbers $r=r(m)$ and $s=s(m)$ with $m\le r \le s$, and
\begin{equation}\label{eq:HarmonicEstimates}
H_m[\ww] \le H_s[\ww]-H_r[\ww]\le (H_m[\ww])^{1/q}+H_m[\ww].
\end{equation}
Set $h_m=\sum_{k=r+1}^s s_k^{-1} \ee_k$ and
\begin{align*}
u_m&=\frac{1}{H_m[\ww]} \left( \sum_{k=1}^{m} \frac{1}{s_k} \yy_{\eta(k)} -\sum_{k=r+1}^{s} \frac{1}{s_k} \yy_{\eta(k)}\right)\\
&=\frac{1}{H_m[\ww]} \left( T(g_m)-T(h_m) + (H_m[\ww]-H_s[\ww]+H_r[\ww]) \xx_1 \right).
\end{align*}
By Lemma~\ref{lem:LorentzLRP}~\ref{LorentzLRP:3}, \eqref{eq:NormLorentz}, \eqref{eq:HarmonicEquivalence} and \eqref{eq:HarmonicEstimates},
\[
\max\{\Vert g_m\Vert, \Vert h_m\Vert, |H_s[\ww]-H_r[\ww]-H_m[\ww]|\}\lesssim H_m^{1/q}, \quad m\in\NN.
\]
Hence,
$
\Vert u_m\Vert \lesssim H_m^{-1/q'}
$
for $m\in\NN$. Since $A_m:=\{\eta(1),\dots,\eta(m)\}$ is a greedy set of $u_m$ with respect to $\YB$, and
\[
\Vert S_{A_m}[\YB,\XX](u_m)\Vert=\Vert f_m\Vert \approx 1,\quad m\in\NN,
\]
we are done.
\end{proof}

\begin{corollary}\label{corbidem}
Let $\XX$ be a Banach space with a Schauder basis. Suppose that $\XX$ has a complemented subspace isomorphic to $\ell_{p,q}$, where $p$, $q\in(1,\infty)$. Then $\XX$ has a non-total bidemocratic basis $\YB$ with
\[
\udf[\YB,\XX](m)\approx m^{{1/p}}, \quad m\in \NN,
\]
and
\[
\gc_m[\YB,\XX] \gtrsim \left(\log m\right)^{{1/q'}}, \quad m\ge 2.
\]
\end{corollary}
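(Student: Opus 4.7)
The plan is to apply Theorem~\ref{theoremLp} with the weight $\ww=(n^{1/p-1})_{n=1}^{\infty}$, so that the primitive weight is $s_m\approx m^{1/p}$ and, up to an equivalent quasi-norm, $d_{1,q}(\ww)=\ell_{p,q}$. Since $1<p<\infty$, the sequence $(m^{1/p})_{m=1}^{\infty}$ has the LRP (take, e.g., $b=2^p$) and $(m^{1/p-1})_{m=1}^{\infty}$ is nonincreasing, so the ``moreover'' clause of Theorem~\ref{theoremLp} will produce the asserted lower bound $\gc_m[\YB,\XX]\gtrsim(\log m)^{1/q'}$ once its hypotheses are verified.

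The real work is to exhibit a bidemocratic basis $\XB$ of $\XX$ with $\udf[\XB,\XX](m)\approx m^{1/p}$ admitting a subsequence dominated by the unit vector basis of $\ell_{p,q}$. My plan is to build $\XB$ from the complemented $\ell_{p,q}$-structure as follows. Since $\ell_{p,q}\cong\ell_{p,q}\oplus\ell_{p,q}$ and $\XX$ has a complemented subspace isomorphic to $\ell_{p,q}$, Pe\l czy\'nski's decomposition method gives $\XX\cong\XX\oplus\ell_{p,q}$. Fixing such an isomorphism and transporting back to $\XX$ the sequence obtained by interlacing a Schauder basis of $\XX$ with the unit vector basis of $\ell_{p,q}$ yields a basis $\XB$ of $\XX$; its subsequence coming from the $\ell_{p,q}$-summand is equivalent to (and in particular dominated by) the unit vector basis of $\ell_{p,q}$, verifying one of the two hypotheses of Theorem~\ref{theoremLp}.

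The main obstacle will be to check that the resulting $\XB$ is bidemocratic with $\udf[\XB,\XX](m)\approx m^{1/p}$. The direct-sum description of $\XX\cong\XX\oplus\ell_{p,q}$ shows that $\udf[\XB,\XX](m)$ is, up to constants, the maximum of $m^{1/p}$ and the fundamental function of the chosen Schauder basis of the $\XX$-summand, with the analogous expression for the dual. Thus one must arrange the Schauder basis of the $\XX$-summand so that both its fundamental function and its dual fundamental function are bounded by $m^{1/p}$ and $m^{1/p'}$, respectively; this is the delicate point of the argument, and it is where the hypotheses (existence of a Schauder basis together with a complemented copy of $\ell_{p,q}$) must be exploited. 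Once $\XB$ has been produced with these properties, Theorem~\ref{theoremLp} applies directly and delivers the non-total bidemocratic basis $\YB$ of $\XX$ with $\udf[\YB,\XX](m)\approx m^{1/p}$ and $\gc_m[\YB,\XX]\gtrsim(\log m)^{1/q'}$.
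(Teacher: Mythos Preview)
Your plan follows the same route as the paper: reduce to Theorem~\ref{theoremLp} via $\XX\cong\XX\oplus\ell_{p,q}$, taking for $\XB$ the direct sum of a suitable basis of the $\XX$-summand with the unit vector basis of $\ell_{p,q}$, so that the latter sits inside $\XB$ as a subsequence. You also correctly isolate the one nontrivial point --- arranging that the basis of the $\XX$-summand have fundamental function $\lesssim m^{1/p}$ and dual fundamental function $\lesssim m^{1/p'}$ --- but you stop at naming it ``the delicate point'' without indicating any mechanism to achieve it. This is a genuine gap: an arbitrary Schauder basis of $\XX$ carries no a~priori control on its fundamental function, and nothing in the bare hypotheses ``$\XX$ has a Schauder basis and a complemented copy of $\ell_{p,q}$'' makes the required bounds obvious.

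The paper closes exactly this gap by invoking the Dilworth--Kalton--Kutzarova method \cites{DKK2003,AADK2019b}: given a Banach space with a Schauder basis and a complemented copy of $\ell_{p,q}$, the DKK construction produces a \emph{bidemocratic} Schauder basis of the whole space with fundamental function equivalent to $(m^{1/p})_{m=1}^{\infty}$. Once that basis is available, the rest of your plan (which coincides with the paper's) goes through verbatim: form the direct sum with the unit vector basis of $\ell_{p,q}$ to obtain a bidemocratic basis of $\XX\oplus\ell_{p,q}\cong\XX$ containing an $\ell_{p,q}$-subsequence, and apply Theorem~\ref{theoremLp}. So your outline is correct and matches the paper, but the missing ingredient is precisely the DKK-method; without citing or reproving it, the argument is incomplete.
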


\begin{proof}
An application of the Dilworth-Kalton-Kutzarova method, or DKK-method for short (see \cites{AADK2019b,DKK2003}), yields a bidemocratic Schauder basis of $\XX$ with fundamental function equivalent to $(m^{1/p})_{m=1}^\infty$ (see \cite{AADK2019b}). The direct sum of this basis with the unit vector system of $\ell_{p,q}$ is a bidemocratic Schauder basis of $ \XX\oplus\ell_{p,q}\approx\XX$ that posesses a subsequence equivalent to the unit vector basis of $\ell_{p,q}$. Applying Theorem~\ref{theoremLp} we are done.
\end{proof}

Note that Corollary~\ref{corbidem} can be applied with $1<p=q<\infty$, so that $\ell_{p,q}=\ell_p$. Hence as a consequence we obtain the result that we announced in the Introduction.

\begin{theorem}\label{thm:BDNotTotal}
Let $1<p<\infty$. Then $\ell_p$ has a bidemocratic non-total (hence, non-quasi-greedy) basis.
\end{theorem}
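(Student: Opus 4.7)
The plan is to obtain Theorem~\ref{thm:BDNotTotal} as an immediate consequence of Corollary~\ref{corbidem}, applied with the particular choice $q=p\in(1,\infty)$. First, I would recall the classical identification $\ell_{p,p}=\ell_p$ (with equivalent quasi-norms), so that the Lorentz space appearing as hypothesis in Corollary~\ref{corbidem} becomes simply $\ell_p$ itself. Consequently the hypotheses of the corollary are automatically satisfied when $\XX=\ell_p$: it is a Banach space, it possesses a Schauder basis (the canonical unit vector basis), and it contains a complemented subspace isomorphic to $\ell_{p,q}=\ell_p$, namely $\XX$ itself (one may also invoke that $\ell_p$ is isomorphic to $\ell_p\oplus\ell_p$ if one prefers a proper complemented copy, but for the abstract statement this is not needed).

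Next, plugging $\XX=\ell_p$ into Corollary~\ref{corbidem} produces a non-total bidemocratic basis $\YB$ of $\ell_p$ whose fundamental function is equivalent to $(m^{1/p})_{m=1}^\infty$ and whose quasi-greedy parameters satisfy
\[
\gc_m[\YB,\ell_p]\gtrsim (\log m)^{1/p'},\quad m\ge 2.
\]
Either of these two pieces of information suffices to conclude that $\YB$ is not quasi-greedy. Indeed, by the result recalled right before Theorem~\ref{theoremLp}, any quasi-greedy basis of a quasi-Banach space is total; since $\YB$ fails to be total, it cannot be quasi-greedy. The quantitative estimate on $\gc_m[\YB,\ell_p]$, which diverges as $m\to\infty$, gives an alternative and sharper witness of non-quasi-greediness.

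Given that Corollary~\ref{corbidem} already does all the heavy lifting, there is really no obstacle here: the only thing one needs to spell out carefully is the identification $\ell_{p,p}=\ell_p$ and the (trivial) fact that $\ell_p$ is complemented in itself, together with the citation of the totality property of quasi-greedy bases. I would therefore present the proof in one short paragraph, of the form: set $q=p$, apply Corollary~\ref{corbidem} to $\XX=\ell_p$, and conclude from the non-totality of the resulting basis that it cannot be quasi-greedy.
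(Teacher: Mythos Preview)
Your proposal is correct and matches the paper's own argument essentially verbatim: the paper simply observes that Corollary~\ref{corbidem} applies with $q=p$, so that $\ell_{p,q}=\ell_p$, and deduces Theorem~\ref{thm:BDNotTotal} immediately. The additional remarks you make (the trivial complementation of $\ell_p$ in itself and the citation that quasi-greedy bases are total) are fine elaborations of the same one-line deduction.
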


Theorem~\ref{thm:BDNotTotal} leads us naturally to the question about the existence of bidemocratic non-total bases in $\ell_1$ and $c_0$. We make a detour from our route to solve both questions in the negative. For that we will need to apply the arguments that follow, keeping in mind that $\ell_1=(c_0)^{\ast}$ is a GT-space (see \cite{LinPel1968}).

\begin{proposition}
Let $\XX$ be a quasi-Banach space, and let $\XB=(\xx_n)_{n=1}^\infty$ and $\YB=(\xx_n^{\ast})_{n=1}^\infty$ be sequences in $\XX$ and $\XX^{\ast}$, respectively. Suppose that $(\xx_n,\xx_n^{\ast})_{n=1}^\infty$ is a biorthogonal system and that
\[
\udf[\XB,\XX](m) \, \udf[\YB,\XX^{\ast}](m)\le C m, \quad m\in\NN,
\]
for some constant $C$. Then $\Vert \Ind_{\varepsilon,A} [\XB,\XX] \Vert\le C \Vert f \Vert$ whenever $A\subseteq \NN$, $\varepsilon\in\EE_A$, and $f\in\XX$ are such that $|\xx_n^{\ast}(f)|\ge 1$ on a set of cardinality at least $|A|$.
\end{proposition}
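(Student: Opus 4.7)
The plan is a short duality computation that never invokes Hahn--Banach: build an explicit dual test functional $g^{\ast}$ supported on a size-$|A|$ subset of $B$, use biorthogonality to give a lower bound on $g^{\ast}(f)$ of order $m=|A|$, and combine with the bidemocracy-type hypothesis applied to the product $\|\Ind_{\varepsilon,A}\|\cdot\|g^{\ast}\|$.

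Concretely, I would assume $m\ge 1$ (the case $m=0$ is trivial), pick any $B'\subseteq B$ of cardinality exactly $m$ (possible since $|B|\ge |A|=m$), and set
\[
g^{\ast}:=\sum_{n\in B'}\sgn(\xx_n^{\ast}(f))\,\xx_n^{\ast},
\]
i.e., $g^{\ast}=\Ind_{\delta,B'}[\YB,\XX^{\ast}]$ with $\delta_n=\sgn(\xx_n^{\ast}(f))$ on $B'$. Biorthogonality and the coefficient hypothesis yield
\[
g^{\ast}(f)=\sum_{n\in B'}|\xx_n^{\ast}(f)|\ge m,
\]
so $g^{\ast}\neq 0$, and in particular $m\le \|g^{\ast}\|\,\|f\|$ from the definition of the dual quasi-norm. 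On the other hand, by the definitions of the upper super-democracy functions and the assumed bidemocratic-type inequality,
\[
\|\Ind_{\varepsilon,A}\|\cdot\|g^{\ast}\|\le \udf[\XB,\XX](m)\,\udf[\YB,\XX^{\ast}](m)\le Cm\le C\,\|g^{\ast}\|\,\|f\|.
\]
Dividing by $\|g^{\ast}\|>0$ gives $\|\Ind_{\varepsilon,A}[\XB,\XX]\|\le C\,\|f\|$, as desired.

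There is no substantial obstacle here: the statement is essentially the trade one expects between bidemocracy and an unconditionality-like bound on signed indicator sums, and the mechanism that makes the argument cheap is that we produce a concrete norming-type functional $g^{\ast}$ \emph{directly} from the given biorthogonal system, so no Hahn--Banach extension is used. This is the delicate point worth flagging, since the proposition is stated for quasi-Banach spaces, where $\XX^{\ast}$ might fail to separate points or even be trivial, and any argument relying on extending functionals would be illegitimate. The only technicality is ensuring $g^{\ast}\ne 0$ before dividing, which is immediate from $g^{\ast}(f)\ge m\ge 1$.
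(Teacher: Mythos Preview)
Your argument is correct and is exactly the short duality computation the paper has in mind; the paper does not reproduce it but simply points to \cite{AABW2021}*{Proposition 5.7}. Two cosmetic remarks: over $\CC$ you should take $\delta_n=\overline{\sgn(\xx_n^{\ast}(f))}$ (not $\sgn(\xx_n^{\ast}(f))$) to obtain $g^{\ast}(f)=\sum_{n\in B'}|\xx_n^{\ast}(f)|$, and biorthogonality plays no role in your computation---only the action of the functionals $\xx_n^{\ast}$ on $f$ is used.
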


\begin{proof}
In the case when $\XB$ spans the whole space $\XX$, this proposition says that any bidemocratic basis is truncation quasi-greedy. In fact, the proof of \cite{AABW2021}*{Proposition 5.7} gives this slightly more general result.
\end{proof}

\begin{theorem}\label{thm:AAW}
Let $\XX$ be a GT-space and let $\XB=(\xx_n)_{n=1}^\infty$ and $(\xx_n^{\ast})_{n=1}^\infty$ be a sequences in $\XX$ and $\XX^{\ast}$, respectively. Suppose that $(\xx_n,\xx_n^{\ast})_{n=1}^\infty$ is a biorthogonal system and that there is a constant $C$ such that $\Vert \Ind_{\varepsilon,A} [\XB,\XX] \Vert\le C \Vert f \Vert$ whenever $A\subseteq \NN$ and $f\in\XX$ satisfy $|\xx_n^{\ast}(f)|\ge 1 \ge |\xx_k^{\ast}(f)|$ for $(n,k)\in A\times (\NN\setminus A)$, and $\varepsilon=(\varepsilon_n)_{n\in A}\in\EE_A$ is defined by $\xx_n^{\ast}(f)=|\xx_n^{\ast}(f)|\, \varepsilon_n$. Then, $\ldf(m) \gtrsim m$ for $m\in\NN$.
\end{theorem}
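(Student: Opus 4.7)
The plan is to turn the hypothesis into an operator-theoretic inequality on the $\ell_2$-valued coefficient operator, invoke GT to upgrade that operator to being $1$-summing, and then close the loop with a Gr\"onwall-type tail estimate. Fix $m\in\NN$, pick any $A\subseteq \NN$ with $|A|=m$ and any $\varepsilon\in\EE_A$, and set $\psi=\Ind_{\varepsilon,A}$.

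First I would unpack the hypothesis in two ways. By scaling $f\mapsto f/\lambda$, the assumption is exactly a truncation quasi-greedy condition on the biorthogonal pair, so (as recalled in the paper for bases) it implies the LPU-type same-$A$ comparison
\[
\|\Ind_{\sigma,A}\|\le C'\|\Ind_{\varepsilon,A}\|,\qquad \sigma\in\EE_A.
\]
Taking as greedy set the indices of the top $k$ values of $|\xx_n^{\ast}(f)|$ at threshold $c_k^{\ast}(f)$ (the $k$-th largest) and applying the hypothesis also gives the distributional estimate
\[
c_k^{\ast}(f)\le \frac{C\|f\|}{\ldf(k)},\qquad k\in\NN,\, f\in\XX,
\]
since the resulting sign-sum has norm at least $\ldf(k)$.

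Next I would introduce the operator $T\colon \XX\to \ell_2^A$ defined by $Tf=(\xx_n^{\ast}(f))_{n\in A}$. The distributional estimate yields $\|T\|^2\le C^2\sum_{k=1}^m \ldf(k)^{-2}$. By GT, $T$ is $1$-summing with $\pi_1(T)\le K_G\|T\|$. Applying the $1$-summing inequality to the family $(\xx_j)_{j\in A}$, biorthogonality gives $\sum_{j\in A}\|T\xx_j\|_2=m$, whereas the weak-$\ell_1$ side is tamed by LPU:
\[
\sup_{\|x^{\ast}\|\le 1}\sum_{j\in A}|x^{\ast}(\xx_j)|=\sup_{\sigma\in\EE_A}\|\Ind_{\sigma,A}\|\le C'\|\psi\|.
\]
Combining and taking the infimum over $A$ of size $m$ and $\varepsilon\in\EE_A$ produces the master inequality
\[
\ldf(m)^2\sum_{k=1}^{m}\frac{1}{\ldf(k)^2}\ge \frac{m^2}{K^2},\qquad K:=K_GC'C.
\]

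Finally I would close via a Gr\"onwall-type tail argument. Setting $\tau(m):=m/\ldf(m)$ and $S_m:=\sum_{k=1}^m \tau(k)^2/k^2$, the master inequality rewrites as $\tau(m)^2\le K^2 S_m$. Since $S_m-S_{m-1}=\tau(m)^2/m^2\le (K^2/m^2)S_m$, we obtain $S_m\le S_{m-1}/(1-K^2/m^2)$ for $m>K$, and iterating against the convergent series $\sum m^{-2}$ forces $(S_m)$ to be bounded; hence $\tau$ is bounded, i.e.~$\ldf(m)\gtrsim m$. The main obstacle is the LPU step: it is exactly what lets me replace $\udf(m)$ by $C'\|\psi\|$ in the weak-$\ell_1$ denominator, and without this refinement the ensuing inequality would involve $\udf$ on the right-hand side and would not close through Gr\"onwall. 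Some care is also needed to verify that LPU and the distributional estimate, usually formulated for bases, carry over verbatim to the biorthogonal-system setting of the theorem.
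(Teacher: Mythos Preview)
Your argument is correct. The paper does not give a self-contained proof here; it simply observes that the proof of \cite{AAW2021}*{Theorem 4.3} goes through verbatim for biorthogonal systems. That proof is precisely the GT/absolutely-summing argument you outline: bound the coefficient operator into $\ell_2^A$ via the distributional estimate $c_k^{\ast}(f)\le C\|f\|/\ldf(k)$, use the GT property to pass to $\pi_1$, feed in the family $(\xx_j)_{j\in A}$, and close with a self-referential inequality relating $\ldf(m)$ to $\sum_{k\le m}\ldf(k)^{-2}$. So the route is essentially the same.

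Two remarks on the points you flagged. First, the ``LPU'' step is fine in the biorthogonal setting: taking $f=\Ind_{\varepsilon,A}$ and any $B\subseteq A$ in the hypothesis yields SUCC, and in a Banach space (which a GT-space is) SUCC plus convexity gives $\sup_{\sigma}\|\Ind_{\sigma,A}\|\le C'\|\Ind_{\varepsilon,A}\|$, exactly what you need to replace the weak-$\ell_1$ term by $C'\|\psi\|$. Second, your Gr\"onwall-style closure, rewriting the master inequality as $S_m-S_{m-1}\le (K^2/m^2)S_m$ and telescoping against the convergent series $\sum m^{-2}$, is a clean and perhaps more transparent bootstrap than the one in the cited reference; both reach the same conclusion that $S_m=\sum_{k\le m}\ldf(k)^{-2}$ stays bounded, whence $\ldf(m)\gtrsim m$.
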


\begin{proof}
In the case when $\XB$ spans the whole space $\XX$, this theorem says that any truncation quasi-greedy basis of a GT-space is democratic with fundamental function equivalent to $(m)_{m=1}^\infty$ (see \cite{AAW2021}*{Theorem 4.3}). As a matter of fact, the proof of \cite{AAW2021}*{Theorem 4.3} gives this slightly more general result.
\end{proof}

\begin{theorem}Let $\XB$ be a bidemocratic basis of a Banach space $\XX$.
\begin{enumerate}[label=(\roman*),leftmargin=*,widest=ii]
\item\label{GTspace}If $\XX$ is a GT-space, then $\XB$ is equivalent to the canonical basis of $\ell_1$.
\item\label{predualGTspace}If $\XX^{\ast}$ is a GT-space, then $\XB$ is equivalent to the canonical basis of $c_0$.
\end{enumerate}
\end{theorem}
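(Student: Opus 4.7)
The plan is to run the Proposition and Theorem~\ref{thm:AAW} above in two symmetric directions: directly on the biorthogonal system $(\xx_n,\xx_n^*)$ in $\XX$, and on $(\xx_n^*,J\xx_n)$ in $\XX^*\subseteq \XX^{**}$, where $J\colon\XX\to\XX^{**}$ is the canonical embedding. Bidemocracy of $\XB$ yields $\udf[\XB,\XX](m)\,\udf[\XB^*,\XX^*](m)\lesssim m$, which is the hypothesis of the Proposition in both settings. Each resulting truncation-quasi-greedy-type estimate in turn implies the hypothesis of Theorem~\ref{thm:AAW} in the appropriate ambient space.

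For part~\ref{GTspace}, I would apply Theorem~\ref{thm:AAW} in the GT-space $\XX$ to get $\ldf[\XB,\XX](m)\gtrsim m$. Combined with the trivial bound $\varphi(m)\lesssim m$ coming from norm-boundedness of $\XB$, this forces $\varphi(m)\approx m$, and by bidemocracy $\udf[\XB^*,\XX^*](m)\lesssim 1$. I would then deduce equivalence to the canonical $\ell_1$ basis from the triangle-inequality upper bound $\Vert\sum_n a_n\xx_n\Vert\lesssim\sum_n|a_n|$ and the duality lower bound
\[
\sum_n|a_n|=\Bigl(\sum_n\overline{\sgn a_n}\,\xx_n^*\Bigr)\Bigl(\sum_n a_n\xx_n\Bigr)\le \udf[\XB^*,\XX^*](|A|)\,\Bigl\Vert\sum_n a_n\xx_n\Bigr\Vert,
\]
where $A=\supp(a_n)$, extending to all of $\ell_1$ by density since $\XB$ generates $\XX$.

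For part~\ref{predualGTspace}, I would instead apply Theorem~\ref{thm:AAW} to the biorthogonal system $(\xx_n^*,J\xx_n)$ inside the GT-space $\XX^*$, obtaining $\ldf[\XB^*,\XX^*](m)\gtrsim m$, hence $\varphi^*(m)\approx m$ and, by bidemocracy, $\udf[\XB,\XX](m)\lesssim 1$. The lower $c_0$-bound $\sup_n|a_n|\lesssim\Vert\sum_n a_n\xx_n\Vert$ is immediate from $|a_n|\le\Vert\xx_n^*\Vert\,\Vert\sum_k a_k\xx_k\Vert$. For the matching upper bound I would use that $\XB$ is truncation quasi-greedy (\cite{AABW2021}*{Proposition 5.7}) and apply Lemma~\ref{lem:truncation quasi-greedyQU} with $f=\Ind_{\varepsilon,A}$ and $\varepsilon_n=\sgn a_n$; after scaling by $\sup_n|a_n|$ this gives $\Vert\sum_{n\in A}a_n\xx_n\Vert\lesssim \udf[\XB,\XX](|A|)\sup_n|a_n|\lesssim\sup_n|a_n|$, and extension by density produces the isomorphism $c_0\to\XX$ sending $\ee_n$ to $\xx_n$.

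The main subtlety is the symmetric application of the Proposition and Theorem~\ref{thm:AAW} to the dual biorthogonal system: $\XB^*$ need not be total in $\XX^*$, but both results are formulated for arbitrary biorthogonal systems rather than only spanning bases, and this is precisely the flexibility required. After that everything is bookkeeping: bidemocracy interchanges the roles of $\varphi$ and $\varphi^*$, and Theorem~\ref{thm:AAW} in the GT ambient pins down whichever of the two is forced to grow linearly, fixing the basis as either an $\ell_1$- or a $c_0$-type system.
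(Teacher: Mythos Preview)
Your proposal is correct and follows the same overall strategy as the paper: feed bidemocracy into the Proposition to get the truncation-quasi-greedy-type estimate, then invoke Theorem~\ref{thm:AAW} in the GT ambient space (either $\XX$ or $\XX^*$, via the biorthogonal system $(\xx_n^*,J\xx_n)$) to force linear growth of $\ldf$, and use bidemocracy once more to bound the dual $\udf$. Your treatment of part~\ref{predualGTspace} matches the paper's; your appeal to Lemma~\ref{lem:truncation quasi-greedyQU} simply makes explicit the step the paper dismisses as ``readily gives''.

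The one genuine difference is in the closing argument for part~\ref{GTspace}. Once $\udf[\XB^*,\XX^*]$ is bounded, you pair $\sum_n a_n\xx_n$ directly with $\Ind^*_{\overline{\varepsilon},A}$ to obtain the $\ell_1$ lower bound, and combine this with the triangle inequality for the upper bound. The paper instead first argues that $\XB^*$ is equivalent to the canonical $c_0$-basis, deduces that $\XB^{**}$ is equivalent to the canonical $\ell_1$-basis, and then sandwiches $\XB$ between the $\ell_1$-basis (which dominates $\XB$ by the triangle inequality) and $\XB^{**}$ (which $\XB$ dominates via the canonical map into the bidual). Your route is more elementary and avoids the bidual entirely; the paper's detour has the minor side benefit of exhibiting the $c_0$-structure of $\XB^*$ along the way.
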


\begin{proof}
Suppose that $\XX$ (resp.,\ $\XX^{\ast}$) is a GT-space. By Theorem~\ref{thm:AAW} $\ldf[\XB,\XX]$ (resp.,\ $\ldf[\XB^{\ast},\XX^{\ast}]$) is equivalent to $(m)_{m=1}^\infty$. Hence, $\udf[\XB^{\ast},\XX^{\ast}]$ (resp.,\ $\udf[\XB,\XX]$) is bounded. This readily gives that $\XB^{\ast}$ (resp.\ $\XB$) is equivalent to the canonical basis of $c_0$. To conclude the proof of \ref{GTspace}, we infer that $\XB^{**}$ is equivalent to the canonical basis $\BB_{\ell_1}$ of $\ell_1$. Since $\BB_{\ell_1}$ dominates $\XB$ and $\XB$ dominates $\XB^{**}$ we are done.
\end{proof}

It is known that some results involving the TGA work for total bases but break down if we drop this assumption (see, e.g., \cite{BL2020}*{Theorem 4.2 and Example 4.5}). In view of this, another question springing from Theorem~\ref{thm:BDNotTotal} is whether working with total bases makes a difference, i.e., whether bidemocratic total bases are quasi-greedy. We solve this question in the negative by proving the following theorem.

\begin{theorem}\label{thm:BDTotalNotQG}
Let $1<p<\infty$. Then any infinite-dimensional subspace of $\ell_p$ has a further subspace with a bidemocratic non-quasi-greedy total basis.
\end{theorem}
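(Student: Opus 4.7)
By the Bessaga--Pe\l czy\'nski selection principle, every infinite-dimensional closed subspace $Y$ of $\ell_p$ contains a normalized basic sequence equivalent to the canonical basis of $\ell_p$; its closed linear span $Z\subseteq Y$ is therefore isomorphic to $\ell_p$. Since bidemocracy, totality, and failure of quasi-greediness all transfer across isomorphisms of bases, it suffices to construct the desired basis inside $Z$, and after pulling back along the isomorphism we may assume we are working in $\ell_p$ itself with its canonical basis $(\xx_n)_{n\geq 1}$.

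The plan is to assemble the basis blockwise out of finite-dimensional truncations of the non-total construction of Theorem~\ref{theoremLp}. Partition $\NN$ into consecutive blocks $(B_j)_{j\geq 1}$ with $|B_j|=N_j\to\infty$ superlinearly (e.g.\ $N_j\geq j^3$), enumerate $B_j=\{b_{j,1},\ldots,b_{j,N_j}\}$, and mimic the construction of Corollary~\ref{corbidem} with $q=p$, so that the weight $w_k\approx k^{-1/p'}$ has primitive $s_m\approx m^{1/p}$: for $1\leq k\leq N_j-1$ set
\[
\yy^{(j)}_k=\xx_{b_{j,k+1}}+w_k\xx_{b_{j,1}}.
\]
These $N_j-1$ vectors are linearly independent, their span $V_j$ does not contain $\xx_{b_{j,1}}$, and $(\yy^{(j)}_k,\xx_{b_{j,k+1}}^{\ast}|_{V_j})_{k=1}^{N_j-1}$ is a biorthogonal system, so $(\yy^{(j)}_k)_k$ is a (finite, hence total) basis of $V_j$. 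Let $V=\bigoplus_j V_j$ (the $\ell_p$-direct sum inside $\ell_p$) and let $\YB$ be the concatenation of the block bases.

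Totality of $\YB$ is inherited from the totality of each block basis. Bidemocracy follows from a blockwise estimate: for $K\subseteq\{1,\ldots,N_j-1\}$ and signs $(\varepsilon_k)_{k\in K}$,
\[
\bigl\|\textstyle\sum_{k\in K}\varepsilon_k\yy^{(j)}_k\bigr\|_p^p=|K|+\bigl|\sum_{k\in K}\varepsilon_k w_k\bigr|^p\leq |K|+s_{|K|}^p\lesssim|K|;
\]
summing $p$-th powers across the disjoint block supports gives $\udf[\YB,V](m)\approx m^{1/p}$, and a parallel computation on the dual side gives $\udf[\YB^{\ast},V^{\ast}](m)\approx m^{1/p'}$, so $\YB$ is bidemocratic.

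The main technical point is the failure of quasi-greediness. Given $m\geq 1$, pick $j$ with $N_j\geq m^2+1$. Applying Lemma~\ref{lem:Jar} to $c_n=w_n/s_n$ with target interval $[H_m,H_m+H_m^{1/p})$ produces indices $r,s$ with $m\leq r<s$ and $\log(s/r)\approx\log m$, hence $s=O(m^2)\leq N_j-1$, so the extremal construction of Theorem~\ref{theoremLp} goes through verbatim inside block $j$: it yields $u_m\in V_j\subseteq V$ with $\|u_m\|\lesssim(\log m)^{-1/p'}$ while $\|S_{A_m}[\YB,V](u_m)\|\approx 1$ for a suitably chosen greedy set $A_m$ of size~$m$ in block $j$. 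Therefore $\gc_m[\YB,V]\gtrsim(\log m)^{1/p'}\to\infty$, and the only bookkeeping obstacle is arranging $(N_j)$ to outgrow the $O(m^2)$ requirement, which any superlinear growth achieves.
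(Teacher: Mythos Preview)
Your approach is essentially correct but genuinely different from the paper's. The paper derives the theorem from Theorem~\ref{theoremLp2}, whose construction is a \emph{single} global perturbed sequence $\yy_j=\xx_{\eta(j)}+\frac{s_j}{j}\,\xx_{\psi(k)}$ (for $t_{k-1}<j\le t_k$), with the weight $s_j/j$ depending on the global index and one auxiliary vector $\xx_{\psi(k)}$ per interval $(t_{k-1},t_k]$; this yields, besides what you prove, the stronger conclusion that the basis is not Schauder under \emph{any} reordering. Your route is more modular: you concatenate disjointly supported finite-dimensional copies of the Theorem~\ref{theoremLp} construction, recover totality for free from finite-dimensionality of each block, and run the extremal estimate inside a single sufficiently large block. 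This is conceptually simpler and avoids the separate totality verification in Theorem~\ref{theoremLp2}, at the cost of losing the ``not Schauder in any order'' byproduct.

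One imprecision to fix: Lemma~\ref{lem:Jar} gives $m\le r<s$ but no upper bound on $r$, so the conclusion $s=O(m^2)$ does not follow from the cited statement alone. You should argue directly that one may take $r=m$ (which is immediate here since $c_n\approx 1/n<b-a\approx(\log m)^{1/p}$ for all $n>m$), and then $\log(s/m)\approx\log m$ gives $s=m^{2+o(1)}$ rather than $O(m^2)$. This is harmless---any choice of $N_j\to\infty$ (not just superlinear growth) lets you pick a block with $N_j-1\ge s$---so the argument goes through once the bookkeeping is adjusted.
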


Theorem~\ref{thm:BDTotalNotQG} will follow as a consequence of the following general result.

\begin{theorem}\label{theoremLp2}
Let $\ww=(w_n)_{n=1}^\infty$ be a weight, and suppose that its primitive weight $(s_m)_{m=1}^\infty$ has the LRP and that $(s_m/m)_{m=1}^\infty$ is non-increasing. Let $\XX$ be a Banach space with a total basis $\XB$. Suppose that $\XB$ is bidemocratic with $\udf[\XB,\XX](m)\approx s_m$ for $m\in\NN$, and that $\XB$ has a subsequence dominated by the unit vector basis of $d_{1,q}(\ww)$ for some $q>1$. Then $\XX$ has a subspace $\YY$ with a basis $\YB$ satisfying the following properties:
\begin{enumerate}[label=(\roman*),leftmargin=*,widest=iii]
\item\label{bidemocraticlp2}$\YB$ is bidemocratic with $\udf[\YB,\YY](m)\approx s_m$ for $m\in\NN$.
\item\label{markulp2} $\YB$ is total.
\item\label{notqglps}$\YB$ is not quasi-greedy.
\item\label{notslp2}$\YB$ is not Schauder in any order.
\end{enumerate}
\end{theorem}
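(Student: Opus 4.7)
My plan is to modify the construction of Theorem~\ref{theoremLp} so that the resulting biorthogonal system becomes total, at the cost of possibly passing to a subspace $\YY \subseteq \XX$. The core idea is to ``restore'' the helper direction $\xx_1$ to the basis sequence, and to compensate its biorthogonal functional by an infinite correction that is bounded only on $\YY$. More concretely, I would fix a subsequence $(\xx_{\eta(k)})_{k\ge 1}$ of $\XB$ dominated by the unit vector basis of $d_{1,q}(\ww)$ and set
\[
\yy_1=\xx_1,\qquad \yy_n=\xx_n\text{ for }n\ge 2,\ n\notin\eta(\NN),\qquad \yy_{\eta(k)}=\xx_{\eta(k)}+w_k\,\xx_1,
\]
together with $\yy_n^{\ast}=\xx_n^{\ast}$ for $n\ge 2$ and $\yy_1^{\ast}=\xx_1^{\ast}-\sum_{k\ge 1}w_k\,\xx_{\eta(k)}^{\ast}$ understood as a formal series. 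Take $\YY=\overline{\spn}(\yy_n\colon n\ge 1)$. Note $\YB$ is norm-bounded because the hypothesis $(s_m/m)_{m=1}^\infty$ non-increasing implies $w_k\le w_1$, so the perturbations $w_k\xx_1$ are uniformly bounded.

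For property (ii), the key point is that $\yy_1^{\ast}$ extends to a bounded linear functional on $\YY$: although the series $\sum_k w_k \xx_{\eta(k)}^{\ast}$ can fail to converge in $\XX^{\ast}$, its partial sums will be uniformly bounded as functionals on $\YY$, by virtue of the $d_{1,q}(\ww)$-domination of $(\xx_{\eta(k)})$ combined with the bidemocracy of $\XB^{\ast}$ (similarly to how Theorem~\ref{theoremLp} used the Lorentz domination to control normalized partial sums). Totality then follows because $\{\yy_n^{\ast}\colon n\ge 1\}$ and $\{\xx_n^{\ast}\colon n\ge 1\}$ span the same subspace of functionals on $\YY$, and $\XB^{\ast}$ is total. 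For property (i), the upper estimate $\udf[\YB,\YY](m)\lesssim s_m$ follows from the uniform control on perturbations plus bidemocracy of $\XB$; the matching upper estimate $\udf[\YB^{\ast},\YY^{\ast}](m)\lesssim m/s_m$ uses that $\YY^{\ast}=\XX^{\ast}/\YY^{\perp}$ so $\|\cdot\|_{\YY^{\ast}}\le\|\cdot\|_{\XX^{\ast}}$; the matching lower bounds come from the trivial inequality $\udf\cdot\udf^{\ast}\ge m$.

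Property (iii) reduces essentially to the computation already carried out in Theorem~\ref{theoremLp}: the same vectors $u_m$ belong to $\YY$ (they are finite linear combinations of $\yy_{\eta(k)}$), still satisfy $\|u_m\|\lesssim H_m^{-1/q'}$, and $A_m=\{\eta(1),\ldots,\eta(m)\}$ remains a greedy set for $u_m$ with $\|S_{A_m}[\YB,\YY](u_m)\|\approx 1$, so $\gc_m[\YB,\YY]\gtrsim(\log m)^{1/q'}\to\infty$, which precludes quasi-greediness.

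The main obstacle is property (iv), since if $\XB$ is Schauder then the coefficient sequences $(\yy_n^{\ast}(f))$ decay, so ``not Schauder in any order'' cannot follow from a crude coefficient argument. The proof must instead exhibit, for every permutation $\pi$ of $\NN$, an element $f\in\YY$ whose $\pi$-partial sums diverge; this is where the conditional accumulation built into the construction (the fact that $\xx_1$ is the limit of $H_m[\ww]^{-1}\sum_{k\le m} s_k^{-1}\yy_{\eta(k)}$) should be leveraged by producing, for each $\pi$, a carefully selected $f$ that separates the ``positive'' and ``negative'' contributions of $(w_k\xx_1)$ sufficiently far apart along $\pi$, forcing $\sup_N \|S_N^{\pi}(f)\|=\infty$. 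Formulating this into a rigorous argument -- essentially showing that the reordering-invariance of the TGA mentioned in Section~\ref{sect:truncation quasi-greedy} forces the same divergence phenomenon under every reordering -- is the delicate step where techniques along the lines of \cite{BL2020}*{Example~4.5} will be required.
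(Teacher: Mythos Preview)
Your proposal has a fatal gap at the very first step. Since $\yy_1=\xx_1$ and $\yy_{\eta(k)}-w_k\yy_1=\xx_{\eta(k)}$, the span of $\YB$ contains all of $\XB$, so $\YY=\XX$. But Theorem~\ref{theoremLp} shows precisely that $\xx_1=\lim_m f_m$ with $f_m\in\spn(\yy_n:n\ge 2)$; hence $\yy_1\in\overline{\spn}(\yy_n:n\ge 2)$, and no bounded functional can be biorthogonal to $\yy_1$ against the rest of $\YB$. Concretely, your proposed $\yy_1^{\ast}$ applied to $g_m=\sum_{k=1}^m s_k^{-1}\xx_{\eta(k)}$ gives $-H_m[\ww]$, while $\Vert g_m\Vert\lesssim H_m[\ww]^{1/q}$ by the Lorentz domination, so the partial sums of $\yy_1^{\ast}$ blow up like $H_m[\ww]^{1/q'}$ on $\YY=\XX$. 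The claimed boundedness of $\yy_1^{\ast}$ on $\YY$ is therefore false, and without it you have neither a biorthogonal system nor property~\ref{markulp2}.

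The paper avoids this obstruction by replacing the single helper direction $\xx_1$ with infinitely many helpers $(\xx_{\psi(k)})_{k=1}^\infty$ drawn from the complement of $\eta(\NN)$, and attaching only a \emph{finite} block $D_k=[1+t_{k-1},t_k]$ of the new basis vectors $\yy_j=\xx_{\eta(j)}+(s_j/j)\xx_{\psi(k)}$ to each helper. Then each correction functional $\zz_k^{\ast}=\xx_{\psi(k)}^{\ast}-\sum_{j\in D_k}(s_j/j)\xx_{\eta(j)}^{\ast}$ is a finite sum, hence bounded; the helpers are not in $[\YB]$, so $\YY\subsetneq\XX$ genuinely; and totality follows because any $f\in\YY$ killed by all $\xx_{\eta(j)}^{\ast}$ is then killed by each $\xx_{\psi(k)}^{\ast}$ via $\zz_k^{\ast}$, hence by all of $\XB^{\ast}$. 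This block structure also yields~\ref{notqglps} and~\ref{notslp2} in one stroke: choosing $\Lambda_k=H_{t_k}-H_{t_{k-1}}\to\infty$, for any permutation $\pi$ one selects an initial segment $A_k\subseteq D_k$ under $\pi$ with $\sum_{j\in A_k}1/j\approx\Lambda_k/2$, and the vector $f_k=g_k-h_k$ (with $g_k$ and $h_k$ supported on $A_k$ and $D_k\setminus A_k$) satisfies $\Vert f_k\Vert\lesssim\Lambda_k^{1/q}$ while $\Vert g_k\Vert\gtrsim\Lambda_k$. Your sketch for~\ref{notslp2} only gestures at this; in the single-helper setup there is no analogous block decomposition to carry the argument through for an arbitrary permutation.
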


\begin{proof}
Choose a subsequence $\left(\xx_{\eta(j)}\right)_{j=1}^{\infty}$ of $\XB=\left(\xx_n\right)_{n=1}^{\infty}$ so that $\NN\setminus\eta(\NN)$ is infinite and the linear operator $T\colon d_{1,q}(\ww)\rightarrow \XX$ given by
\[
T\left(\ee_j\right)= \xx_{\eta(j)}, \quad k\in\NN,
\]
is bounded. Let $\psi\colon\NN\rightarrow\NN$ be the increasing sequence defined by $\psi(\NN)=\NN\setminus\eta(\NN)$. Since the harmonic series diverges we can recursively construct an increasing sequence $(t_k)_{k=0}^\infty$ of natural numbers with $t_0=0$ such that, if we put
\[
\Lambda_k=H_{t_k} -H_{t_{k-1}},
\]
then $\lim_k \Lambda_k=\infty$. For each $j\in\NN$ define $\yy_j=\xx_{\eta(j)}+\zz_j$, where
\[
\zz_j = \frac{s_j}{j} \xx_{\psi(k)}, \quad k\in\NN,\; t_{k-1} <j \le t_k.
\]
It is clear that $(\yy_j,\xx_{\eta(j)}^{\ast})_{j=1}^\infty$ is a biorthogonal system. Thus, to see that $\YB:=\left(\yy_j\right)_{j=1}^{\infty}$ satisfies \ref{bidemocraticlp2} it suffices to prove that, if $\ZB=(\zz_j)_{j=1}^\infty$, $\udf[\ZB,\XX](m) \lesssim s_m$ for $m\in\NN$. Set $C_1=\sup_n \Vert \xx_n\Vert$. For every $A\subseteq\NN$ with $|A|=m<\infty$ and $\varepsilon\in\EE_A$ we have
\[
\Vert \Ind_{\varepsilon,A}[\ZB,\XX]\Vert \le C_1 \sum_{j\in A} \frac{s_j}{j} \le C_1\sum_{j=1}^m \frac{s_j}{j} \lesssim s_m.
\]

Let us see that $\YB$ is a total basis of $\YY=[\YB]$. Set
\[
\zz_k^{\ast}=\xx_{\psi(k)}^{\ast} -\sum_{j=1+t_{k-1}}^{t_k} \frac{s_j}{j} \, \xx^{\ast}_{\eta(j)}, \quad k\in\NN.
\]
We have $\zz_k^{\ast}(\yy_j)=0$ for all $j$ and $k\in\NN$. Therefore $\zz_k^{\ast}(f)=0$ for all $f\in\YY$ and $k\in\NN$. Pick $f\in\YY$ and suppose that $\xx_{\eta(j)}^{\ast}(f)=0$ for all $j\in\NN$. We infer that $\xx_{\psi(k)}^{\ast}(f)=0$ for all $k\in\NN$. Since $\XB$ is a total basis, $f=0$.

To prove that $\YB$ is neither a quasi-greedy basis nor a Schauder basis in any ordering, we pick a permutation $\pi$ of $\NN$. For each $k\in\NN$, choose $A_k\subseteq D_k:=[1+t_{k-1},t_k]\cap \NN$ minimal with the properties
\[
l:=\max( \pi^{-1}(A_k)) <\min (\pi^{-1}(D_k\setminus A_k))
\;\text{and}\;
\Gamma_k:=\sum_{j\in A_k} \frac{1}{j} > \frac{\Lambda_k}{2}.
\]
By construction,
\[
\frac{\Lambda_k}{2} \ge \Gamma_k-\frac{1}{\pi(l)} \ge \Gamma_k-1.
\]
Then, if we set
\[
\Theta_k:=\sum_{j\in D_k\setminus A_k} \frac{1}{j}=\Lambda_k-\Gamma_k,
\]
we have
$\Gamma_k-\Theta_k=-\Lambda_k+2\Gamma_k\in(0,2]$. Also by construction, if we set
\[
g_k=\sum_{j\in A_k} \frac{1}{s_j} \yy_j, \quad h_k=\sum_{j\in D_k\setminus A_k} \frac{1}{s_j} \yy_j, \quad k\in\NN,
\]
then $g_k$ is a partial-sum projection of $f_k:=g_k-h_k$ with respect to the rearranged basis $(\yy_{\pi(i)})_{i=1}^\infty$. Moreover, in the case when $\pi$ is the identity map, $g_k$ is a greedy projection of $f_k$. On one hand, if we set
\[
f_k'= \sum_{j\in A_k} \frac{1}{s_j} \ee_j- \sum_{j\in D_k\setminus A_k} \frac{1}{s_j} \ee_j,\\
\]
we have $f_k=T(f_k') + (\Gamma_k-\Theta_k) \xx_{\psi(k)}$ for all $k\in\NN$. By Lemma~\ref{lem:LorentzLRP}~\ref{LorentzLRP:3},
\[
\Vert f_k'\Vert_{d_{1,q}(\ww)} = \left\Vert \sum_{j=1+t_{k-1}}^{t_k} \frac{1}{s_j} \ee_j\right\Vert_{d_{1,q}(\ww)}\lesssim
\max\{ 1, \Lambda_k^{1/q}\}\approx \Lambda_k^{1/q}.
\]
Hence, $\Vert f_k\Vert \lesssim \Lambda_k^{1/q}$ for $k\in\NN$. On the other hand, since $\xx_{\psi(k)}^{\ast} (g_k)=\Gamma_k$, we have
\[
\Lambda_k<2\Gamma_k\le 2 C_2 \Vert g_k\Vert
\]
where $C_2=\sup_n \Vert \xx_n^{\ast}\Vert$. Summing up,
\[
\frac{ \Vert g_k\Vert}{\Vert f_k\Vert} \gtrsim \Lambda_k^{1/q'} \xrightarrow[k\to \infty]{}\infty. \qedhere
\]
\end{proof}

\begin{corollary}\label{corollaryl2}
There is a bidemocratic total basis of $\ell_2$ that is not Schauder under any rearrangement of the terms nor quasi-greedy.
\end{corollary}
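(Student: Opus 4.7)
The plan is to obtain Corollary~\ref{corollaryl2} as an immediate application of Theorem~\ref{theoremLp2}, with $\XX=\ell_2$ and $\XB$ the canonical unit vector basis. I would set $\ww=(n^{-1/2})_{n=1}^\infty$, whose primitive weight satisfies $s_m=\sum_{n=1}^m n^{-1/2}\approx m^{1/2}$. Since $d_{1,q}(\ww)$ depends only on the equivalence class of the primitive weight, I may replace $(s_m)$ by the genuinely equivalent weight $m^{1/2}$; then $s_m/m=m^{-1/2}$ is non-increasing, and $(s_m)$ has the LRP with $b=4$ because $2m^{1/2}\le (4m)^{1/2}$ for all $m\in\NN$.

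Next I would check the remaining hypotheses imposed on $\XB$. The canonical basis of $\ell_2$ is $1$-unconditional and $1$-symmetric, so in particular it is a total Schauder basis which is bidemocratic with $\udf[\XB,\ell_2](m)=m^{1/2}\approx s_m$. The key observation is the numerology that selects the parameter $q$: using the identification $d_{1,q}(\ww)=\ell_{p,q}$ recalled in the introduction for $\ww=(n^{1/p-1})$, the choice $p=q=2$ gives $d_{1,2}(\ww)=\ell_{2,2}=\ell_2$. Thus $\XB$ itself—and hence every subsequence of $\XB$—is equivalent to, and in particular dominated by, the unit vector basis of $d_{1,q}(\ww)$ for $q=2>1$.

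Theorem~\ref{theoremLp2} then produces a subspace $\YY\subseteq\ell_2$ equipped with a basis $\YB=(\yy_j)_{j=1}^\infty$ that is bidemocratic, total, not quasi-greedy, and not Schauder in any ordering. To turn this into a basis of $\ell_2$ itself, I would invoke the classical fact that every infinite-dimensional closed subspace of $\ell_2$ is isomorphic to $\ell_2$: fixing an isomorphism $U\colon\YY\to\ell_2$ and transporting $\YB$ to $(U\yy_j)_{j=1}^\infty$ yields a basis of $\ell_2$ that inherits each of the four listed properties, which is exactly what the corollary asserts.

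The deduction is entirely routine; the only step requiring a moment's thought is the choice $p=q=2$ that forces $d_{1,q}(\ww)$ to collapse to $\ell_2$. All of the substantive work—in particular the construction of the vectors $\yy_j$ and the verification that no reordering turns $\YB$ into a Schauder basis—has already been absorbed into Theorem~\ref{theoremLp2}, so no further obstacle arises.
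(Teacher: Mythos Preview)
Your proposal is correct and matches the paper's implicit argument: the corollary is stated without proof immediately after Theorem~\ref{theoremLp2}, and the intended derivation is exactly the one you give---apply Theorem~\ref{theoremLp2} to the canonical basis of $\ell_2$ with the power weight giving $s_m\approx m^{1/2}$ and $q=2$ (so that $d_{1,2}(\ww)=\ell_{2,2}=\ell_2$), then transport the resulting basis from the subspace $\YY$ back to $\ell_2$ via the fact that every infinite-dimensional closed subspace of $\ell_2$ is isomorphic to $\ell_2$. The only cosmetic point is that when you ``replace $(s_m)$ by $m^{1/2}$'' you are really choosing the weight $\ww'$ with primitive sequence exactly $(m^{1/2})_{m=1}^\infty$, which is legitimate since the Lorentz space and the hypothesis $\udf[\XB,\ell_2](m)\approx s_m$ depend only on the equivalence class of the primitive weight.
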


Let us notice that the bases we construct to prove Theorem~\ref{thm:BDTotalNotQG} are \emph{not} Schauder bases. As the TGA does no depend on the particular way we reorder the basis, whereas being a Schauder basis does, studying the TGA within the framework of Schauder bases is somehow unnatural. Nonetheless, Schauder bases have provided a friendly framework to develop the greedy approximation theory with respect to bases since its beginning at the turn of the century. In fact, it is nowadays unknown even whether certain results involving the TGA work outside the framework of Schauder bases (see, e.g., \cite{Berna2020})! Hence, in connection with our discussion it is natural to wonder whether bidemocratic Schauder bases are quasi-greedy. We close this section by providing a negative answer to this question too.

\begin{theorem}\label{thm:BDSchauderNotQG}
There is a Banach space with a bidemocratic Schauder basis which is not quasi-greedy.
\end{theorem}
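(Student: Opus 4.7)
My plan is to adapt the construction of Theorem~\ref{theoremLp2} by enlarging the basis with auxiliary vectors, in a way that upgrades it to a Schauder basis while preserving bidemocracy and the failure of quasi-greediness. First I would fix a bidemocratic Schauder basis $\XB=(\xx_n)_{n=1}^\infty$ of a Banach space $\XX$ (for instance, a DKK-type Schauder basis of $\ell_p\oplus\ell_p$ with fundamental function $\approx m^{1/p}$) possessing two disjoint infinite subsequences $(\xx_{\eta(j)})_{j=1}^\infty$ and $(\xx_{\psi(k)})_{k=1}^\infty$, the first of which is dominated by the unit vector basis of $d_{1,q}(\ww)$ for some $q>1$ and some weight $\ww$ whose primitive weight $(s_m)_{m=1}^\infty$ is unbounded, has the LRP, and satisfies $s_m\approx\udf[\XB,\XX](m)$.

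Choosing $(t_k)_{k=0}^\infty$ as in Theorem~\ref{theoremLp2}, I would set $\yy_j=\xx_{\eta(j)}+(s_j/j)\xx_{\psi(k)}$ for $t_{k-1}<j\le t_k$, and then---the key difference with Theorem~\ref{theoremLp2}---include the auxiliary vectors $\xx_{\psi(k)}$ themselves in the basis, ordering the combined system as
\[
\yy_1,\ldots,\yy_{t_1},\xx_{\psi(1)},\yy_{t_1+1},\ldots,\yy_{t_2},\xx_{\psi(2)},\yy_{t_2+1},\ldots
\]
with biorthogonal functionals $(\xx_{\eta(j)}^{\ast})_j$ alongside $\uu_k^{\ast}:=\xx_{\psi(k)}^{\ast}-\sum_{t_{k-1}<j\le t_k}(s_j/j)\xx_{\eta(j)}^{\ast}$. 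A telescoping computation shows that this system reproduces every element of $\YY:=\overline{\spn}(\xx_n\colon n\in\eta(\NN)\cup\psi(\NN))$. Bidemocracy is then verified along the lines of Theorem~\ref{theoremLp2}, using the LRP estimate $\sum_{j\le m}(s_j/j)\approx s_m$ to control both the $\yy_j$-perturbations and the dual $\uu_k^{\ast}$-corrections, and the failure of quasi-greediness follows by testing on the same vectors $u_m$ used in that proof, which yield $\gc_m[\YB,\YY]\gtrsim(\log m)^{1/q'}$.

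The main obstacle is the Schauder property. A direct expansion shows that within block $k$ the partial sum of $f\in\YY$ along the order above---after $\yy_1,\ldots,\yy_n$ with $t_{k-1}<n\le t_k$ but before including $\xx_{\psi(k)}$---equals a genuine partial sum of $f$ with respect to $\XB$ plus a residual term $R_n(f)\xx_{\psi(k)}$, where $R_n(f)=\sum_{j=1+t_{k-1}}^n(s_j/j)\xx_{\eta(j)}^{\ast}(f)$; one must bound $|R_n(f)|$ by $\|f\|$ uniformly in $n$ and $k$. By duality, the Lorentz domination of $(\xx_{\eta(j)})$ translates into a norm estimate on $\sum(s_j/j)\xx_{\eta(j)}^{\ast}$, and an application of the dual analogue of Lemma~\ref{lem:LorentzLRP}\ref{LorentzLRP:3} provides the required uniform bound provided $q$ and $\ww$ are tuned so that $(s_j/j)$ sits in the correct dual Lorentz class. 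Arranging this compatibly with the remaining constraints on $\XB$ (bidemocracy of the initial basis, LRP, doubling, unboundedness of $(s_m)$) is the critical technical step; once it is achieved, $\YB$ is the desired bidemocratic non-quasi-greedy Schauder basis of the Banach space $\YY$.
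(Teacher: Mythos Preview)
Your approach has a genuine and fatal gap: in the ordering you propose, the very projections that witness the failure of quasi-greediness are themselves partial sums. Concretely, the test vector $f_k=g_k-h_k$ from Theorem~\ref{theoremLp2} is supported (with respect to your enlarged system) on $\{\yy_j : j\in D_k\}$, and its greedy set of cardinality $|A_k|$ is $\{\yy_j : t_{k-1}<j\le t_{k-1}+|A_k|\}$, which is an initial segment in your ordering (the coefficient of $f_k$ at $\xx_{\psi(k)}$ is $\uu_k^{\ast}(f_k)=0$). Hence $g_k$ is exactly a partial-sum projection of $f_k$, so the same computation $\Vert g_k\Vert/\Vert f_k\Vert\gtrsim\Lambda_k^{1/q'}\to\infty$ shows your system is \emph{not} Schauder. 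This is precisely the content of part~\ref{notslp2} of Theorem~\ref{theoremLp2}, and inserting the single vector $\xx_{\psi(k)}$ at the end of block $k$ does nothing to obstruct it. No tuning of $q$ or $\ww$ helps, because the obstruction is structural: any ordering in which the bad greedy projections are initial segments cannot be Schauder. Moreover, in the natural $\ell_p$ setting your dual functionals are not even bounded, since $\Vert\uu_k^{\ast}\Vert\ge\bigl(\sum_{j\in D_k}(s_j/j)^{p'}\bigr)^{1/p'}-1=\Lambda_k^{1/p'}-1\to\infty$, so the system fails to be a basis in the sense of the paper and bidemocracy cannot hold.

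The paper proceeds by an entirely different route. Rather than perturbing an existing basis, it builds a Banach space from scratch in the style of Konyagin--Temlyakov: on $c_{00}$ one defines $\Vert f\Vert=\max\{\Vert f\Vert_p,\Vert f\Vert_{\maltese}\}$, where $\Vert\cdot\Vert_{\maltese}$ is a carefully designed seminorm given by suprema of weighted partial sums over disjoint blocks. The unit vector system is then automatically a \emph{monotone} Schauder basis, and the weights are chosen so that the basis is $1$-bidemocratic with $\Vert\Ind_{\varepsilon,A}\Vert=|A|^{1/p}$ while specific sign patterns inside the blocks force $\gc_m\gtrsim(\log m)^{1/p'}$. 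The point is that here the Schauder property comes for free from the definition of the norm, and the subtle work goes into arranging the blocks so that bidemocracy and non-quasi-greediness coexist.
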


The proof of Theorem~\ref{thm:BDSchauderNotQG} relies on a construction that has its roots in \cite{KoTe1999}, where it was used to build a conditional quasi-greedy basis. Variants of the original idea of Konyagin and Telmyakov have appeared in several papers with different motivations (see \cites{GHO2013,BBGHO2018,AABW2021,Oikhberg2018}). Prior to tackling the proof we introduce a quantitative version of \cite{DKKT2003}*{Theorem 5.4}.

\begin{theorem}\label{thm:dualQuantitative}
Let $\XB$ be a bidemocratic basis of a quasi-Banach space $\XX$. Then
\[
\gc_m[\XB^{\ast},\XX^{\ast}]\lesssim \gc_m[\XB,\XX], \quad m\in\NN.
\]
In particular, if $\XB$ is a Schauder basis of a Banach space, then
\[
\gc_m[\XB^{\ast},\XX^{\ast}]\approx \gc_m[\XB,\XX], \quad m\in\NN.
\]
\end{theorem}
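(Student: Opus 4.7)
The plan is to carry out a quantitative refinement of the proof of \cite{DKKT2003}*{Theorem 5.4}, tracking the quasi-greedy parameters explicitly; the ``in particular'' statement will then follow from the first inequality by a duality passage through the bidual.

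For the first inequality, I would fix $f^{\ast}\in B_{\XX^{\ast}}$ and a greedy set $A$ of $f^{\ast}$ with $|A|=m$, and aim to estimate the norm of $P_A^{\ast}(f^{\ast}):=\sum_{n\in A}f^{\ast}(\xx_n)\xx_n^{\ast}$. The biorthogonality identity
\[
\langle P_A^{\ast}(f^{\ast}),g\rangle=\langle f^{\ast},S_A(g)\rangle
\]
reduces the task to bounding $|\langle f^{\ast},S_A(g)\rangle|$ uniformly for $g\in B_\XX$. Given such a $g$, the idea is to produce a perturbation $\tilde g$ that keeps the coefficients of $g$ on $A$ unchanged---so that $S_A(\tilde g)=S_A(g)$---while damping the coefficients off $A$ until $A$ becomes a greedy set of $\tilde g$. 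Applying the quasi-greedy inequality for $\XB$ to $\tilde g$ then yields
\[
|\langle f^{\ast},S_A(g)\rangle|=|\langle f^{\ast},S_A(\tilde g)\rangle|\le \|f^{\ast}\|\,\|S_A(\tilde g)\|\le \gc_m[\XB,\XX]\,\|\tilde g\|,
\]
and the full strength of bidemocracy---which, by \cite{AABW2021}*{Proposition 5.7}, makes both $\XB$ and $\XB^{\ast}$ truncation quasi-greedy and allows one to invoke Lemma~\ref{lem:truncation quasi-greedyQU} for both---is used to extract the uniform control $\|\tilde g\|\lesssim \|g\|$ needed to finish.

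For the ``in particular'' statement, I would observe that, under the Schauder/Banach assumption, $\XB^{\ast}$ is a basic sequence in $\XX^{\ast}$, and it is again bidemocratic since the defining estimate $\varphi(m)\varphi^{\ast}(m)\lesssim m$ is symmetric in $\XB$ and $\XB^{\ast}$. Applying the inequality just proven to $\XB^{\ast}$ yields a bound for the quasi-greedy parameters of its dual basic sequence $\XB^{\ast\ast}$ by those of $\XB^{\ast}$, and the canonical isometric embedding $\XX\hookrightarrow \XX^{\ast\ast}$ identifies $\XB$ with $\XB^{\ast\ast}$, so that $\gc_m[\XB,\XX]\lesssim \gc_m[\XB^{\ast},\XX^{\ast}]$. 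Combining this with the first inequality yields the two-sided equivalence.

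The main obstacle will be the norm control $\|\tilde g\|\lesssim \|g\|$ in the first part. The naive estimate $\|\tilde g\|\le \|g\|+\|S_A(g)\|$ would absorb a factor of the unconditionality parameter $\unc_m[\XB,\XX]$, which is in general much larger than $\gc_m[\XB,\XX]$. Avoiding this loss and extracting only a single power of $\gc_m[\XB,\XX]$ is precisely where bidemocracy---as opposed to mere democracy of both $\XB$ and $\XB^{\ast}$---must be exploited, through the interplay between the dual truncation quasi-greediness and the bidemocratic balance $\udf[\XB]\,\udf^{\ast}[\XB^{\ast}]\lesssim m$.
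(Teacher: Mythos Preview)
Your duality set-up and your treatment of the ``in particular'' clause are correct and match the paper's argument (the paper simply invokes the equivalence of $\XB$ and $\XB^{**}$ for Schauder bases, i.e., \cite{AlbiacKalton2016}*{Corollary 3.2.4}).

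However, the perturbation scheme for the first inequality has a genuine gap, and it is not merely a missing detail: the step $\|\tilde g\|\lesssim\|g\|$ cannot be achieved in the way you describe. Any $\tilde g$ with $S_A(\tilde g)=S_A(g)$ and $A$ a greedy set of $\tilde g$ must have all off-$A$ coefficients bounded by $d':=\min_{n\in A}|\xx_n^{\ast}(g)|$. But if $B$ is a greedy set of $g$ with $|B|=m$ and $A\ne B$, then $d'\le d:=\min_{n\in B}|\xx_n^{\ast}(g)|$, while the coefficients of $g$ outside $A\cup B$ can be as large as $d$; so building $\tilde g$ forces you to tamper with infinitely many coordinates (or at least with $S_{B\setminus A}(g)$), and any such tampering essentially reintroduces a projection term whose norm you do not control by $\gc_m$. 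You correctly diagnose the danger in your last paragraph, but bidemocracy does not rescue a uniform bound $\|\tilde g\|\lesssim\|g\|$.

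The argument that actually works (and is what the proof of \cite{DKKT2003}*{Theorem 5.4} gives, quantitatively) does \emph{not} try to bound $\|S_A(g)\|$; it bounds only the pairing $f^{\ast}(S_A(g))$ for this particular $f^{\ast}$. Introduce a greedy set $B$ of $g$ with $|B|=m$ and write
\[
f^{\ast}(S_A(g))=f^{\ast}(S_B(g))-f^{\ast}(S_{B\setminus A}(g))+f^{\ast}(S_{A\setminus B}(g)).
\]
The first term is bounded by $\gc_m[\XB,\XX]\,\|g\|$ since $B$ is greedy for $g$. For the second, the coefficients $f^{\ast}(\xx_n)$, $n\in B\setminus A$, all satisfy $|f^{\ast}(\xx_n)|\le c:=\min_{k\in A}|f^{\ast}(\xx_k)|$; Lemma~\ref{constantupperdemfunct} gives $\bigl\|\sum_{n\in B\setminus A}f^{\ast}(\xx_n)\,\xx_n^{\ast}\bigr\|\lesssim c\,\udf[\XB^{\ast}](m)$, and truncation quasi-greediness of $\XB^{\ast}$ (via \cite{AABW2021}*{Proposition 5.7}) together with super-democracy yield $c\,\udf[\XB^{\ast}](m)\lesssim\|f^{\ast}\|$. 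The third term is handled symmetrically: $|\xx_n^{\ast}(g)|\le d$ on $A\setminus B$, so $\|S_{A\setminus B}(g)\|\lesssim d\,\udf[\XB](m)\lesssim\|g\|$. The crucial point is that bidemocracy is exploited on the two \emph{cross terms}, where one factor of the bilinear sum is small, not on a global norm estimate for a perturbed $\tilde g$.
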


\begin{proof}
The proof of \cite{DKKT2003}*{Theorem 5.4} (see also \cite{AABW2021}*{Proof of Proposition 5.7}) yields the first estimate. To see the equivalence, we use that $\XB^{**}$ is equivalent to $\XB$ (see \cite{AlbiacKalton2016}*{Corollary 3.2.4}).
\end{proof}

\begin{proposition}\label{theorembidemocraticnotqG}
Let $1<p<\infty$. There is a Banach space $\XX$ with a monotone Schauder basis $\XB$ with the following properties:
\begin{enumerate}[label=(\roman*),leftmargin=*,widest=ii]
\item\label{1bidem}For all finite sets $A\subseteq \NN$ and all $\varepsilon\in \EE_A$,
\[
\Vert \Ind_{\varepsilon A}\Vert=\left|A\right|^{1/p}
\;\text{and}\;
\Vert\Ind_{\varepsilon A}^{\ast}\Vert=\left|A\right|^{1/p'},
\]
where $1/p+1/p^{\prime}=1$. Therefore, $\XB$ is $1$-bidemocratic.
\item\label{notnQG} Neither $\XB$ nor $\XB^{\ast}$ are quasi-greedy. Quantitatively,
\[
\gc_m\approx\gc_m^{\ast}\approx \unc_m\approx \unc_m^{\ast}\approx \left(\log{m}\right)^{1/p'} , \quad m\in\NN,\; m\ge 2.
\]
\end{enumerate}
\end{proposition}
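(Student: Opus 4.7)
The plan is to construct $\XX$ in the style of Konyagin--Temlyakov \cite{KoTe1999} (and the variants cited in the paper) as an $\ell_p$-sum $\bigl(\bigoplus_{k\ge 1} X_k\bigr)_{\ell_p}$ of finite-dimensional pieces $X_k=(\FF^{N_k},\|\cdot\|_k)$ with $N_k\uparrow\infty$. Each $\|\cdot\|_k$ is a modification of the $\ell_p$-norm on $\FF^{N_k}$ designed so that (a) the canonical basis of $X_k$ is $1$-bidemocratic with fundamental function $m\mapsto m^{1/p}$, and (b) a specific greedy projection on $X_k$ is blown up by a factor of order $(\log N_k)^{1/p'}$. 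The natural form of such a norm is
\[
\|f\|_k=\max\Bigl\{\|f\|_{\ell_p},\;\sup_{B\in\mathcal{B}_k}\frac{|\sum_{j\in B}f(j)|}{w_k(|B|)}\Bigr\},
\]
where $\mathcal{B}_k$ is a carefully chosen family of subsets of $[1,N_k]$ and $w_k(|B|)\ge |B|^{1/p'}$ is a normalizing weight. The family $\mathcal{B}_k$ must be rich enough to capture the conditionality of a chain of bad vectors, yet sparse enough that the weight condition forces $\max_B|\sum_{j\in B}\varepsilon_j|/w_k(|B|)\le |A|^{1/p}$ for every $A\subseteq[1,N_k]$ and $\varepsilon\in\EE_A$.

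Once the blocks are built, the aggregated basis $\XB$ of $\XX$, obtained by concatenating the canonical bases of the $X_k$, is a monotone Schauder basis because the $\ell_p$-sum structure preserves monotonicity of the partial-sum operators. For item \ref{1bidem}, the $\ell_p$-sum structure together with the per-block identity $\|\Ind_{\varepsilon,B}\|_k=|B|^{1/p}$ yields $\|\Ind_{\varepsilon,A}\|_\XX=|A|^{1/p}$, and the dual identity $\|\Ind^{\ast}_{\varepsilon,A}\|=|A|^{1/p'}$ follows from H\"older's inequality in $\ell_p$ combined with the trivial bound $|A|\le \|\Ind_{\varepsilon,A}\|\,\|\Ind^{\ast}_{\varepsilon,A}\|$. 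For item \ref{notnQG}, the lower bound $\gc_m\gtrsim (\log m)^{1/p'}$ is obtained by lifting the bad vector of the block $X_k$ with $N_k\approx m$ to $\XX$; the matching upper bound $\unc_m\lesssim(\log m)^{1/p'}$ comes from a dyadic decomposition of an arbitrary $A$ with $|A|=m$ and a two-exponent H\"older estimate coupling the $\ell_p$-part with the interval-sum part. The equivalences $\gc_m^{\ast}\approx\gc_m$ and $\unc_m^{\ast}\approx\unc_m$ then drop out of Theorem~\ref{thm:dualQuantitative} together with the near-self-dual nature of the norm on each $X_k$.

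The main technical obstacle is calibrating $\mathcal{B}_k$ and $w_k$ precisely enough to achieve the sharp exponent $1/p'$, rather than the exponent $1$ that Theorem~\ref{thm:BidCond} would yield in the generic Banach-space setting. Concretely, one has to construct a chain of vectors $f_m\in X_k$ (with $m\approx N_k$) of the form $f_m=g_m-h_m$, where $g_m$ is supported on a greedy $m$-set and realizes a maximizer of the interval-sum functional of size $(\log m)^{1/p'}$, while the corrector $h_m$, supported outside the greedy set, ensures $\|f_m\|_k\approx 1$ by simultaneously canceling the interval-sum functional outside the greedy set and contributing compensating $\ell_p$-mass. Balancing these cancellations against the rigid $\ell_p$-norm constraint is what pins down the precise relationship between $\mathcal{B}_k$, $w_k$, and the harmonic weights appearing in $g_m$.
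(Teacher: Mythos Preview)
Your high-level framework (a Konyagin--Temlyakov norm $\max\{\|\cdot\|_{\ell_p},\text{functional}\}$ on blocks) is the right one, but the specific functional you write down collapses. With the functional part $\sup_{B\in\mathcal{B}_k}\bigl|\sum_{j\in B}f(j)\bigr|/w_k(|B|)$ and the constraint $w_k(|B|)\ge|B|^{1/p'}$, H\"older's inequality gives
\[
\frac{\bigl|\sum_{j\in B}f(j)\bigr|}{w_k(|B|)}\le\frac{|B|^{1/p'}\,\|f\|_p}{w_k(|B|)}\le\|f\|_p,
\]
so the functional part is always dominated by the $\ell_p$-part and $\|\cdot\|_k$ is just $\|\cdot\|_{\ell_p}$; the resulting basis is unconditional. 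The constraint $w_k(|B|)\ge|B|^{1/p'}$ cannot be relaxed within your ansatz: taking $A=B\in\mathcal{B}_k$ and $\varepsilon\equiv 1$ in the requirement $\|\Ind_{\varepsilon,A}\|_k=|A|^{1/p}$ forces it. The same collapse kills the bad-vector scheme, since $g_m$ and $h_m$ have disjoint supports and hence $\|f_m\|_p^p=\|g_m\|_p^p+\|h_m\|_p^p\ge\|g_m\|_p^p$, so $\|g_m\|/\|f_m\|\le 1$.

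The paper's construction fixes this by replacing set-size normalization with \emph{weighted} partial sums. It works globally on $c_{00}$ (not as an $\ell_p$-sum of blocks) via $\|f\|=\max\{\|f\|_p,\|f\|_\maltese\}$ with
\[
\|f\|_\maltese=\frac{1}{p}\sup_{m\in\NN,\;l\in A_m}\Bigl|\sum_{n\in A_m,\,n\le l}a_n\, b_{m,n}\Bigr|,
\]
where $b_{m,n}=d_{m,n}^{-1/p'}$ for a carefully shuffled family $(d_{m,n})$ of distinct positive integers. The two arithmetic features that make this work are $\sum_{n\in A}b_{m,n}\le p\,|A|^{1/p}$ (which secures $\|\Ind_{\varepsilon,A}\|=|A|^{1/p}$) and $\sum_{n\in A}b_{m,n}^{p'}\le H_{|A|}$ (which, via H\"older, yields $\unc_m\lesssim(\log m)^{1/p'}$). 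The lower bound is realized by a vector with coefficients $a_{m,n}=\pm d_{m,n}^{-1/p}$, so that the products $a_{m,n}b_{m,n}=\pm 1/d_{m,n}$ telescope on initial segments (keeping $\|f_m\|_\maltese$ bounded) while the greedy projection onto the positive-sign set picks up the full harmonic sum $H_m/p$ in the $\maltese$-norm. The point is that \emph{varying weights inside the functional} decouple the $\ell_1$-type control needed for bidemocracy from the $\ell_{p'}$-type control that governs conditionality; a normalization depending only on $|B|$ ties the two together and forces both to be trivial.
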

\begin{proof}
Put
\[
\DD:=\{(m,k)\in\NN^2 \colon 1\le k \le m\},
\]
where the elements are taken in the lexicographical order. Appealing to Lemma~\ref{lem:Jar} we recursively construct a family $(r_{m,k},s_{m,k})_{(m,k)\in\DD}$ in $\NN^2$ such that
\begin{align}
m+1<r_{m,k}<s_{m,k},\quad&1\le k\le m,\label{movetotheright1}\\
s_{m,k}<r_{m,k+1}, \quad &1\le k<m,
\;\text{and}\label{movetotheright2}\\
\frac{1}{k}- \frac{1}{m}\le T_{m,k}:= \sum_{j=r_{m,k}}^{s_{m,k}}\frac{1}{j}<\frac{1}{k},\quad &1\le k\le m. \label{conditionclosesums}
\end{align}

Next, we choose a sequence $( A_m)_{m=1}^\infty$ of integer intervals contained in $\NN$ so that $\max(A_m)<\min(A_{m+1})$ for all $m\in\NN$, and
\begin{equation}\label{separatingthesets}
|A_m|=2m+\sum_{k=1}^{m}s_{m,k}-r_{m,k}.
\end{equation}
Let
\[
i_{m,k}=\min A_m+\sum_{j=1}^{k-1}\left(s_{m,j}-r_{m,j}+2\right), \quad (m,k)\in\DD.
\]
Fix $m\in\NN$. For each $n\in A_m$ there are unique integers $1\le k \le m$ and $-1\le t \le s_{m,k}-r_{m,k}$ so that
$n=i_{m,k}+1+t$. Let us set
\[
(d_{m,n},\varepsilon_{m,n})=
\begin{cases}
(k,1) &\text{ if } t=-1,\\
(r_{m,k}+t,-1) &\text{otherwise.}
\end{cases}
\]
Consider the subset of $\NN$ given by
\begin{equation*}
B_m=\{ i_{m,k} \colon 1\le k \le m\}.
\end{equation*}
The family $(d_{m,n})_{n\in B_m}$ is increasing. By \eqref{movetotheright2}, $(d_{m,n})_{n\in A_m\setminus B_m}$ is also increasing, and by \eqref{movetotheright1},
\begin{equation}\label{eq:BlGreedy}
\max_{n\in B_m} d_{m,n} < \min_{n\in A_m\setminus B_m} d_{m,n}.
\end{equation}
Set $b_{m,n}=d_{m,n}^{-1/p'}$ for $m\in\NN$ and $n\in A_m$. Since the family $(d_{m,n})_{n\in A_m}$ consists of distinct positive integers, for each $m\in \NN$ and $A\subseteq A_m$ we have
\begin{align}
\sum_{n\in A}b_{m,n}
&\le \sum_{n=1}^{\left|A\right|}n^{-1/p'}
\le p \left|A\right|^{1/p},
\;\text{and}\label{firstboundforfundamentalfunction}\\
\sum_{n\in A}b_{m,n}^{p'}
&\le H_{|A|},
\label{lastestimate}
\end{align}
where, as we said, $H_m$ denotes the $m$th harmonic number. Once the family $(b_{m,n})_{m\in\NN,n\in A_m}$ has been constructed, we define $\Vert\cdot\Vert_{\maltese}$ on $c_{00}$ by
\[
\left\Vert(a_n)_{n=1}^\infty\right\Vert_{\maltese}=\frac{1}{p}\sup_{\substack{m\in\NN\\ l\in A_m}}\left|\sum_{\substack{n\in A_m\\n\le l}}a_{n}b_{m,n} \right|.\label{Snorm0}
\]
Since $\max(A_m)<\min(A_{m+1})$ for all $m\in \NN$, we have that $\Vert f\Vert_{\maltese}<\infty$ for all $f\in c_{00}$, so that $\Vert\cdot\Vert_{\maltese}$ is a semi-norm. Let $\XX$ be the Banach space obtained as the completion of $c_{00}$ endowed with the norm
\[
\Vert f\Vert=\max\left\lbrace \Vert f\Vert_p,\Vert f\Vert_{\maltese}\right\rbrace.\label{norm}
\]
It is routine to check that the unit vector system $\XB$ is a monotone normalized Schauder basis of $\XX$ whose coordinate functionals $\XB^{\ast}$ are the canonical projections on each coordinate. It follows from \eqref{firstboundforfundamentalfunction} that
\[
\Vert \Ind_{\varepsilon,A}\Vert_{\maltese}\le \frac{1}{p}\sup_{m\in \NN}\sum_{n\in A\cap A_m} b_{m,n}\le \left|A\right|^{1/p},
\quad |A|<\infty, \; \varepsilon\in\EE_A.
\]
By definition, there is a norm-one linear map from $\XX$ into $\ell_p$ which maps $\XB$ to the unit vector system of $\ell_p$. By duality, there is a norm-one map from $\ell_{p'}$ into $\XX^{\ast}$ which maps the unit vector system of $\ell_{p'}$ to $\XB^{\ast}$.
In particular,
\[
\Vert \Ind_{\varepsilon,A}^{\ast}\Vert \le |A|^{1/p'}, \quad |A|<\infty, \; \varepsilon\in\EE_A.
\]
We infer that \ref{1bidem} holds.

Define $a_{m,n}=\varepsilon_{m,n} d_{m,n}^{-1/p}$, so that $a_{m,n} b_{m,n}={\varepsilon_{m,n}}/{d_{m,n}}$ for $m\in\NN$ and $n\in A_m$. For each $m\in\NN$ set
\[
f_m=\sum_{n\in A_m}a_{m,n}\, \xx_n.
\]
Let $(m,k)\in\DD$ and use the convention $i_{m,m+1}=1+\max(A_m)$. If $i_{m,k}\le l <i_{m,k+1}$ by
construction we have
\[
B_{m,k}(l):=\sum_{n=i_{m,k}}^{l} a_{m,n} b_{m,n}=\frac{1}{k}-\sum_{j=r_{m,k}}^{l-1+r_{m,k}-i_{m,k}} \frac{1}{j}.
\]
Thus, the maximum and minimum values of $B_{m,k}(l)$ on the interval $i_{m,k}\le l <i_{m,k+1}$
are $1/k$ and $1/k-T_{m,k}$, respectively. Since by the right hand-side inequality in \eqref{conditionclosesums}, $1/j-T_{m,j}>0$ for all $1\le j \le m$ we infer that
\[
\|f_m\|_{\maltese}
=\frac{1}{p} \max_{l\in A_m} \sum_{\substack{n\in A_m \\ n\le l}} a_{m,n} b_{m,n}
=\frac{1}{p}\max_{1\le k \le m} \frac{1}{k}+ \sum_{j=1}^{k-1} \left( \frac{1}{j}-T_{m,j}\right).
\]
Using the left hand-side inequality in \eqref{conditionclosesums} we obtain
\[
\|f_m\|_{\maltese}\le \frac{1}{p} \max_{1\le k \le m} \frac{1}{k}+\frac{k-1}{m}=\frac{1}{p}.
\]

We also have
\[
\Vert f_m\Vert_{p}^p=\sum_{k=1}^m\left( \frac{1}{k}+T_{m,k}\right) \le 2 H_m.
\]
Hence, $\Vert f_m\Vert\le 2^{1/p} H_m^{1/p}$ for all $m\in\NN$.

By \eqref{eq:BlGreedy}, $B_m$ is a greedy set of $f_m$. Since every coefficient of $f_m$ is positive on $B_m$,
\[
\Vert S_{B_m}(f_m) \Vert\ge \Vert S_{B_m}(f_m) \Vert_{\maltese}=\frac{1}{p}\sum_{j\in B_m} \frac{1}{d_{m,n}}=\frac{1}{p}H_m.
\]
Summing up,
\[
\frac{\Vert S_{B_m}(f_m)\Vert} {\Vert f_m\Vert}\ge \frac{1}{p \, 2^{1/p}} H_m^{1/p'}, \quad m\in\NN.
\]
Since $|B_m|=m$, this shows that $\g_m\ge p^{-1} 2^{-1/p} H_m^{1/p'}$ for all $m\in\NN$.

By Theorem~\ref{thm:dualQuantitative}, it only remains to obtain the upper estimate for the unconditionality constants of $\XB$. By \eqref{lastestimate} and H\"older's inequality, for all $A\subseteq\NN$ with $|A|\le m$ we have
\[
\Vert S_A( f)\Vert_{\maltese}\le \frac{1}{p}\Vert f\Vert_p \sup_m\left( \sum_{n\in A\cap A_m} |b_{m,n}|^{p'}\right)^{1/p'}
\le \frac{1}{p} H_m^{1/p'} \Vert f\Vert_p.
\]
Hence, $\unc_m\le\max\{1, H_m^{1/p'}/p \}$ for all $m\in\NN$.
\end{proof}

\begin{remark}
Given a basis $\XB$ and an infinite subset $\n$ of $\NN$, we say that $\XB$ is $\n$-quasi-greedy if
\[
\sup\left\lbrace \dfrac{\Vert S_A(f)\Vert}{\Vert f\Vert} \colon f\in\XX,\, A
\;\text{greedy set of}\;
f,\, |A|\in\n\right\rbrace<\infty
\]
(see \cite{Oikhberg2018}). Note that the basis constructed in Proposition~\ref{theorembidemocraticnotqG} is not $\n$-quasi-greedy for any increasing sequence $\n$.
\end{remark}

\begin{remark}\label{remarklp}
The basis $\XB$ in Proposition~\ref{theorembidemocraticnotqG} has a subbasis isometrically equivalent to the unit vector basis of $\ell_p$. Indeed, it is easy to check that $\left(\xx_{i_{m,1}}\right)_{m=1}^\infty$ has this property. The basis $\XB$ also has, as we next show, a block basis isometrically equivalent to the unit vector basis of $c_0$. Let $(A_m)_{m=1}^\infty$, $(B_m)_{m=1}^\infty$ and $(f_m)_{m=1}^\infty$ be as in that proposition, and define
\[
g_m:=S_{B_m}(f_m), \quad h_m=\frac{g_m}{\|g_m\|_{\maltese}}, \quad m\in\NN.
\]
Pick positive scalars $(\varepsilon_k)_{k=1}^\infty$ with $\sum_{k=1}^\infty \varepsilon_k^p=1$.
Since
\[
\lim_{m} \frac{ \Vert g_m\Vert_p}{\Vert g_m\Vert_{\maltese}}=0,
\]
there is a subsequence $\left(g_{m_k}\right)_{k=1}^\infty$ with $\left\Vert g_{m_k}\right\Vert_p\le \varepsilon_k \Vert g_{m_k}\Vert_{\maltese}$ for all $k\in\NN$. Let $f=(a_k)_{k=1}^\infty\in c_{00}$. Since $\supp(h_m)\subseteq A_{m}$ for all $m$, we have
\[
\left\Vert \sum_{k=1}^{\infty}a_k h_{m_k} \right\Vert_{\maltese}
=\max_{k\in\NN} |a_k| \Vert h_{m_k}\Vert_{\maltese}
=\max_{k\in\NN}|a_k|
\]
and
\[
\left\Vert \sum_{k=1}^{\infty}a_k h_{m_k} \right\Vert_p
=\left( \sum_{k=1}^\infty |a_k|^p \Vert h_{m_k}\Vert_p^p\right)^{1/p}
\le \left( \sum_{k=1}^\infty |a_k|^p \varepsilon_k^p \right)^{1/p}
\le \max_{k\in\NN} |a_k|.
\]
Consequently, $\Vert \sum_{k=1}^\infty a_k h_{m_k}\Vert= \max_{k\in\NN} |a_k|$.
\end{remark}

\section{Building bidemocratic conditional quasi-greedy bases}\label{sect:NM}\noindent
Probably, the most versatile method for building conditional quasi-greedy bases is the previously mentioned DKK-method due to Dilworth, Kalton and Kutzarova, which works only in the locally convex setting (i.e., for Banach spaces). It produces conditional almost greedy bases whose fundamental function either is equivalent to $(m)_{m=1}^\infty$ or has both the LRP and the URP. Thus, the DKK-method serves as a tool for constructing Banach spaces with bidemocratic conditional quasi-greedy bases whose fundamental function has both the LRP and the URP. In this section we develop a new method for building conditional bases that allows us to construct bidemocratic conditional quasi-greedy bases with an arbitrary fundamental function.

We write $\XX\oplus\YY$ for the Cartesian product of the quasi-Banach spaces $\XX$ and $\YY$ endowed with the quasi-norm
\[
\Vert (f,g)\Vert=\max\{ \Vert f\Vert, \Vert g\Vert\}, \quad f\in\XX,\, g\in\YY.
\]
Given sequences $\XB=(\xx_n)_{n=1}^\infty$ and $\YB=(\yy_n)_{n=1}^\infty$ in quasi-Banach spaces $\XX$ and $\YY$ respectively, its direct sum is the sequence $\XB\oplus\YB=(\uu_n)_{n=1}^\infty$ in $\XX\oplus\YY$ given by
\[
\uu_{2n-1}=(\xx_n,0), \quad \uu_{2n}=(0,\yy_n), \quad n\in\NN.
\]
If $\XB$ and $\YB$ are bidemocratic bases, and $\udf[\XB,\XX]\approx \udf[\YB,\YY]$, then the basis $\XB\oplus \YB$ of $\XX\oplus \YY$ is also bidemocratic with
\begin{align*}
\udf[\XB\oplus \YB,\XX\oplus \YY]&\approx \udf[\XB,\XX]\approx \udf[\YB,\YY],\\
\g_m[\XB\oplus \YB,\XX\oplus \YY]=&\max\{\g_m[\XB,\XX], \g_m[\YB,\YY]\},\\
\unc_m[\XB\oplus \YB,\XX\oplus \YY]=&\max\{\unc_m[\XB,\XX], \unc_m[\YB,\YY]\}.
\end{align*}

Loosely speaking, we could say that $\XB\oplus \YB$ inherits naturally the properties of $\XB$ and $\YB$. In contrast, `rotating' $\XB\oplus \YB$ gives rise to more interesting situations. In this section we study the `rotated' sequence $\XB\diamond\YB=(\zz_n)_{n=1}^\infty$ in $\XX\oplus\YY$ given by
\[
\zz_{2n-1}=\frac{1}{\sqrt{2}}(\xx_n,\yy_n), \quad \zz_{2n}=\frac{1}{\sqrt{2}}(\xx_n,-\yy_n), \quad n\in\NN.
\]
Note that
\[
\sum_{n=1}^\infty a_n\, \zz_n =\frac{1}{\sqrt{2}}\left(\sum_{n=1}^\infty (a_{2n-1}+a_{2n})\xx_n, \sum_{n=1}^\infty (a_{2n-1}-a_{2n})\yy_n\right),
\]
whenever the series converges.

To deal with bases built using this method, we introduce some notation. Given $A\subseteq\NN$ we set
\[
A^{\plus}=\{2n-1\colon n\in A\}, \quad A^{\minus}=\{2n\colon n\in A\}.
\]
Consider also the onto map $\eta\colon\NN\to\NN$ given by $\eta(n)=\lceil n/2 \rceil$. Note that $\eta^{-1}(A)=A^{\plus}\cup A^{\minus}$ and $\eta(A^{\plus})=\eta( A^{\minus})=A$ for all $A\subseteq\NN$.

Our first auxiliary result is pretty clear and well-known. In its statement we implicitly use the natural identification of $(\XX\oplus\YY)^{\ast}$ with $\XX^{\ast}\oplus\YY^{\ast}$.

\begin{lemma}[cf.\ \cite{AAW2019}*{Theorem 2.6}]\label{lem:dualdiamond}
Suppose that $\XB$ and $\YB$ are bases of $\XX$ and $\YY$ respectively. Then $\XB\diamond\YB$ is a basis of $\XX\oplus\YY$ whose dual basis is $\XB^{\ast}\diamond\YB^{\ast}$. Moreover, if $\XB$ and $\YB$ are Schauder bases, so is $\XB\diamond\YB$.
\end{lemma}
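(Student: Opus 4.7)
The plan is to build $\XB\diamond\YB$ up as a basis in three ingredients---biorthogonality with the proposed dual, norm-boundedness of the primal and dual sequences, and density of the span---and then upgrade to a Schauder basis under the extra hypothesis.

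For the first part, I would write out the candidate dual as $\zz_{2n-1}^{\ast}=\tfrac{1}{\sqrt 2}(\xx_n^{\ast},\yy_n^{\ast})$ and $\zz_{2n}^{\ast}=\tfrac{1}{\sqrt 2}(\xx_n^{\ast},-\yy_n^{\ast})$ via the natural identification $(\XX\oplus\YY)^{\ast}\cong\XX^{\ast}\oplus\YY^{\ast}$, and verify the four pairings $\zz_i^{\ast}(\zz_j)$ by direct calculation: each collapses to $\tfrac{1}{2}(\xx_n^{\ast}(\xx_m)\pm\yy_n^{\ast}(\yy_m))$, and the biorthogonality of $\XB$ and $\YB$ then yields $\delta_{n,m}$ on the diagonal and zero off. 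Norm-boundedness of both $\XB\diamond\YB$ and $\XB^{\ast}\diamond\YB^{\ast}$ is immediate from $\|\zz_{2n-1}\|=\|\zz_{2n}\|=\tfrac{1}{\sqrt 2}\max(\|\xx_n\|,\|\yy_n\|)$ and the analogue for the duals. For density, the inversion identities $(\xx_n,0)=\tfrac{1}{\sqrt 2}(\zz_{2n-1}+\zz_{2n})$ and $(0,\yy_n)=\tfrac{1}{\sqrt 2}(\zz_{2n-1}-\zz_{2n})$ place $\spn(\xx_n)\oplus\spn(\yy_n)$ inside $\spn(\zz_n)$, which is therefore dense in $\XX\oplus\YY$. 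By uniqueness of the biorthogonal dual, this makes $\XB\diamond\YB$ a basis with the claimed dual.

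For the Schauder clause, given $(a,b)\in\XX\oplus\YY$ with Schauder expansions $a=\sum_n\alpha_n\xx_n$ and $b=\sum_n\beta_n\yy_n$, I would set $c_{2n-1}=(\alpha_n+\beta_n)/\sqrt 2$ and $c_{2n}=(\alpha_n-\beta_n)/\sqrt 2$ and observe that the even partial sums collapse cleanly,
\[
\sum_{m=1}^{2N}c_m\zz_m=\Bigl(\sum_{n=1}^{N}\alpha_n\xx_n,\sum_{n=1}^{N}\beta_n\yy_n\Bigr)\xrightarrow[N\to\infty]{}(a,b).
\]
Uniqueness follows by specializing to even partial sums and invoking uniqueness of the $\XB$- and $\YB$-expansions.

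The only delicate point is convergence of the \emph{odd} partial sums, which differ from the subsequent even one by $c_{2N}\zz_{2N}=\tfrac{\alpha_N-\beta_N}{2}(\xx_N,-\yy_N)$, whose norm is $\tfrac{|\alpha_N-\beta_N|}{2}\max(\|\xx_N\|,\|\yy_N\|)$. A priori the Schauder hypothesis only gives $\alpha_N\xx_N\to 0$ and $\beta_N\yy_N\to 0$, which is insufficient. The way around this is to exploit that the paper's definition of a basis bundles norm-boundedness of \emph{both} $\XB$ and $\XB^{\ast}$ together with the pointwise bound $\|\xx_n\|\,\|\xx_n^{\ast}\|\ge|\xx_n^{\ast}(\xx_n)|=1$ coming from biorthogonality; this forces $\XB$ (and similarly $\YB$) to be semi-normalized, so $\alpha_N\to 0$ and $\beta_N\to 0$, and we obtain $c_{2N}\zz_{2N}\to 0$, closing the argument.
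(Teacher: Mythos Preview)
Your proof is correct. The biorthogonality, norm-boundedness, and density checks are routine and accurate, and your handling of the odd partial sums in the Schauder clause is exactly right: the potential cross-term $|\beta_N|\,\|\xx_N\|$ really does need the semi-normalization of $\XB$ and $\YB$, which you correctly extract from the paper's standing hypothesis that both the primal and dual sequences are norm-bounded (together with $\|\xx_n\|\,\|\xx_n^\ast\|\ge 1$).

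There is nothing to compare against in the paper itself: the authors do not supply a proof of this lemma. They label it ``pretty clear and well-known,'' attach a \emph{cf.}\ citation to \cite{AAW2019}*{Theorem 2.6}, and move on. Your write-up therefore fills in precisely the details the paper omits; the only minor streamlining you might consider is to deduce uniqueness in the Schauder clause directly from the already-established biorthogonality (apply $\zz_k^\ast$ to any convergent expansion) rather than via the even-index reduction, but your route is equally valid.
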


\begin{lemma}\label{constantupperdemfunct}
Let $\XB$ be a basis of a quasi-Banach space. There is a constant $C$ such that
\begin{equation*}
\left\Vert \sum_{n=1}^\infty a_n\, \xx_n \right\Vert \le C \udf(m)
\end{equation*}
whenever $|a_n|\le 1$ for all $n\in\NN$ and $a_n\not=0$ for at most $m$ indices. Moreover, if $\XX$ is $p$-Banach space, $0<p\le 1$, we can choose $C=(2^p-1)^{-1/p}$.
\end{lemma}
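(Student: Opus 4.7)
My plan is to reduce the estimate to signed indicator sums of the form $\Ind_{\varepsilon,A}$ by dyadically decomposing the coefficients via their binary expansion. The statement for a $p$-Banach space is the core of the lemma; the general quasi-Banach case then follows by invoking the Aoki--Rolewicz theorem.

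I first handle the $p$-Banach case. Let $A=\{n\in\NN\colon a_n\neq 0\}$, so that $|A|\le m$; for each $n\in A$ set $\varepsilon_n=a_n/|a_n|\in\FF$ (so $|\varepsilon_n|=1$) and expand $|a_n|\in[0,1]$ in binary as $|a_n|=\sum_{j=1}^{\infty}2^{-j}\delta_{n,j}$ with $\delta_{n,j}\in\{0,1\}$. Setting $A_j=\{n\in A\colon \delta_{n,j}=1\}\subseteq A$, the finiteness of $A$ allows me to interchange the sums over $n$ and $j$, obtaining
\[
\sum_{n=1}^{\infty} a_n\,\xx_n=\sum_{j=1}^{\infty} 2^{-j}\,\Ind_{\varepsilon|_{A_j},A_j}[\XB,\XX].
\]
Since $|A_j|\le m$ and $\udf$ is non-decreasing by definition, each term has norm at most $\udf(m)$, so $p$-subadditivity yields
\[
\left\|\sum_{n=1}^{\infty} a_n\,\xx_n\right\|^{p}\le \udf(m)^{p}\sum_{j=1}^{\infty} 2^{-jp}=\frac{\udf(m)^{p}}{2^{p}-1},
\]
which produces the advertised constant $C=(2^{p}-1)^{-1/p}$.

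For a general quasi-Banach space $\XX$ with modulus of concavity $\kappa$, I would invoke the Aoki--Rolewicz theorem to replace the quasi-norm on $\XX$ by an equivalent $p$-subadditive quasi-norm for some $0<p\le 1$ depending on $\kappa$, apply the previous computation under the renorming, and absorb the equivalence constant into $C$.

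There is no serious obstacle here; the only point worth flagging is that the argument goes through verbatim over $\FF=\CC$ because $|a_n|$ is still real and lies in $[0,1]$, while each unimodular phase $\varepsilon_n$ is an admissible entry of $\EE_{A_j}$.
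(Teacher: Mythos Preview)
Your proof is correct. The paper itself does not give an argument but merely cites \cite{AABW2021}*{Corollary~2.3}; your dyadic (binary-expansion) decomposition is exactly the standard argument underlying that cited result, as the appearance of the constant $(2^{p}-1)^{-1/p}=\bigl(\sum_{j\ge 1}2^{-jp}\bigr)^{1/p}$ indicates, and the passage to the general quasi-Banach case via Aoki--Rolewicz is the routine reduction.
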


\begin{proof}
It follows readily from \cite{AABW2021}*{Corollary 2.3}.
\end{proof}

\begin{lemma}\label{lem:SDDiamond}
Suppose that $\XB$ and $\YB$ are bases of $\XX$ and $\YY$ respectively. Then
\[
\udf[\XB\diamond\YB,\XX\oplus\YY]
\le
C\max\{ \udf[\XB,\XX], \udf[\YB,\YY]\}
\]
for some constant $C$ that only depends of the spaces $\XX$ and $\YY$ (and it is $\sqrt{2}$ if $\XX$ and $\YY$ are Banach spaces).
\end{lemma}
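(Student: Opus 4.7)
The plan is to estimate $\left\|\sum_{n \in A} \varepsilon_n\, \zz_n\right\|$ for any finite $A \subseteq \NN$ with $|A| \le m$ and any $\varepsilon \in \EE_A$ by projecting onto $\XX$ and $\YY$ separately and then invoking the super-democracy functions of $\XB$ and $\YB$. First I would split $A$ by parity, setting $B = \{n \in \NN : 2n-1 \in A\}$ and $C = \{n \in \NN : 2n \in A\}$, so that $A = B^{\plus} \cup C^{\minus}$ and $|B| + |C| = |A| \le m$. Using the definition of $\XB \diamond \YB$, I would then write
\[
\sum_{n \in A} \varepsilon_n\, \zz_n = \frac{1}{\sqrt{2}}(f_1, f_2),
\]
with $f_1 = \sum_{n \in B} \varepsilon_{2n-1}\, \xx_n + \sum_{n \in C} \varepsilon_{2n}\, \xx_n \in \XX$ and $f_2 = \sum_{n \in B} \varepsilon_{2n-1}\, \yy_n - \sum_{n \in C} \varepsilon_{2n}\, \yy_n \in \YY$.

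Each of the two summands making up $f_1$ is of the form $\Ind_{\varepsilon',D}[\XB,\XX]$ with $|D| \le |A| \le m$, and hence has norm at most $\udf[\XB,\XX](m)$ by the very definition of the upper super-democracy function. Applying the quasi-triangle inequality in $\XX$ then yields $\|f_1\| \le 2 \kappa_\XX\, \udf[\XB,\XX](m)$, where $\kappa_\XX$ denotes the modulus of concavity of $\XX$; the symmetric estimate $\|f_2\| \le 2 \kappa_\YY\, \udf[\YB,\YY](m)$ holds in $\YY$.

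To conclude, since the quasi-norm on $\XX \oplus \YY$ is the maximum of the component quasi-norms, the factor $1/\sqrt{2}$ in front of $(f_1,f_2)$ yields the overall bound
\[
\left\|\sum_{n \in A} \varepsilon_n\, \zz_n\right\| \le \sqrt{2}\,\max\{\kappa_\XX, \kappa_\YY\}\, \max\{\udf[\XB,\XX](m),\, \udf[\YB,\YY](m)\}.
\]
Taking the supremum over all admissible $A$ and $\varepsilon$ gives the inequality with a constant depending only on the spaces, which specializes to $\sqrt{2}$ when $\XX$ and $\YY$ are Banach spaces. I do not anticipate any genuine obstacle; the only subtlety worth noting is that $B$ and $C$ need not be disjoint (an index $n$ for which both $2n-1$ and $2n$ lie in $A$ appears in both), but this is harmless because the two partial sums defining $f_1$ (and likewise $f_2$) are bounded independently before being combined via the quasi-triangle inequality.
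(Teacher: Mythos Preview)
Your argument is correct and reaches the stated conclusion, including the constant $\sqrt{2}$ in the Banach case. The overall structure matches the paper's: project $\Ind_{\varepsilon,A}[\XB\diamond\YB]$ onto its $\XX$- and $\YY$-components and bound each via the upper super-democracy function.

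There is one technical difference worth noting. The paper extends $\varepsilon$ by zero outside $A$, writes the $\XX$-component as $\sum_{n\in\eta(A)}(\varepsilon_{2n-1}+\varepsilon_{2n})\,\xx_n$ (coefficients of modulus at most $2$, supported on a set of cardinality at most $|A|$), and then invokes Lemma~\ref{constantupperdemfunct} to absorb the non-unimodular coefficients; this yields the constant $\sqrt{2}\,(2^p-1)^{-1/p}$ in the $p$-Banach setting. You instead split the $\XX$-component as a sum of two genuine indicator vectors $\Ind_{\varepsilon',B}+\Ind_{\varepsilon'',C}$ and apply the quasi-triangle inequality directly, obtaining $\sqrt{2}\,\max\{\kappa_\XX,\kappa_\YY\}$. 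Your route is slightly more elementary (it bypasses the auxiliary convexity lemma) and in fact gives a smaller constant for $p<1$, since $2^{1/p-1}<(2^p-1)^{-1/p}$ there; the paper's route has the advantage of keeping the support of the component vector equal to $\eta(A)$ rather than splitting it, which can matter in related estimates. Both collapse to $\sqrt{2}$ when $\XX$ and $\YY$ are Banach spaces.
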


\begin{proof}
Let $m\in\NN$, $A\subseteq\NN$ with $|A|\le m$, and $\varepsilon=(\varepsilon_n)_{n\in A}\in\EE_A$. We extend $\varepsilon$ by setting $\varepsilon_n=0$ if $n\in\NN\setminus A$. Put
\[
B=\{ n\in\NN \colon 2n-1\in A\} \cup \{ n\in\NN \colon 2n\in A\},
\]
that is, $B=\eta(A)$. We have $|\varepsilon_{2n-1} \pm \varepsilon_{2n}|\le 2 \chi_B(n)$ for all $n\in\NN$, and all $B$ such that $|B|\le |A|$. Thus, if $C$ is the constant in Lemma~\ref{constantupperdemfunct},
\begin{align*}
\Vert \Ind_{\varepsilon,A}&[\XB\diamond\YB,\XX\oplus\YY]\Vert\\
&= \frac{1}{\sqrt{2}} \max \left\{
\left\Vert \sum_{n=1}^\infty (\varepsilon_{2n-1}+\varepsilon_{2n})\xx_n \right\Vert,
\left\Vert \sum_{n=1}^\infty (\varepsilon_{2n-1}-\varepsilon_{2n})\yy_n \right\Vert
\right\}\\
&\le \frac{2C}{\sqrt{2}} \max \left\{ \udf[\XB,\XX](m), \udf[\YB,\YY](m).
\right\}\qedhere
\end{align*}
\end{proof}

\begin{proposition}\label{prop:bidem}
Suppose that $\XB$ and $\YB$ are bidemocratic bases of quasi-Banach spaces $\XX$ and $\YY$ respectively. Suppose also that
\[
s_m:= \udf[\XB,\XX](m) \approx \udf[\YB,\YY](m), \quad m\in\NN.
\]
Then $\XB\diamond\YB$ is a bidemocratic basis of $\XX\oplus\YY$. Moreover,
\[
\udf[\XB\diamond\YB,\XX\oplus\YY](m) \approx s_m, \quad m\in\NN.
\]
\end{proposition}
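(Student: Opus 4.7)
The plan is to read this proposition as an essentially immediate consequence of the three preceding lemmas, combined with the universal estimate $m\le\varphi(m)\varphi^{\ast}(m)$ recalled in the Introduction. I would organize the argument in three short steps: an upper bound for $\udf[\XB\diamond\YB,\XX\oplus\YY]$, an upper bound for the corresponding quantity of the dual basis, and then a self-dual multiplication trick to get bidemocracy together with the reverse inequality for $\udf$.

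First, the hypothesis gives constants with $\udf[\XB,\XX](m)\le C_1 s_m$ and $\udf[\YB,\YY](m)\le C_1 s_m$ for all $m\in\NN$. Plugging these into Lemma~\ref{lem:SDDiamond} yields
\[
\udf[\XB\diamond\YB,\XX\oplus\YY](m) \;\lesssim\; s_m, \qquad m\in\NN.
\]
Since $\XB$ is $\Delta_b$-bidemocratic and $\udf[\XB,\XX]\approx\varphi[\XB,\XX]$, the elementary universal estimate $m\le \varphi(m)\varphi^{\ast}(m)$ forces $\udf[\XB^{\ast},\XX^{\ast}](m)\approx m/s_m$, and likewise $\udf[\YB^{\ast},\YY^{\ast}](m)\approx m/s_m$.

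Second, by Lemma~\ref{lem:dualdiamond} the dual basis of $\XB\diamond\YB$ is exactly $\XB^{\ast}\diamond\YB^{\ast}$. Applying Lemma~\ref{lem:SDDiamond} to this dual pair (the proof of that lemma goes through verbatim for basic sequences, using only Lemma~\ref{constantupperdemfunct}) gives
\[
\udf[(\XB\diamond\YB)^{\ast},(\XX\oplus\YY)^{\ast}](m) \;\lesssim\; \frac{m}{s_m}, \qquad m\in\NN.
\]

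Third, multiplying the two upper bounds produces
\[
\udf[\XB\diamond\YB](m)\,\udf[(\XB\diamond\YB)^{\ast}](m) \;\lesssim\; s_m\cdot \frac{m}{s_m}\;=\;m,
\]
so $\XB\diamond\YB$ is bidemocratic. Finally, for the reverse estimate $\udf[\XB\diamond\YB](m)\gtrsim s_m$, I invoke again the universal inequality $m\le \varphi\varphi^{\ast}\le \udf[\XB\diamond\YB](m)\,\udf[(\XB\diamond\YB)^{\ast}](m)$ and combine it with the second step to obtain
\[
\udf[\XB\diamond\YB](m) \;\gtrsim\; \frac{m}{\udf[(\XB\diamond\YB)^{\ast}](m)} \;\gtrsim\; \frac{m}{m/s_m} \;=\; s_m.
\]
There is no real obstacle here; the only point requiring a touch of care is the observation that Lemma~\ref{lem:SDDiamond} applies equally well to the dual basic sequences $\XB^{\ast},\YB^{\ast}$ (which need not span their ambient spaces in this paper's convention), because its proof relies solely on the estimate of Lemma~\ref{constantupperdemfunct}, which only involves coefficients of modulus $\le 1$ and the super-democracy functions of the individual sequences.
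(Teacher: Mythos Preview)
Your proof is correct and follows essentially the same route as the paper: apply Lemma~\ref{lem:SDDiamond} to $\XB,\YB$ and to their duals, invoke Lemma~\ref{lem:dualdiamond} to identify $(\XB\diamond\YB)^{\ast}$ with $\XB^{\ast}\diamond\YB^{\ast}$, and multiply the two upper bounds. You are simply more explicit than the paper in spelling out the lower bound $\udf[\XB\diamond\YB](m)\gtrsim s_m$ via the universal inequality $m\le\udf\cdot\udf^{\ast}$, and in noting that Lemma~\ref{lem:SDDiamond} applies to the dual basic sequences.
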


\begin{proof}Since, by assumption,
\[
\max\{\udf[\XB^{\ast},\XX](m) , \udf[\YB^{\ast},\YY](m)\} \lesssim \frac{m}{s_m}, \quad m\in\NN,
\]
applying Lemma~\ref{lem:SDDiamond} yields
\[
\udf[\XB\diamond\YB,\XX\oplus\YY](m) \lesssim s_m, \quad m\in\NN,\]
and
\[
\udf[\XB^{\ast}\diamond\YB^{\ast},\XX^{\ast}\oplus\YY^{\ast}](m) \lesssim \frac{m}{s_m}, \quad m\in\NN.
\]
Using Lemma~\ref{lem:dualdiamond}, these inequalities readily give the desired result.
\end{proof}

\begin{proposition}\label{prop:diamondConditional}
Let $\XB=(\xx_n)_{n=1}^\infty$ and $\YB=(\yy_n)_{n=1}^\infty$ be non-equivalent bases of quasi-Banach spaces $\XX$ and $\YY$ respectively. Then, $\XB\diamond\YB$ is a conditional basis of $\XX\oplus\YY$. Quantitatively, if
\[
\textstyle
\co_m=\{(a_n)_{n=1}^\infty\in\FF^\NN \colon |\{n\in\NN\colon a_n\not=0\}|\le m\}
\]
and
\[
E_m[\XB,\YB]=\sup_{(a_n)_{n=1}^\infty\in \co_m }
\frac{\Vert \sum_{n=1}^\infty a_n\, \xx_n\Vert}{\Vert \sum_{n=1}^\infty a_n\, \yy_n\Vert}, \quad m\in\NN,
\]
then
\[
\unc_m[\XB\diamond\YB,\XX\oplus\YY]\ge \frac{1}{2}\max\{E_m[\XB,\YB],E_m[\YB,\XB]\}, \quad m\in\NN.
\]
\end{proposition}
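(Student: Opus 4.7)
The plan is to exhibit, for each $(a_n)\in\co_m$, an explicit test vector $f\in\XX\oplus\YY$ together with a coordinate projection $S_A$ of $\XB\diamond\YB$ whose operator norm witnesses the claimed lower bound for $\unc_m$. Conditionality then follows immediately: non-equivalence of $\XB$ and $\YB$ forces at least one of the ratios $E_m[\XB,\YB]$, $E_m[\YB,\XB]$ to be unbounded in $m$.

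The main computational input is the pair of ``inverse'' identities, immediate from the definition of $\XB\diamond\YB$:
\[
(\xx_n,0)=\tfrac{1}{\sqrt{2}}(\zz_{2n-1}+\zz_{2n}),\qquad (0,\yy_n)=\tfrac{1}{\sqrt{2}}(\zz_{2n-1}-\zz_{2n}).
\]
First I would fix $(a_n)\in\co_m$ with support $A$ and set $f=\bigl(\sum_n a_n\xx_n,\,0\bigr)$. Using the first identity, $f$ has coefficients $b_{2n-1}=b_{2n}=a_n/\sqrt{2}$ in the basis $\XB\diamond\YB$, so the projection onto $A^{\plus}$, padded to cardinality exactly $m$ with arbitrary indices (harmless, since $b$ vanishes outside $A^{\plus}\cup A^{\minus}$), equals
\[
S_{A^{\plus}}(f)=\sum_n \frac{a_n}{\sqrt{2}}\,\zz_{2n-1}=\frac{1}{2}\Bigl(\sum_n a_n\xx_n,\;\sum_n a_n\yy_n\Bigr).
\]
Since $\|f\|=\|\sum_n a_n\xx_n\|$ and $\|S_{A^{\plus}}(f)\|\ge \tfrac12\|\sum_n a_n\yy_n\|$, supping over $(a_n)\in\co_m$ yields $\unc_m\ge \tfrac12 E_m[\YB,\XB]$. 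Choosing instead $f'=\bigl(0,\sum_n a_n\yy_n\bigr)$ and expanding via the second identity gives, by an identical argument, $\unc_m\ge \tfrac12 E_m[\XB,\YB]$. The maximum of these two is the desired quantitative estimate.

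For the qualitative statement, if $\XB\diamond\YB$ were unconditional, then $\sup_m\unc_m<\infty$ would force both $\sup_m E_m[\XB,\YB]<\infty$ and $\sup_m E_m[\YB,\XB]<\infty$; these inequalities, valid for every finitely supported scalar sequence, show that the linear map $\xx_n\mapsto\yy_n$ is bicontinuous on the dense span and hence extends to an isomorphism $\XX\to\YY$, contradicting the non-equivalence hypothesis. I do not foresee any real obstacle; the only mild care needed is in padding $A^{\plus}$ so that its cardinality is exactly $m$ (harmless, as remarked), and in recognising that the two-sided bound on $E_m$ passes by density to the full equivalence of bases.
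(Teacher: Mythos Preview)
Your proof is correct and is essentially the same as the paper's: the paper sets $f_{\plus}=\sum_{n\in A}a_n\zz_{2n-1}$ and $f_{\minus}=\sum_{n\in A}a_n\zz_{2n}$, observes that $f_{\plus}\pm f_{\minus}$ are (up to $\sqrt{2}$) exactly your test vectors $(\sum a_n\xx_n,0)$ and $(0,\sum a_n\yy_n)$, and then compares $\Vert f_{\plus}\Vert$ with $\Vert f_{\plus}\pm f_{\minus}\Vert$ to obtain the two ratio bounds---the same projection onto $A^{\plus}$ that you use. Your remark on padding $A^{\plus}$ to cardinality exactly $m$ is a point the paper leaves implicit, so in that respect your write-up is slightly more careful.
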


\begin{proof}
Given an eventually null sequence $(a_n)_{n\in A}$, define $(b_n)_{n\in A^{\plus}}$ and $(c_n)_{n\in A^{\minus}}$ by $b_{2n-1}=c_{2n}=a_n$ for all $n\in A$. If
\[
f_{\plus}=\sum_{n\in A^{\plus}} b_n\, \zz_n
\;\text{and}\;
f_{\minus}=\sum_{n\in A^{\minus}} c_n\, \zz_n
\]
we have
\begin{align*}
f_{\plus}
&=\frac{1}{\sqrt{2}}\left(\sum_{n\in A} a_n\, \xx_n,\sum_{n\in A} a_n\, \yy_n\right),\\
f_{\plus}+ f_{\minus} &=\sqrt{2} \left(\sum_{n\in A} a_n\, \xx_n,0\right),
\;\text{and}\\
f_{\plus}- f_{\minus}&=\sqrt{2} \left(0,\sum_{n\in A} a_n\, \yy_n\right).
\end{align*}
Therefore,
\[
\frac{ \Vert f_{\plus}\Vert}{ \Vert f_{\plus}+f_{\minus}\Vert }\ge\frac{1}{2}\frac{\|\sum_{n=1}^{\infty}a_n\yy_n\|}{\| \sum_{n=1}^\infty a_n\, \xx_n\|}
\;\text{and}\;
\frac{ \Vert f_{\plus}\Vert}{ \Vert f_{\plus}-f_{\minus}\Vert }\ge\frac{1}{2}\frac{\|\sum_{n=1}^{\infty}a_n\xx_n\|}{\| \sum_{n=1}^\infty a_n\, \yy_n\|}.\qedhere
\]
\end{proof}

Proposition~\ref{prop:diamondConditional} gives that the conditionality constants of $\XB\diamond\YB$ are bounded below by
\[
\frac{1}{2}\max\left\{
\frac{\udf[\XB,\XX]}{\udf[\YB,\YY]},
\frac{\udf[\YB,\YY]}{\udf[\XB,\XX]},
\frac{\ldf[\XB,\XX]}{\ldf[\YB,\YY]},
\frac{\ldf[\YB,\YY]}{\ldf[\XB,\XX]}
\right\}.
\]
Thus, applying our method to bases with non-equivalent fundamental functions yields `highly' conditional bases. In contrast, since bidemocratic bases are truncation quasi-greedy (see \cite{AABW2021}*{Proposition 5.7}), a combination of Proposition~\ref{prop:bidem} with Theorem~\ref{thm:BidCond} exhibits that we can apply the `rotation method' to bidemocratic bases with equivalent fundamental functions to obtain bases whose conditionality constants grow `slowly'. However, the basis $\XB\diamond\YB$ is always conditional unless $\XB$ and $\YB$ are equivalent. In this context, since quasi-greedy bases are truncation quasi-greedy (see \cite{AABW2021}*{Theorem 4.13}) we ask ourselves whether our construction preserves quasi-greediness. Our next result provides an affirmative answer to this question.

\begin{theorem}\label{thm:QGC}
Let $\XB$ and $\YB$ be bidemocratic bases of quasi-Banach spaces $\XX$ and $\YY$ respectively. Suppose that
\[
\udf[\XB,\XX](m) \approx \udf[\YB,\YY](m), \quad m\in\NN.
\]
Then,
\[
\g_m[\XB\diamond\YB,\XX\oplus\YY]\approx \max\{\g_m[\XB,\XX], \g_m[\YB,\YY]\}, \quad m\in\NN.
\]
In particular, $\XB\diamond\YB$ is quasi-greedy if and only if $\XB$ and $\YB$ are quasi-greedy.
\end{theorem}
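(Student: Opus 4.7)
For the lower bound $\g_m[\XB\diamond\YB,\XX\oplus\YY]\gtrsim\max\{\g_m[\XB,\XX],\g_m[\YB,\YY]\}$, I test on $(\phi,0)\in\XX\oplus\YY$. Given $\phi\in\XX$ with $\Vert\phi\Vert\le 1$ and a greedy set $A$ of $\phi$ in $\XB$ of cardinality $k\le m$, the coefficients of $(\phi,0)$ in $\XB\diamond\YB$ at the paired indices $2n-1$ and $2n$ coincide and both equal $\xx_n^{*}(\phi)/\sqrt 2$, so $A^{\plus}:=\{2n-1:n\in A\}$ is a greedy set of $(\phi,0)$ of cardinality $k$. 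A direct computation gives
\[
S_{A^{\plus}}(\phi,0)=\tfrac12\Bigl(S_A[\XB](\phi),\;\textstyle\sum_{n\in A}\xx_n^{*}(\phi)\,\yy_n\Bigr),
\]
whose max-norm is at least $\tfrac12\Vert S_A[\XB](\phi)\Vert$. Varying $A$ and $\phi$ yields $\gc_k[\XB\diamond\YB]\ge\tfrac12\gc_k[\XB,\XX]$; the symmetric argument with $(0,\psi)$ handles $\YB$; passing to suprema over $k\le m$ gives the lower bound.

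For the upper bound, fix $f=(\phi,\psi)\in\XX\oplus\YY$ with $\Vert f\Vert\le 1$, a greedy set $B$ of $f$ in $\XB\diamond\YB$ of cardinality $m$, and threshold $\lambda$. Writing $a_n=\xx_n^{*}(\phi)$ and $b_n=\yy_n^{*}(\psi)$, so that the coefficients of $f$ in $\XB\diamond\YB$ are $c_{2n-1}=(a_n+b_n)/\sqrt 2$ and $c_{2n}=(a_n-b_n)/\sqrt 2$, I partition $B$ by pair structure into $D=\{n:2n-1,2n\in B\}$, $E=\{n:2n-1\in B,\,2n\notin B\}$, and $F=\{n:2n\in B,\,2n-1\notin B\}$. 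Using the identities $(\xx_n,0)=(\zz_{2n-1}+\zz_{2n})/\sqrt 2$ and $(0,\yy_n)=(\zz_{2n-1}-\zz_{2n})/\sqrt 2$, the paired indices telescope to produce the key decomposition
\begin{equation*}
S_B(f) = \bigl(S_D[\XB](\phi),\; S_D[\YB](\psi)\bigr) + S_{E^{\plus}\cup F^{\minus}}[\XB\diamond\YB](f),
\end{equation*}
where $E^{\plus}=\{2n-1:n\in E\}$ and $F^{\minus}=\{2n:n\in F\}$. Because $\XB\diamond\YB$ is bidemocratic by Proposition~\ref{prop:bidem} and hence truncation quasi-greedy by \cite{AABW2021}*{Proposition~5.7}, the truncation inequality $\lambda\,\udf[\XB\diamond\YB,\XX\oplus\YY](m)\lesssim\Vert f\Vert$ holds; this, together with Lemma~\ref{constantupperdemfunct} and the equivalence $\udf[\XB\diamond\YB](m)\approx s_m$ from Proposition~\ref{prop:bidem}, is the main truncation-type tool.

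The residual $S_{E^{\plus}\cup F^{\minus}}(f)$ is supported on a set of cardinality at most $m$ whose pair-partner coefficients lie below threshold; splitting its indices according to whether the dominant coefficient of the pair comes from $\phi$ or from $\psi$ and applying Lemma~\ref{constantupperdemfunct} together with the truncation inequality above gives a bound by a constant times $\Vert f\Vert$. For the first summand, by symmetry it suffices to bound $\Vert S_D[\XB](\phi)\Vert\lesssim\g_m[\XB,\XX]\Vert f\Vert$. Comparing $D$ with the canonical greedy set $A^{\phi}_{\lambda}:=\{n:|a_n|\ge\lambda\}$ of $\phi$ in $\XB$, I use the three-term split $S_D[\XB](\phi)=S_{A^{\phi}_{\lambda}}[\XB](\phi)+S_{D\setminus A^{\phi}_{\lambda}}[\XB](\phi)-S_{A^{\phi}_{\lambda}\setminus D}[\XB](\phi)$: the truncation inequality applied to $\XB$ (bidemocratic, hence truncation quasi-greedy) together with the doubling of $s$ yields $|A^{\phi}_{\lambda}|\lesssim m$, whence $\Vert S_{A^{\phi}_{\lambda}}[\XB](\phi)\Vert\le\g_m[\XB,\XX]\Vert\phi\Vert$; the piece on $D\setminus A^{\phi}_{\lambda}$ has $|a_n|<\lambda$ on a set of cardinality at most $m$, so Lemma~\ref{constantupperdemfunct} bounds it by $\Vert f\Vert$.

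The main obstacle is the remainder $S_{A^{\phi}_{\lambda}\setminus D}[\XB](\phi)$: every $n\in A^{\phi}_{\lambda}\setminus D$ satisfies $|a_n|\ge\lambda$ while one of $|a_n+b_n|,|a_n-b_n|$ is strictly less than $\sqrt 2\lambda$, so $b_n$ nearly cancels $a_n$ in one of the pairings. When $n\notin D\cup E\cup F$ both of these inequalities hold, forcing $|a_n|\le\sqrt 2\lambda$ and bringing us back into the democratic regime; when $n\in E\cup F$, a dyadic decomposition of $A^{\phi}_{\lambda}\setminus D$ by the size of $|a_n|$, combined with the truncation-quasi-greedy inequality for $\YB$ transported through the equivalence $\udf[\XB,\XX]\approx\udf[\YB,\YY]\approx s_m$, bounds each scale by a constant multiple of $\Vert f\Vert$; the relation $\lambda s_m\lesssim\Vert f\Vert$ keeps the number of nontrivial scales bounded. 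Assembling the three estimates in the max-norm and running the symmetric argument for the second coordinate of $S_B(f)$ gives $\Vert S_B(f)\Vert\lesssim\max\{\g_m[\XB,\XX],\g_m[\YB,\YY]\}\Vert f\Vert$, and the ``in particular'' assertion follows.
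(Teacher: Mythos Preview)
Your lower bound is fine, and your initial decomposition
\[
S_B(f)=\bigl(S_D[\XB](\phi),\,S_D[\YB](\psi)\bigr)+S_{E^{\plus}\cup F^{\minus}}(f)
\]
is correct. The upper-bound argument, however, has two real gaps.

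\textbf{First gap.} The assertion that the truncation inequality together with doubling of $s_m$ gives $|A^{\phi}_{\lambda}|\lesssim m$ is false. Take $\XX=\YY=\ell_2$ with the canonical bases, $\phi=\sum_{n=1}^N\ee_n$, $\psi=0$, and $m=1$. All the coefficients $c_k$ equal $1/\sqrt{2}$, so $B=\{1\}$ is a greedy set with threshold $\lambda=1/\sqrt{2}$; yet $A^{\phi}_{\lambda}=\{1,\dots,N\}$ has cardinality $N$, which can be arbitrarily large. The truncation estimate only yields $\lambda\,s_{|A^{\phi}_{\lambda}|}\lesssim\|\phi\|$, which gives no control on $|A^{\phi}_{\lambda}|$ in terms of $m$ because there is no lower bound on $\lambda$. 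The paper fixes this by using the threshold $\sqrt{2}\lambda$ instead: if $|a_n|>\sqrt{2}\lambda$ then $\max(|c_{2n-1}|,|c_{2n}|)>\lambda$, forcing $n\in\eta(B)$, so $|\{n:|a_n|>\sqrt{2}\lambda\}|\le|\eta(B)|\le m$ \emph{combinatorially}, with no appeal to truncation.

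\textbf{Second gap.} Even granting the first point, your dyadic argument for $S_{A^{\phi}_{\lambda}\setminus D}[\XB](\phi)$ on $E\cup F$ does not close. At scale $j$ (where $|a_n|\in[2^j\lambda,2^{j+1}\lambda)$), the combination of Lemma~\ref{constantupperdemfunct} with truncation quasi-greediness of either $\XB$ or $\YB$ indeed gives a bound $\lesssim\|f\|$ per scale, but the number of scales is $\approx\log(\max_n|a_n|/\lambda)$, and nothing in the hypotheses (in particular not the relation $\lambda s_m\lesssim\|f\|$) bounds this quantity. Summing over scales therefore produces an unwanted logarithmic loss. The paper avoids any dyadic layering: its Claims~1 and~2 split the index set \emph{pointwise} according to whether $|a_n|\le|b_n|$ or which of $|a_n\pm b_n|$ dominates, and on each of the two resulting pieces the ratio $\max|a_n|/\min|b_n|$ (or the analogous ratio) is at most $2$, so a single application of Lemma~\ref{lem:ShareBidem} suffices.

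In short, the overall architecture (separate paired from unpaired indices, compare with a genuine greedy set of $\phi$) is right, but the choice of comparison threshold and the handling of the residual need the sharper combinatorial and pointwise-splitting ideas that the paper encapsulates in its two Claims.
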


Before the proof of Theorem~\ref{thm:QGC} we give two auxiliary lemmas.

\begin{lemma}\label{lem:ShareBidem}
Let $\XB=(\xx_n)_{n=1}^\infty$ and $\YB=(\yy_n)_{n=1}^\infty$ be bases of quasi-Banach spaces $\XX$ and $\YY$ respectively. Suppose that $\YB$ is truncation quasi-greedy and that
\[
\udf[\XB,\XX](m) \lesssim \ldf[\YB,\YY](m), \quad m\in\NN.
\]
Then, there is a constant $C_0$ such that
\[
\left\Vert \sum_{n\in E} c_n \, \xx_n \right\Vert
\le C_0 \left\Vert \sum_{n=1}^\infty d_n\, \yy_n \right\Vert
\]
whenever $\max_{n\in E} |c_n|\le \min_{n\in E} |d_n|$.
\end{lemma}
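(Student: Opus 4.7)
The plan is to reduce to the normalized case and then chain together Lemma~\ref{constantupperdemfunct}, the hypothesis, the definition of $\ldf[\YB,\YY]$, and Lemma~\ref{lem:truncation quasi-greedyQU}. More explicitly, if every $d_n$ vanishes on $E$ the inequality is trivial, so I may assume $\alpha:=\min_{n\in E}|d_n|>0$. Dividing both $(c_n)_{n\in E}$ and $(d_n)_{n=1}^\infty$ by $\alpha$ shows that it suffices to prove the statement under the normalization $\min_{n\in E}|d_n|=1$, which in turn gives $|c_n|\le 1$ for every $n\in E$.

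With that normalization in force, the first step is to apply Lemma~\ref{constantupperdemfunct} to the basis $\XB$ and the family $(c_n)_{n\in E}$, yielding a constant $C_1$ (depending only on the modulus of concavity of $\XX$) with
\[
\left\Vert \sum_{n\in E} c_n\,\xx_n \right\Vert \le C_1\,\udf[\XB,\XX](|E|).
\]
The hypothesis $\udf[\XB,\XX](m)\lesssim \ldf[\YB,\YY](m)$ then yields a constant $C_2$ with
\[
\udf[\XB,\XX](|E|)\le C_2\,\ldf[\YB,\YY](|E|).
\]

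Next, because $|d_n|\ge 1>0$ for every $n\in E$, the signs $\varepsilon_n:=\sgn(d_n)$ are well defined and $\varepsilon=(\varepsilon_n)_{n\in E}\in\EE_E$. By the very definition of the lower super-democracy function applied to the set $E$ (which has cardinality $|E|\ge |E|$), we obtain
\[
\ldf[\YB,\YY](|E|)\le \left\Vert \Ind_{\varepsilon,E}[\YB,\YY]\right\Vert
=\left\Vert \sum_{n\in E}\sgn(d_n)\,\yy_n\right\Vert.
\]
Finally, put $f=\sum_{n=1}^\infty d_n\,\yy_n$ and $a_n=\sgn(d_n)$ for $n\in E$. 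Since $\max_{n\in E}|a_n|=1\le \min_{n\in E}|d_n|=\min_{n\in E}|\yy_n^{\ast}(f)|$, Lemma~\ref{lem:truncation quasi-greedyQU} applied to $\YB$ supplies a constant $C_3$, depending only on the modulus of concavity of $\YY$ and the truncation quasi-greedy constant of $\YB$, with
\[
\left\Vert \sum_{n\in E}\sgn(d_n)\,\yy_n\right\Vert \le C_3\,\Vert f\Vert = C_3\,\left\Vert \sum_{n=1}^\infty d_n\,\yy_n\right\Vert.
\]
Concatenating the four displayed inequalities gives the claim with $C_0 = C_1C_2C_3$. The proof is essentially a four-step chase; there is no real obstacle, only the minor bookkeeping of the initial scaling reduction and the verification that the hypothesis of Lemma~\ref{lem:truncation quasi-greedyQU} is met by the unimodular scalars $\sgn(d_n)$.
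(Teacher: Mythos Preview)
Your proof is correct and follows essentially the same route as the paper's, which simply states that the result ``is immediate from Lemma~\ref{constantupperdemfunct} and Lemma~\ref{lem:truncation quasi-greedyQU} combined.'' Your argument is the natural detailed unpacking of that one-liner: normalize, apply Lemma~\ref{constantupperdemfunct} to land in $\udf[\XB,\XX](|E|)$, use the hypothesis to pass to $\ldf[\YB,\YY](|E|)$, bound that by $\lVert \Ind_{\varepsilon,E}[\YB,\YY]\rVert$, and finish with Lemma~\ref{lem:truncation quasi-greedyQU}. One cosmetic remark: the specific choice $\varepsilon_n=\sgn(d_n)$ is unnecessary, since any $\varepsilon\in\EE_E$ satisfies $\max_{n\in E}|\varepsilon_n|=1\le\min_{n\in E}|\yy_n^{\ast}(f)|$; but this does not affect the argument.
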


\begin{proof}
It is immediate from Lemma~\ref{constantupperdemfunct} and Lemma~\ref{lem:truncation quasi-greedyQU} combined.
\end{proof}

\begin{lemma}\label{cor:doubling}
Let $\XB$ be a basis of a quasi-Banach space $\XX$. If $\XB$ is truncation quasi-greedy there is a constant $C$ such that
\[
\gc_m[\XB,\XX]\le C\, \gc_k[\XB,\XX], \quad k\le m \le 2k.
\]
In particular, the sequences $(\gc_m[\XB,\XX])_{m=1}^\infty$ and $(\g_m[\XB,\XX])_{m=1}^\infty$ are doubling.
\end{lemma}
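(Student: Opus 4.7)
The plan is to estimate $\|S_A(f)\|$ where $f \in B_\XX$ and $A$ is a greedy set of $f$ with $|A|=m$, $k\le m\le 2k$. Enumerate $A=\{n_1,\dots,n_m\}$ so that $|\xx_{n_i}^{\ast}(f)|$ is non-increasing and split $A=A_1\sqcup A_2$ with $A_1=\{n_1,\dots,n_k\}$ and $A_2=\{n_{k+1},\dots,n_m\}$. Since $A_1$ is itself a greedy set of $f$ of cardinality exactly $k$, one has $\|S_{A_1}(f)\|\le\gc_k\|f\|$, and the quasi-triangle inequality gives
\[
\|S_A(f)\|\le\kappa\bigl(\gc_k\|f\|+\|S_{A_2}(f)\|\bigr),
\]
so the task reduces to bounding $\|S_{A_2}(f)\|$ by a constant (depending only on $\Lambda_u$ and the modulus of concavity $\kappa$) times $\gc_k\|f\|$.

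The naive estimate, obtained by treating $A_2$ as a greedy set of $g:=f-S_{A_1}(f)$ of cardinality $m-k\le k$, produces $\|S_{A_2}(f)\|\le\gc_{m-k}\|g\|\le\gc_k\kappa(1+\gc_k)\|f\|$, which is quadratic in $\gc_k$ and therefore insufficient. To circumvent this, I would apply Lemma~\ref{lem:truncation quasi-greedyQU} together with the truncation bound $\|\UU(f,A_1)\|\le\Lambda_u\|f\|$. The coefficients of $f$ on $A_2$ are all bounded in absolute value by $t:=|\xx_{n_k}^{\ast}(f)|=\min_{n\in A_1}|\xx_n^{\ast}(f)|$, which coincides with the scale of the truncation $\UU(f,A_1)=t\sum_{n\in A_1}\sgn(\xx_n^{\ast}(f))\,\xx_n$. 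Applying Lemma~\ref{lem:truncation quasi-greedyQU} to the set $A_2$, with the coefficients $(\xx_n^{\ast}(f))_{n\in A_2}$ and an auxiliary vector $f'$ engineered so that $|\xx_n^{\ast}(f')|\ge t$ for all $n\in A_2$, yields $\|S_{A_2}(f)\|\le C\|f'\|$; a careful choice of $f'$, built from $S_{A_1}(f)$ supplemented by a correction supported on $A_2$, should then produce $\|f'\|\lesssim\gc_k\|f\|$.

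The main obstacle is controlling the norm of the correction term $t\sum_{n\in A_2}\sgn(\xx_n^{\ast}(f))\,\xx_n$ without picking up an extra factor of $\gc_k$. One must pair the truncation bound $\|\UU(f,A)\|\le\Lambda_u\|f\|$ (for the greedy set $A$) with a suppression-type property enjoyed by truncation quasi-greedy bases---of the sort encapsulated by the SUCC and LPU properties recalled in the text---to compare the signed indicator sum on $A_2$ with one on a greedy set of $f$ of comparable cardinality. Since $\gc_k\ge 1$ (as witnessed by $f=\xx_n$ with $A=\{n\}$), assembling the estimates yields $\gc_m\le C'\gc_k$ with $C'=C'(\Lambda_u,\kappa)$.

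The doubling of $(\gc_m)$ is the special case $m=2k$ of the main inequality, while the doubling of $(\g_m)$ follows at once:
\[
\g_{2k}=\sup_{j\le 2k}\gc_j\le\max\Bigl\{\g_k,\sup_{k<j\le 2k}C'\gc_k\Bigr\}\le\max\{1,C'\}\,\g_k.
\]
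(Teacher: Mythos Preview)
Your splitting $A=A_1\sqcup A_2$ with $A_1$ a greedy set of size $k$ is exactly the paper's decomposition. The difference lies in how $\|S_{A_2}(f)\|$ is controlled.

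The paper dispatches this in one stroke: since $|A_2|\le|A_1|$, choose a permutation $\pi$ of $\NN$ sending $A_2$ into $A_1$ and apply Lemma~\ref{lem:ShareBidem} to the pair $(\XB,\XB_\pi)$, taking $E=A_2$, $c_n=\xx_n^*(f)$, and target vector $f$ expressed in the permuted basis (so $d_n=\xx_{\pi(n)}^*(f)$). For $n\in A_2$ one has $\pi(n)\in A_1$, whence $|d_n|\ge t\ge|c_n|$; the lemma then gives $\|S_{A_2}(f)\|\le C_0\|f\|$ outright, and therefore $\gc_m\le\kappa(C_0+\gc_k)$.

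Your auxiliary-$f'$ route does not close. Because $S_{A_1}(f)$ vanishes on $A_2$, any $f'$ of the form ``$S_{A_1}(f)$ plus a correction supported on $A_2$'' with $|\xx_n^*(f')|\ge t$ on $A_2$ forces the correction itself to carry coefficients of size at least $t$ there; so you are reduced to bounding $t\|\Ind_{\varepsilon,A_2}\|$ by a constant times $\|f\|$. That is a super-democracy estimate---comparing an indicator on $A_2$ with one on the disjoint greedy set $A_1$---and the SUCC/LPU properties do \emph{not} yield it: they let you pass to subsets or change signs, not compare indicators on two disjoint sets of the same cardinality. The permutation in the paper's use of Lemma~\ref{lem:ShareBidem} is precisely the device that converts the ``large coefficients on $A_1$'' information into a bound indexed by $A_2$, so you should invoke that lemma directly rather than trying to manufacture $f'$ by hand.

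One honest caveat: Lemma~\ref{lem:ShareBidem} applied to $(\XB,\XB_\pi)$ requires $\udf[\XB]\lesssim\ldf[\XB_\pi]=\ldf[\XB]$, i.e., super-democracy, which is not among the stated hypotheses of Lemma~\ref{cor:doubling}. In the only place the lemma is used (Theorem~\ref{thm:QGC}) the bases are bidemocratic, so this is available; but as a standalone statement under truncation quasi-greediness alone, both your attempt and the paper's argument seem to lean on this extra ingredient.
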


\begin{proof}
Let $A$ be a greedy set of $f\in\XX$ with $|A|=m$. Pick a greedy set $B$ of $f$ with $B\subseteq A$ and $|B|=k$. Since $|A\setminus B|\le |B|$, applying Lemma~\ref{lem:ShareBidem} with $\XB$ and a permutation of $\XB$ yields $\Vert S_{A\setminus B}(f) \Vert \le C_0 \Vert f\Vert$, where $C_0$ only depends on $\XB$ and $\XX$. Hence, if $\kappa$ denotes the modulus of concavity of $\XX$, $\Vert S_A(f)\Vert \le \kappa(C_0+\gc_k) \Vert f \Vert$.
\end{proof}

\begin{proof}[Proof of Theorem~\ref{thm:QGC}]
Set $\XB\diamond \YB=(\zz_n)_{n=1}^\infty$. Choosing $f\in\XX\oplus\YY$ with $\zz_{2n-1}^{\ast}(f)=\pm \zz_{2n}^{\ast}(f)$ yields
\[
\h_m:= \max\{\g_m[\XB,\XX], \g_m[\YB,\YY]\}\le \g_{2m}[\XB\diamond\YB,\XX\oplus\YY], \quad m\in\NN.
\]
Using Lemma~\ref{cor:doubling}, we obtain the desired upper estimate for $\h_m$.

For $A\subseteq\NN$, set $S_A=S_A[\XB\diamond\YB,\XX\oplus\YY]$, $S_A^\XX=S_A[\XB, \XX]$, and
$S_A^\YY=S_A[\YB,\YY]$. Given a greedy set $B$ of $f=(g,h)\in \XX\oplus\YY$, let
$A_1$, $A_2$ and $A_{12}$ be disjoint subsets of $\NN$ such that
\[
B=(A_{12}\cup A_1)^{\plus} \cup (A_{12}\cup A_2)^{\minus}.
\]
We have $|B|=2|A_{12}|+|A_1|+|A_2|$. Set $A_0=\NN\setminus (A_{12}\cup A_1\cup A_2)$. Let $(c_n)_{n=1}^\infty$ be the coefficients of $f$ relative to $\XB\diamond\YB$, let $(a_n)_{n=1}^\infty$ be the coefficients of $g$ relative to $\XB$, and let $(b_n)_{n=1}^\infty$ be the coefficients of $h$ with respect to $\YB$. If $c=\min\{|c_n| \colon n\in B\}$,
\[
\max_{n\in A_0} \left\{\frac{1}{\sqrt{2}} |a_n+b_n|, \frac{1}{\sqrt{2}} |a_n-b_n|\right\} =
\max_{n\in A_0^{\plus}\cup A_0^{\minus}} |c_n| \le c.
\]
Hence, $|a_n|$, $|b_n|\le \sqrt{2} c$ for all $n\in A_0$, i.e., $A_3\cup A_4\subseteq \NN\setminus A_0$, where
\[
A_3=\{n\in\NN \colon |a_n|>\sqrt{2} c\}, \quad A_4=\{n\in\NN \colon |b_n|>\sqrt{2} c\}.
\]
Note that $A_3$ is a greedy set of $g$, $A_4$ is a greedy set of $h$, and
\[
\max\{ |A_3|,|A_4|\}\le |\NN\setminus A_0|=|A_{12}\cup A_1\cup A_2|\le |A_{12}|+|A_1|+|A_2|\le |B|.
\]
Set $A_5=\NN\setminus (A_3\cup A_0)$ and $A_6=\NN\setminus (A_4\cup A_0)$.
Taking into account that, for any $D\subseteq\NN$, the coordinate projection on $\eta^{-1}(D)$ with respect to $\XB\diamond\YB$ coincides with that with respect to the direct sum $\XB\oplus\YB$ of bases $\XB$ and $\YB$ we obtain
\[
(S_{A_3}^\XX(g),S_{A_4}^\YY(h)) -S_B(f)=S_{A_1^{\minus}}(f) +S_{A_2^{\plus}}(f)-(S_{A_5}^\XX(g),S_{A_6}^\YY(h)).
\]
Therefore, it suffices to prove that
\[
\max\{ \Vert S_{A_5}^\XX(g)\Vert,
\Vert S_{A_6}^\YY(h)\Vert,
\Vert S_{A_1^{\minus}}(f) \Vert ,
\Vert S_{A_2^{\plus}}(f) \Vert\}
\le C_1 \Vert f \Vert
\]
for some constant $C_1$. Thus, the result would follow by applying the next two claims to the pairs of bases $(\XB,\YB)$, $(\XB,\YB^-)$, $(\YB,\XB)$, and $(\YB^-,\XB)$, where $\XB=(\xx_n)_{n=1}^\infty$, $\YB=(\yy_n)_{n=1}^\infty$, and $\YB^-=(-\yy_n)_{n=1}^\infty$.
\begin{claim}\label{claim1}
There is a constant $C$ such that
\[
\left\Vert \sum_{n\in A} a_n \, \xx_n\right\Vert \le C \left\Vert \sum_{n=1}^\infty a_n \, (\xx_n,\yy_n) + \sum_{n=1}^\infty b_n \, (\xx_n,-\yy_n) \right\Vert
\]
whenever $A\subseteq\NN$ is finite and $\max_{n\in A} |a_n|\le b:=\min_{n\in A} |b_n|$.
\end{claim}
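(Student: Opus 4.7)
\textbf{Proof plan for Claim~\ref{claim1}.} The strategy is to partition $A$ according to which of $|a_n+b_n|$ and $|a_n-b_n|$ dominates, bound each piece with a separate truncation quasi-greedy argument (one against $\XB$, the other transferring from $\YB$ to $\XB$), and recombine using the modulus of concavity.

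First, the elementary observation $\max\{|z+w|,|z-w|\}\ge |w|$, which follows from $\max\{|z+w|^2,|z-w|^2\}\ge \tfrac12(|z+w|^2+|z-w|^2)=|z|^2+|w|^2\ge|w|^2$, gives for every $n\in A$ that $\max\{|a_n+b_n|,|a_n-b_n|\}\ge|b_n|\ge b$. Set
\[
A_+=\{n\in A\colon |a_n+b_n|\ge b\},\qquad A_-=A\setminus A_+,
\]
so that $\min_{n\in A_-}|a_n-b_n|\ge b$, and put
\[
g=\sum_{n=1}^\infty (a_n+b_n)\,\xx_n,\qquad h=\sum_{n=1}^\infty (a_n-b_n)\,\yy_n,
\]
so the right-hand side of the inequality to be proved equals $\max\{\Vert g\Vert,\Vert h\Vert\}$.

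The bound on $A_+$ is direct: since $\XB$ is truncation quasi-greedy (bidemocratic bases are, by \cite{AABW2021}*{Proposition 5.7}), and on $A_+$ one has $\min_{n\in A_+}|\xx_n^{\ast}(g)|\ge b\ge \max_{n\in A_+}|a_n|$, Lemma~\ref{lem:truncation quasi-greedyQU} yields $\Vert \sum_{n\in A_+} a_n\,\xx_n\Vert\lesssim \Vert g\Vert$. For $A_-$ I apply Lemma~\ref{lem:ShareBidem} with $c_n=a_n$, $d_n=a_n-b_n$ and $E=A_-$; this requires the hypothesis $\udf[\XB,\XX]\lesssim \ldf[\YB,\YY]$. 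Here bidemocracy earns its keep: for any bidemocratic basis the chain
\[
m=\bigl|\Ind^{\ast}_{\varepsilon,A}(\Ind_{\varepsilon,A})\bigr|\le \Vert \Ind_{\varepsilon,A}\Vert\,\udf^{\ast}(m)\quad\text{and}\quad \udf(m)\,\udf^{\ast}(m)\lesssim m
\]
forces $\ldf\gtrsim\udf$, so $\ldf\approx\udf$. Combined with the standing assumption $\udf[\XB,\XX]\approx\udf[\YB,\YY]$, this gives $\udf[\XB,\XX]\lesssim \ldf[\YB,\YY]$, and Lemma~\ref{lem:ShareBidem} delivers $\Vert \sum_{n\in A_-} a_n\,\xx_n\Vert\lesssim \Vert h\Vert$.

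Combining the two estimates via the modulus of concavity of $\XX$ yields $\Vert \sum_{n\in A} a_n\,\xx_n\Vert\lesssim \max\{\Vert g\Vert,\Vert h\Vert\}$, which is the desired inequality. The main subtlety is extracting $\ldf\approx\udf$ from bidemocracy in order to legitimately apply Lemma~\ref{lem:ShareBidem} across the two bases; once that short duality observation is in hand, the proof reduces to the two parallel truncation quasi-greedy estimates sketched above.
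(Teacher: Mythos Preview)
Your proof is correct and follows essentially the same route as the paper. The paper partitions $A$ into $D_1=\{n\in A:|a_n-b_n|\ge b\}$ and its complement (showing $|a_n+b_n|>b$ there via $|2b_n+(a_n-b_n)|\ge 2|b_n|-|a_n-b_n|$), then applies Lemma~\ref{lem:ShareBidem} to both pieces; you swap the roles of $+$ and $-$, justify the dichotomy by the parallelogram inequality $\max\{|z+w|,|z-w|\}\ge|w|$, and invoke Lemma~\ref{lem:truncation quasi-greedyQU} directly on $A_+$ and Lemma~\ref{lem:ShareBidem} on $A_-$, which is the same argument up to the observation that Lemma~\ref{lem:ShareBidem} with $\XB=\YB$ reduces to Lemma~\ref{lem:truncation quasi-greedyQU}. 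Your explicit verification that bidemocracy forces $\ldf\approx\udf$ (needed for the cross-basis application of Lemma~\ref{lem:ShareBidem}) is a point the paper leaves implicit.
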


\begin{claim}\label{claim2}
There is a constant $C$ such that
\[
\left\Vert \sum_{n\in A} a_n \, \xx_n\right\Vert \le C \left\Vert\left( \sum_{n=1}^\infty a_n \, \xx_n , \sum_{n=1}^\infty a_n \,\yy_n\right) \right\Vert
\]
whenever $\max_{n\in A} |a_n|\le b:=\min_{n\in A} \max\{|a_n+b_n|,|a_n-b_n|\}$.
\end{claim}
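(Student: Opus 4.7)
Write $f:=(\sum_n a_n\xx_n,\sum_n b_n\yy_n)$ and expand it in $\XB\diamond\YB$ as $f=\sum_k c_k\zz_k$ with $c_{2n-1}=(a_n+b_n)/\sqrt{2}$ and $c_{2n}=(a_n-b_n)/\sqrt{2}$, so that $\sqrt{2}\max\{|c_{2n-1}|,|c_{2n}|\}=\max\{|a_n+b_n|,|a_n-b_n|\}$. I read the right-hand norm as $\|(\sum_n a_n\xx_n,\sum_n b_n\yy_n)\|=\|f\|$, since the hypothesis depends on $b_n$ and the claim is invoked in the proof of Theorem~\ref{thm:QGC} to bound $\|S_{A_5}^{\XX}(g)\|$ with $g=\sum a_n\xx_n$ and $h=\sum b_n\yy_n$. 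The plan is to wedge the quantity $b\cdot s_{|A|}$, with $s_m:=\udf[\XB,\XX](m)$, between the two sides of the claimed inequality, using bidemocracy on both sides.

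The key reformulation of the hypothesis is that, for each $n\in A$, we may choose $m(n)\in\{2n-1,2n\}$ with $|c_{m(n)}|=\max\{|c_{2n-1}|,|c_{2n}|\}\ge b/\sqrt{2}$; the set $E:=\{m(n):n\in A\}$ then satisfies $|E|=|A|$ and $\min_{k\in E}|c_k|\ge b/\sqrt{2}$. The upper half of the sandwich is an immediate application of Lemma~\ref{constantupperdemfunct} to $\XB$:
\[
\Bigl\|\sum_{n\in A}a_n\xx_n\Bigr\|\lesssim \max_{n\in A}|a_n|\cdot\udf[\XB,\XX](|A|)\lesssim b\cdot s_{|A|}.
\]

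For the lower half, by Proposition~\ref{prop:bidem} the basis $\XB\diamond\YB$ is bidemocratic with $\udf[\XB\diamond\YB,\XX\oplus\YY](m)\approx s_m$, hence $\udf[(\XB\diamond\YB)^{\ast},(\XX\oplus\YY)^{\ast}](m)\lesssim m/s_m$. Setting $\varepsilon_k:=\overline{\sgn(c_k)}$ for $k\in E$ and $\phi^{\ast}:=\Ind^{\ast}_{\varepsilon,E}[\XB\diamond\YB,\XX\oplus\YY]=\sum_{k\in E}\varepsilon_k\,\zz_k^{\ast}$, we obtain $\|\phi^{\ast}\|\le \udf[(\XB\diamond\YB)^{\ast}](|E|)\lesssim|A|/s_{|A|}$ on one hand, and $\phi^{\ast}(f)=\sum_{k\in E}|c_k|\ge|A|\,b/\sqrt{2}$ on the other. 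Combining these gives $b\cdot s_{|A|}\lesssim\|f\|$, and chaining with the upper bound yields $\|\sum_{n\in A}a_n\xx_n\|\lesssim\|f\|$.

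The only substantive step is the reformulation producing the set $E$ and the choice of the dual functional $\phi^{\ast}$; once these are in place, each half of the sandwich is a direct application of a standard bidemocracy estimate. I foresee no real obstacle beyond tracking constants through the modulus of concavity of $\XX\oplus\YY$, the bidemocracy constants of $\XB$ and $\XB\diamond\YB$, and the implicit constant in $\udf[\XB,\XX]\approx\udf[\YB,\YY]\approx s_m$.
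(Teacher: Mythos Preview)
Your proof is correct, and your reading of the right-hand side as $\|(\sum_n a_n\xx_n,\sum_n b_n\yy_n)\|$ is exactly what the paper intends (its own proof terminates with this expression; the $a_n\yy_n$ in the displayed statement is a typo).

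Your route, however, differs from the paper's. The paper argues entirely at the level of $\XB$ and $\YB$ separately, using Lemma~\ref{lem:ShareBidem}: it splits $A$ into $D_1=\{n\in A: |a_n|\le|b_n|\}$ and $D_2=A\setminus D_1$, observes that $\max\{|a_n|,|b_n|\}\ge b/2$ on $A$ (since $\max\{|a_n+b_n|,|a_n-b_n|\}\le 2\max\{|a_n|,|b_n|\}$), and then applies Lemma~\ref{lem:ShareBidem} twice --- once from $\XB|_{D_1}$ to $\YB$ and once from $\XB|_{D_2}$ to $\XB$ --- to bound each piece directly by $\|\sum b_n\yy_n\|$ and $\|\sum a_n\xx_n\|$ respectively. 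No reference to the combined basis $\XB\diamond\YB$ or its bidemocracy is needed.

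Your argument instead passes through $\XB\diamond\YB$: you sandwich both sides against $b\cdot s_{|A|}$, getting the upper half from Lemma~\ref{constantupperdemfunct} and the lower half by testing $f$ against $\Ind^{\ast}_{\varepsilon,E}$ and invoking Proposition~\ref{prop:bidem} to control $\|\Ind^{\ast}_{\varepsilon,E}\|$. This is in effect a direct reprise of the ``bidemocratic $\Rightarrow$ truncation quasi-greedy'' mechanism, applied to $\XB\diamond\YB$. It is shorter and avoids the case split, at the cost of relying on Proposition~\ref{prop:bidem} (already proved, so no circularity). The paper's version is slightly more self-contained in that it uses only Lemma~\ref{lem:ShareBidem}, which depends just on the truncation quasi-greediness of $\XB$ and $\YB$ individually rather than on the bidemocracy of the rotated basis.
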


Let us prove Claim~\ref{claim1}. Set $D_1=\{n\in A \colon |a_n-b_n|\ge b\}$. If $n\in D_2:=A\setminus D_1$ then
\[
|a_n+b_n|=|2b_n+(a_n-b_n)| \ge 2 |b_n|-|a_n-b_n|> 2b-b=b.
\]
Hence, if $\kappa$ is the modulus of concavity of $\XX$, applying Lemma~\ref{lem:ShareBidem} we obtain
\begin{align*}
\left\Vert \sum_{n\in A} a_n \, \xx_n\right\Vert
&\le \kappa\left( \left\Vert \sum_{n\in D_1} a_n \, \xx_n\right\Vert +\left\Vert \sum_{n\in D_2} a_n \, \xx_n\right\Vert \right)\\
&\le \kappa C_0\left( \left\Vert \sum_{n=1}^\infty (a_n-b_n) \, \yy_n\right\Vert + \left\Vert \sum_{n=1}^\infty (a_n+b_n)\xx_n\right\Vert \right)\\
&\le 2 \kappa C_0 \max\left\{ \left\Vert \sum_{n=1}^\infty (a_n+b_n) \, \xx_n\right\Vert , \left\Vert \sum_{n=1}^\infty (a_n-b_n) \yy_n\right\Vert \right\}\\
&= 2 \kappa C_0\left\Vert \sum_{n=1}^\infty a_n \, (\xx_n,\yy_n) + \sum_{n=1}^\infty b_n \, (\xx_n,-\yy_n) \right\Vert.
\end{align*}

We conclude by proving Claim~\ref{claim2}. Set $D_1=\{n\in A \colon |a_n|\le |b_n| \}$ and $D_2=A\setminus D_1$. Since
\[
\max\{ |a_n|,|b_n|\}\ge \frac{b}{2}, \quad n\in A,
\]
we have $|b_n|\ge b/2$ for all $n\in D_1$ and $|a_n|\ge b/2$ for all $n\in D_2$. Therefore
\[
\max_{n\in D_1} |a_n|\le 2 \min_{n\in D_1} |b_n|, \quad
\max_{n\in D_2} |a_n|\le 2 \min_{n\in D_2} |a_n|.
\]
Applying Lemma~\ref{lem:ShareBidem} we obtain
\begin{align*}
\left\Vert \sum_{n\in A} a_n \, \xx_n\right\Vert
&\le \kappa \left(\left\Vert \sum_{n\in D_1} a_n \, \xx_n\right\Vert +\left\Vert \sum_{n\in D_2} a_n \, \xx_n\right\Vert \right)\nonumber\\
&\le \kappa C_0\left(\left\Vert \sum_{n =1}^\infty 2 b_n \, \yy_n\right\Vert +\left\Vert \sum_{n=1}^\infty 2 a_n \, \xx_n\right\Vert \right)\nonumber\\
&\le 4\kappa C_0 \left\Vert \left( \sum_{n =1}^\infty a_n \, \xx_n, \sum_{n=1}^\infty b_n \, \yy_n\right)\right\Vert.\qedhere
\end{align*}
\end{proof}

If $\varphi$ is the fundamental function of a basis of a Banach space, then $(\varphi(m))_{m=1}^\infty$ and $(m/\varphi(m))_{m=1}^\infty$ are non-decreasing sequences (see \cite{DKKT2003}). Our next result says that any such $\varphi$ corresponds in fact to a bidemocratic basis of a Banach space.

\begin{theorem}\label{thm:BDQGAFF}
Let $(s_m)_{m=1}^\infty$ be a non-decreasing unbounded sequence of positive scalars. Suppose that $(m/s_m)_{m=1}^\infty$ is unbounded and non-decreasing. Then there is a Banach space $\XX$ and a conditional bidemocratic quasi-greedy basis $\XB$ of $\XX$ whose fundamental function grows as $(s_m)_{m=1}^\infty$.
\end{theorem}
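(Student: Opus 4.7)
My plan is to apply the rotation construction $\XB\diamond\YB$ developed in this section to two well-chosen bases with equivalent fundamental functions. The target Banach space will be $\XX = \XX_1 \oplus \XX_2$ where $\XX_1$ and $\XX_2$ are two non-isomorphic Lorentz sequence spaces sharing the same underlying weight. The rotation method already gives bidemocracy (Proposition~\ref{prop:bidem}), conditionality (Proposition~\ref{prop:diamondConditional}), and preservation of quasi-greediness (Theorem~\ref{thm:QGC}); the whole task is therefore to supply two inputs $\XB$ and $\YB$ that are (a) unconditional, hence quasi-greedy, (b) bidemocratic with fundamental function equivalent to $(s_m)_{m=1}^\infty$, and (c) non-equivalent as bases.

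Concretely, I would first associate to the given sequence $(s_m)_{m=1}^\infty$ the weight $\ww=(w_n)_{n=1}^\infty$ defined by $w_1 = s_1$ and $w_n = s_n - s_{n-1}$ for $n\ge 2$, so that its primitive weight is exactly $(s_m)_{m=1}^\infty$. Since $(s_m/m)_{m=1}^\infty$ is non-increasing, $s_{2m}/(2m)\le s_m/m$ yields $s_{2m}\le 2 s_m$, so the primitive weight is doubling; together with the assumed unboundedness this makes the Lorentz spaces $d_{1,q}(\ww)$ well defined quasi-Banach spaces with the canonical unit vector system as a $1$-symmetric, $1$-unconditional basis of fundamental function equivalent to $(s_m)_{m=1}^\infty$ (as recalled right after Lemma~\ref{lem:AlsoDiverges}). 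Choose any $1\le q_1 < q_2 < \infty$ so that $\XX_1 = d_{1,q_1}(\ww)$ and $\XX_2 = d_{1,q_2}(\ww)$ are Banach spaces, and let $\XB$ and $\YB$ be their respective unit vector bases.

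For property (b) I use that for a symmetric basis on a Lorentz sequence space, the dual basis is the canonical basis of the associated Marcinkiewicz-type space, whose fundamental function is comparable to $(m/s_m)_{m=1}^\infty$; since $\varphi(m)\varphi^{\ast}(m)\ge m$ always holds, bidemocracy follows. Property (a) is clear since the unit vector systems of $d_{1,q_i}(\ww)$ are $1$-unconditional. For property (c), we know that $d_{1,q_1}(\ww) \subsetneq d_{1,q_2}(\ww)$: indeed, by the computation in \eqref{eq:NormLorentz} combined with Lemma~\ref{lem:AlsoDiverges}, the sequence $(s_n^{-1})_{n=1}^\infty$ belongs to $d_{1,q_2}(\ww) \setminus d_{1,q_1}(\ww)$, which forces $\XB$ and $\YB$ to be inequivalent as bases.

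With these inputs in hand, Proposition~\ref{prop:bidem} says that $\XB\diamond\YB$ is a bidemocratic basis of $\XX_1\oplus \XX_2$ with $\udf[\XB\diamond\YB,\XX_1\oplus\XX_2](m)\approx s_m$, Theorem~\ref{thm:QGC} gives quasi-greediness (the maximum of two bounded sequences of quasi-greedy parameters is bounded), and Proposition~\ref{prop:diamondConditional} ensures that the basis is conditional, since $\XB$ and $\YB$ are not equivalent. The main obstacle I foresee is strictly bookkeeping: making sure the weight $\ww$ constructed from the hypotheses yields Lorentz spaces that are genuine Banach spaces with the symmetric-basis properties invoked above, and verifying that the canonical basis of $d_{1,q}(\ww)$ is indeed bidemocratic for the full range of admissible $(s_m)$. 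No new analytic input is needed beyond what this section has already assembled.
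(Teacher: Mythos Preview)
Your plan matches the paper's exactly at the level of the rotation machinery (Propositions~\ref{prop:bidem} and~\ref{prop:diamondConditional}, Theorem~\ref{thm:QGC}), but the step you flag as ``strictly bookkeeping'' is in fact the crux, and it does not go through as stated. For $q>1$ the space $d_{1,q}(\ww)$ is a Banach space if and only if the weight $\tilde w_n=s_n^{q-1}w_n$ satisfies the Ari\~no--Muckenhoupt $B_q$ condition; after a summation by parts this amounts to
\[
\sup_{m\in\NN}\,\sum_{n>m}\frac{1}{n}\left(\frac{s_n/n}{s_m/m}\right)^{q}<\infty.
\]
The hypotheses of Theorem~\ref{thm:BDQGAFF} only give $s_n/n\le s_m/m$ for $n\ge m$, which bounds each summand by $1/n$ but says nothing about decay. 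If $(s_m)$ contains arbitrarily long stretches on which $s_m/m$ is constant---for instance, alternate intervals on which $s_m=m/k$ with intervals on which $s_m$ is constant, choosing the linear stretches so that $\log(b_k/a_k)\to\infty$---then the supremum above is infinite for every $q>1$, and no $d_{1,q}(\ww)$ with $q>1$ is locally convex. Such sequences satisfy all the hypotheses of the theorem, so your pair $(d_{1,q_1}(\ww),d_{1,q_2}(\ww))$ cannot be chosen inside the category of Banach spaces in general. This is precisely why the Remark following the proof treats the URP case separately and why the paper does \emph{not} use two Lorentz spaces.

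The paper's fix is to keep $\XX_1=d_{1,1}(\ww)$ (always a Banach space) but replace the second Lorentz space by the separable Marcinkiewicz space $m_0(\ww^{\ast})$, whose sup-type functional is an honest norm for every admissible weight. The chain~\eqref{eq:embeddingsM} shows its unit vector basis has fundamental function $(s_m)$, and non-equivalence is obtained structurally (boundedly complete basis on one side, a complemented $\ell_1$ in the dual on the other). Two minor corrections to your write-up: the dual of $d_{1,q}(\ww)$ is a Marcinkiewicz space only when $q=1$; and your non-equivalence witness is misstated, since by~\eqref{eq:NormLorentz} and Lemma~\ref{lem:AlsoDiverges} the sequence $(s_n^{-1})_{n=1}^\infty$ lies in \emph{no} $d_{1,q}(\ww)$ with $q<\infty$---what you actually want is that the ratio of the partial-sum norms, $(H_m[\ww])^{1/q_1-1/q_2}$, is unbounded.
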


\begin{proof}
Let $\ww=(w_n)_{n=1}^\infty$ denote the weight whose primitive weight is $(s_m)_{m=1}^\infty$. Then $d_{1,1}(\ww)$ is a Banach space whose dual space is the Marcinkiewicz space $m(\ww)$, consisting of all $f\in c_0$ whose non-increasing rearrangement $(a_n)_{n=1}^\infty$ satisfies
\[
\Vert f\Vert_{m(\ww)}=\sup_m \frac{1}{s_m}\sum_{n=1}^m a_n<\infty
\]
(see \cite{CRS2007}*{Theorems 2.4.14 and 2.5.10}). Let $m_0(\ww)$ be the separable part of $m(\ww)$, and let
$\ww^{\ast}$ be the weight whose primitive weight is $(m/s_m)_{m=1}^\infty$.
We have the following chain of norm-one inclusions:
\begin{equation}\label{eq:embeddingsM}
d_{1,1}(\ww) \subseteq m_0(\ww^{\ast}) \subseteq m(\ww^{\ast}) \subseteq d_{1,\infty}(\ww).
\end{equation}
The right hand-side inclusion is clear. Let us prove the left hand-side inclusion. Let $(a_n)_{n=1}^\infty$ be the non-increasing rearrangement of $f\in c_0$. Given $m\in\NN$ we define $(b_n)_{n=1}^\infty$ by $b_n=a_n$ is $n\le m$ and $b_n=0$ otherwise. Using Abel's summation formula we obtain
\begin{align*}
\frac{s_m}{m}\sum_{n=1}^m a_n
&= \frac{s_m}{m} \sum_{n=1}^\infty (b_n-b_{n+1})n \\
&\le \sum_{n=1}^\infty (b_n-b_{n+1})s_n\\
&=\sum_{n=1}^m a_n w_n\le \Vert f\Vert_{1,\ww}.
\end{align*}

We infer from \eqref{eq:embeddingsM} that $d_{1,1}(\ww)$ and $m_0(\ww^{\ast})$ are Banach spaces for which the unit vector system is a symmetric basis with fundamental function $(s_m)_{m=1}^\infty$. Applying the rotation method with these bases yields a bidemocratic quasi-greedy basis of $d_{1,1}(\ww)\oplus m_0(\ww^{\ast})$ with fundamental function equivalent to $(s_m)_{m=1}^\infty$.

To show that this basis is conditional, by Proposition~\ref{prop:diamondConditional} it suffices to show that $d_{1,1}(\ww)$ and $m_0(\ww^{\ast})$ are not isomorphic, so that $d_{1,1}(\ww)\subsetneq m_0(\ww^{\ast})$. For that, we note that the unit vector system is a boundedly complete basis of $d_{1,1}(\ww)$ and that $\ell_1$ is a complemented subspace of $d_{1,1}(\ww^{\ast})$. Indeed, the first fact is clear. The second fact follows taking into account that the proof in \cite{ACL1973} works even without imposing to $\ww^*$ to be non-increasing. An appeal to \cite{AlbiacKalton2016}*{Theorem 3.2.15} and \cite{AlbiacKalton2016}*{Theorem 3.3.1} concludes the proof.
\end{proof}

\begin{remark}
Notice that in Theorem~\ref{thm:BDQGAFF} we can obtain that $\XB$ is $1$-bidemocratic with $\udf[\XB,\XX](m)=s_m$ for all $m\in\NN$. Indeed, if $(s_m)_{m=1}^\infty$ is a non-decreasing sequence of positive scalars such that $(m/s_m)_{m=1}^\infty$ is non-decreasing, and $\XX$ is a $p$-Banach space, $0<p\le 1$, with a bidemocratic basis $\XB$ such that $\udf[\XB,\XX](m)\approx s_m$ for $m\in\NN$, then, arguing as in the proof of \cite{DOSZ2011}*{Theorem 2.1} (where unconditionality plays no role), we obtain an equivalent $p$-norm for $\XX$ with respect to which $\udf[\XB,\XX](m)=s_m$ and $\udf[\XB^{\ast},\XX^{\ast}](m)=m/s_m$ for all $m\in\NN$.
\end{remark}

\begin{remark}
In the case when $(s_m)_{m=1}^\infty$ has the URP we can give a more quantitative approach to the proof of Theorem~\ref{thm:BDQGAFF}. In this particular case we have $m(\ww^{\ast})=d_{1,\infty}(\ww)$. Moreover, by \cite{CRS2007}*{Theorem 2.5.10}, $d_{1,q}(\ww)$ is a Banach space for every $1<q<\infty$. Notice that, in general, $d_{1,q}(\ww)$ is $r$-Banach for all $r<1$ and $1<q\le \infty$, and that $d_{1,q}(\ww)$ is $q$-Banach for all $0<q<1$. Applying the rotation method with the unit vector systems of $d_{1,p}(\ww)$ and $d_{1,q}(\ww)$, $0<p<q\le \infty$, yields a bidemocratic quasi-greedy basis (of a quasi-Banach space which is locally convex if $p\ge 1$) whose fundamental function is equivalent to $(s_m)_{m=1}^\infty$. Combining \eqref{eq:NormLorentz} with Proposition~\ref{prop:diamondConditional} gives that the conditionality constants $(\unc_m)_{m=1}^\infty$ of the basis we obtain satisfy
\[
\unc_m\gtrsim (H_m[\ww])^{1/p-1/q}, \quad m\in\NN.
\]
In the particular case that $(s_m)_{m=1}^\infty$ has the LRP, by Lemma~\ref{lem:LorentzLRP},
\[
\unc_m\gtrsim (\log m)^{1/p-1/q}, \quad m\in\NN,\; m\ge 2.
\]
Notice that, if $1<p<q<\infty$ and $(m/s_m^q)_{m=1}^\infty$ is non-decreasing, $\XX$ is superreflexive (see \cite{Altshuler1975}). In particular, we find a bidemocratic quasi-greedy basis of a Banach space with $\unc_m\gtrsim \log m$ for $m\ge 2$; and, for each $0<s<1$, a bidemocratic quasi-greedy basis of a superreflexive Banach space with $\unc_m\gtrsim (\log m)^s$ for $m\ge 2$. Thus, the rotation method serves to built `highly conditional' almost greedy bases (see \cite{AADK2019b} for background on this topic).
\end{remark}

\begin{example}
Let $\XX$ be a Banach space with a greedy, non-symmetric basis $\XB$ whose dual basis is also greedy. Then, if $\XB_\pi$ is a permutation of $\XB$ nonequivalent to $\XB$, we have that $\XB\diamond\XB_\pi$ is a conditional quasi-greedy basis of $\XX\oplus\XX$. For instance, in light of \cite{Temlyakov1998}*{Theorem 2.1}, this technique can be applied to the $L_p$-normalized Haar system to obtain a bidemocratic conditional quasi-greedy basis of $L_p([0,1])$, $p\in(1,2)\cup(2,\infty)$. Also, since, for the same values of $p$, the space $\ell_p$ has a greedy basis which is non-equivalent to the canonical basis (see \cite{DHK2006}*{Theorem 2.1}), this technique yields a bidemocratic conditional basis of $\ell_p$.
\end{example}


\begin{bibdiv}
\begin{biblist}

\bib{AABW2021}{article}{
author={Albiac, Fernando},
author={Ansorena, Jos\'{e}~L.},
author={Bern\'{a}, Pablo~M.},
author={Wojtaszczyk, Przemys{\l}aw},
title={Greedy approximation for biorthogonal systems in quasi-Banach spaces},
date={2021},
journal={Dissertationes Math. (Rozprawy Mat.)},
volume={560},
pages={1\ndash 88},
}

\bib{AADK2019b}{article}{
author={Albiac, Fernando},
author={Ansorena, Jos\'{e}~L.},
author={Dilworth, Stephen~J.},
author={Kutzarova, Denka},
title={Building highly conditional almost greedy and quasi-greedy bases in {B}anach spaces},
date={2019},
ISSN={0022-1236},
journal={J. Funct. Anal.},
volume={276},
number={6},
pages={1893\ndash 1924},
url={https://doi-org/10.1016/j.jfa.2018.08.015},
review={\MR{3912795}},
}

\bib{AAW2019}{article}{
author={Albiac, Fernando},
author={Ansorena, Jos\'{e}~L.},
author={Wojtaszczyk, Przemys{\l}aw},
title={Conditional quasi-greedy bases in non-superreflexive {B}anach spaces},
date={2019},
ISSN={0176-4276},
journal={Constr. Approx.},
volume={49},
number={1},
pages={103\ndash 122},
url={https://doi-org/10.1007/s00365-017-9399-x},
review={\MR{3895765}},
}

\bib{AAW2021b}{article}{
author={Albiac, Fernando},
author={Ansorena, Jos\'{e}~L.},
author={Wojtaszczyk, Przemys{\l}aw},
title={On certain subspaces of {$\ell_p$} for {$0<p\leq1$} and their applications to conditional quasi-greedy bases in {$p$}-{B}anach spaces},
date={2021},
ISSN={0025-5831},
journal={Math. Ann.},
volume={379},
number={1-2},
pages={465\ndash 502},
url={https://doi-org/10.1007/s00208-020-02069-3},
review={\MR{4211094}},
}

\bib{AAW2021}{article}{
author={Albiac, Fernando},
author={Ansorena, Jos\'{e}~L.},
author={Wojtaszczyk, Przemys{\l}aw},
title={Quasi-greedy bases in {$\ell_ p$} {$(0<p<1)$} are democratic},
date={2021},
ISSN={0022-1236},
journal={J. Funct. Anal.},
volume={280},
number={7},
pages={108871, 21},
url={https://doi-org/10.1016/j.jfa.2020.108871},
review={\MR{4211033}},
}

\bib{AlbiacKalton2016}{book}{
author={Albiac, Fernando},
author={Kalton, Nigel~J.},
title={Topics in {B}anach space theory},
edition={Second},
series={Graduate Texts in Mathematics},
publisher={Springer, [Cham]},
date={2016},
volume={233},
ISBN={978-3-319-31555-3; 978-3-319-31557-7},
url={https://doi.org/10.1007/978-3-319-31557-7},
note={With a foreword by Gilles Godefory},
review={\MR{3526021}},
}

\bib{Altshuler1975}{article}{
author={Altshuler, Zvi},
title={Uniform convexity in {L}orentz sequence spaces},
date={1975},
ISSN={0021-2172},
journal={Israel J. Math.},
volume={20},
number={3-4},
pages={260\ndash 274},
url={https://doi.org/10.1007/BF02760331},
review={\MR{385517}},
}

\bib{ACL1973}{article}{
author={Altshuler, Zvi},
author={Casazza, Peter~G.},
author={Lin, Bor~Luh},
title={On symmetric basic sequences in {L}orentz sequence spaces},
date={1973},
ISSN={0021-2172},
journal={Israel J. Math.},
volume={15},
pages={140\ndash 155},
url={https://doi-org/10.1007/BF02764600},
review={\MR{328553}},
}

\bib{BL2020}{article}{
author={Berasategui, Miguel},
author={Lassalle, Silvia},
title={Weak semi-greedy bases and the equivalence between semi-greedy, branch semi-greedy and almost greedy {M}arkushevich bases in {B}anach spaces},
date={2020},
journal={arXiv e-prints},
eprint={2004.06849},
}

\bib{Berna2020}{article}{
author={Bern\'{a}, Pablo~M.},
title={Characterization of weight-semi-greedy bases},
date={2020},
ISSN={1069-5869},
journal={J. Fourier Anal. Appl.},
volume={26},
number={1},
pages={Paper No. 21, 21},
url={https://doi-org/10.1007/s00041-020-09727-9},
review={\MR{4056847}},
}

\bib{BBG2017}{article}{
author={Bern\'{a}, Pablo~M.},
author={Blasco, \'{O}scar},
author={Garrig\'{o}s, Gustavo},
title={Lebesgue inequalities for the greedy algorithm in general bases},
date={2017},
ISSN={1139-1138},
journal={Rev. Mat. Complut.},
volume={30},
number={2},
pages={369\ndash 392},
url={https://doi.org/10.1007/s13163-017-0221-x},
review={\MR{3642039}},
}

\bib{BBGHO2018}{article}{
author={Bern\'{a}, Pablo~M.},
author={Blasco, Oscar},
author={Garrig\'{o}s, Gustavo},
author={Hern\'{a}ndez, Eugenio},
author={Oikhberg, Timur},
title={Embeddings and {L}ebesgue-type inequalities for the greedy algorithm in {B}anach spaces},
date={2018},
ISSN={0176-4276},
journal={Constr. Approx.},
volume={48},
number={3},
pages={415\ndash 451},
url={https://doi.org/10.1007/s00365-018-9415-9},
review={\MR{3869447}},
}

\bib{CRS2007}{article}{
author={Carro, Mar\'{\i}a~J.},
author={Raposo, Jos\'{e}~A.},
author={Soria, Javier},
title={Recent developments in the theory of {L}orentz spaces and weighted inequalities},
date={2007},
ISSN={0065-9266},
journal={Mem. Amer. Math. Soc.},
volume={187},
number={877},
pages={xii+128},
url={https://doi-org/10.1090/memo/0877},
review={\MR{2308059}},
}

\bib{DHK2006}{article}{
author={Dilworth, Stephen~J.},
author={Hoffmann, Mark},
author={Kutzarova, Denka},
title={Non-equivalent greedy and almost greedy bases in {$l_p$}},
date={2006},
ISSN={0972-6802},
journal={J. Funct. Spaces Appl.},
volume={4},
number={1},
pages={25\ndash 42},
url={https://doi-org/10.1155/2006/368648},
review={\MR{2194634}},
}

\bib{DKK2003}{article}{
author={Dilworth, Stephen~J.},
author={Kalton, Nigel~J.},
author={Kutzarova, Denka},
title={On the existence of almost greedy bases in {B}anach spaces},
date={2003},
ISSN={0039-3223},
journal={Studia Math.},
volume={159},
number={1},
pages={67\ndash 101},
url={https://doi.org/10.4064/sm159-1-4},
note={Dedicated to Professor Aleksander Pe{\l}czy\'nski on the occasion
of his 70th birthday},
review={\MR{2030904}},
}

\bib{DKKT2003}{article}{
author={Dilworth, Stephen~J.},
author={Kalton, Nigel~J.},
author={Kutzarova, Denka},
author={Temlyakov, Vladimir~N.},
title={The thresholding greedy algorithm, greedy bases, and duality},
date={2003},
ISSN={0176-4276},
journal={Constr. Approx.},
volume={19},
number={4},
pages={575\ndash 597},
url={https://doi-org/10.1007/s00365-002-0525-y},
review={\MR{1998906}},
}

\bib{DOSZ2011}{article}{
author={Dilworth, Stephen~J.},
author={Odell, Edward~W.},
author={Schlumprecht, Thomas},
author={Zs\'{a}k, Andr\'{a}s},
title={Renormings and symmetry properties of $1$-greedy bases},
date={2011},
ISSN={0021-9045},
journal={J. Approx. Theory},
volume={163},
number={9},
pages={1049\ndash 1075},
url={https://doi.org/10.1016/j.jat.2011.02.013},
review={\MR{2832742}},
}

\bib{GHO2013}{article}{
author={Garrig\'os, Gustavo},
author={Hern\'{a}ndez, Eugenio},
author={Oikhberg, Timur},
title={Lebesgue-type inequalities for quasi-greedy bases},
date={2013},
ISSN={0176-4276},
journal={Constr. Approx.},
volume={38},
number={3},
pages={447\ndash 470},
url={https://doi-org/10.1007/s00365-013-9209-z},
review={\MR{3122278}},
}

\bib{KoTe1999}{article}{
author={Konyagin, Sergei~V.},
author={Temlyakov, Vladimir~N.},
title={A remark on greedy approximation in {B}anach spaces},
date={1999},
ISSN={1310-6236},
journal={East J. Approx.},
volume={5},
number={3},
pages={365\ndash 379},
review={\MR{1716087}},
}

\bib{LinPel1968}{article}{
author={Lindenstrauss, Joram},
author={Pe{\l}czy\'{n}ski, Aleksander},
title={Absolutely summing operators in {$L_{p}$}-spaces and their applications},
date={1968},
ISSN={0039-3223},
journal={Studia Math.},
volume={29},
pages={275\ndash 326},
url={https://doi-org/10.4064/sm-29-3-275-326},
review={\MR{0231188}},
}

\bib{Oikhberg2018}{article}{
author={Oikhberg, Timur},
title={Greedy algorithm with gaps},
date={2018},
ISSN={0021-9045},
journal={J. Approx. Theory},
volume={225},
pages={176\ndash 190},
url={https://doi-org/10.1016/j.jat.2017.10.006},
review={\MR{3733255}},
}

\bib{Oswald2001}{article}{
author={Oswald, Peter},
title={Greedy algorithms and best {$m$}-term approximation with respect to biorthogonal systems},
date={2001},
ISSN={1069-5869},
journal={J. Fourier Anal. Appl.},
volume={7},
number={4},
pages={325\ndash 341},
url={https://doi.org/10.1007/BF02514500},
review={\MR{1836816}},
}

\bib{Rudin1976}{book}{
author={Rudin, Walter},
title={Principles of mathematical analysis},
edition={Third},
publisher={McGraw-Hill Book Co., New York-Auckland-D\"{u}sseldorf},
date={1976},
note={International Series in Pure and Applied Mathematics},
review={\MR{0385023}},
}

\bib{Temlyakov1998}{article}{
author={Temlyakov, Vladimir~N.},
title={The best {$m$}-term approximation and greedy algorithms},
date={1998},
ISSN={1019-7168},
journal={Adv. Comput. Math.},
volume={8},
number={3},
pages={249\ndash 265},
url={https://doi.org/10.1023/A:1018900431309},
review={\MR{1628182}},
}

\bib{Woj1982}{article}{
author={Wojtaszczyk, Przemys{\l}aw},
title={The {F}ranklin system is an unconditional basis in {$H_{1}$}},
date={1982},
ISSN={0004-2080},
journal={Ark. Mat.},
volume={20},
number={2},
pages={293\ndash 300},
url={https://doi.org/10.1007/BF02390514},
review={\MR{686177}},
}

\bib{Woj2000}{article}{
author={Wojtaszczyk, Przemys{\l}aw},
title={Greedy algorithm for general biorthogonal systems},
date={2000},
ISSN={0021-9045},
journal={J. Approx. Theory},
volume={107},
number={2},
pages={293\ndash 314},
url={https://doi-org/10.1006/jath.2000.3512},
review={\MR{1806955}},
}

\end{biblist}
\end{bibdiv}
\end{document}